\newtheorem{theorem}{Theorem}[section]
\newtheorem{lemma}[theorem]{Lemma}
\newtheorem{proposition}{Proposition}
\theoremstyle{definition}
\newtheorem{remark}{Remark}
\let\bs\boldsymbol
\def\C{\mathscr{C}}
\def\K{\mathbb{K}}
\def\RR{\mathbb{R}}
\newcommand{\Div}{\nabla\!\cdot\!}
\newcommand{\Rot}{\nabla\!\times\!}
\newcommand{\funcao}[3]{\mbox{$#1 : #2 \longrightarrow #3$}}
\newcommand{\lraup}{\relbar\joinrel\relbar\joinrel\rightharpoonup}
\newcommand{\lra}{\relbar\joinrel\relbar\joinrel\rightarrow}
\newcommand{\fim}{\hfill{$\square$}}
\newcommand{\Tende}[3]{\ensuremath{\raisebox{-3pt}{$\begin{CD}{#1}@>>{#2}>{#3}\end{CD}$}}}
\newcommand{\norm}[1]{\left\Vert#1\right\Vert}
\newcommand{\W}[2]{{\mathbb W}^{#1}(#2)}
\newcommand{\Wtilde}[2]{{\widetilde{\mathbb W}}^{#1}(#2)}
\title[On a p-curl system arising in electromagnetism]%
{On a p-curl system arising in electromagnetism}
\author[Fernando Miranda, Jos\'e-Francisco Rodrigues and Lisa Santos]{}
\subjclass[2000]{Primary: 35K87, 78M30; Secondary: 49J40.}
 \keywords{Electromagnetic problems; variational methods; variational inequalities; superconductivity models.}
\thanks{The first an last authors are supported by the Research Centre of Mathematics of the
University of Minho through the FCT Pluriannual Funding Program and
FCT project \mbox{UT-Austin/MAT/0035/2008}}
\begin{document}
\maketitle

\centerline{\scshape Fernando Miranda }
\medskip
{\footnotesize
 \centerline{Department of Mathematics/CMAT, University of Minho}
 \centerline{Campus de Gualtar, 4710-057 Braga, Portugal}
 \centerline{fmiranda@math.uminho.pt}
}

\medskip

\centerline{\scshape Jos\'e-Francisco Rodrigues}
\medskip
{\footnotesize
 \centerline{CMAF/FCUL, University of Lisbon}
 \centerline{Av.\ Prof.\ Gama Pinto, 2, 1649-003 Lisboa, Portugal}
 \centerline{rodrigue@fc.ul.pt}
}

\medskip

\centerline{\scshape Lisa Santos}
\medskip
{\footnotesize
 \centerline{Department of Mathematics/CMAT, University of Minho}
 \centerline{Campus de Gualtar, 4710-057 Braga, Portugal}
 \centerline{lisa@math.uminho.pt}
}

\bigskip

\begin{abstract}
We prove existence of solution of a $p$-curl type evolutionary system arising in
electromagnetism with a power nonlinearity of order $p$, $1<p<\infty$, assuming
natural tangential boundary conditions.
We consider also the asymptotic  behaviour in the
power obtaining, when $p$ tends to infinity, a variational inequality with a
curl constraint.
We also discuss the existence, uniqueness and continuous dependence on the data of the solutions to
general variational inequalities with curl constraints dependent on time, as
well as the asymptotic stabilization in time towards the stationary solution
with and without constraint.
\end{abstract}

\section{Introduction}

We consider a nonlinear electromagnetic field  in a bounded domain $\Omega$
of $\RR^3$.
The electric and the magnetic fields, respectively $\bs e=\bs e(x,t)$ and $\bs h=\bs h(x,t)$, and the electric and magnetic inductions, respectively
$\bs d(x,t)$ and $\bs b=\bs b(x,t)$,
satisfy the  Maxwell's equations ($\partial_t=\frac{\partial\ }{\partial t}$, $\Rot{}=\mbox{curl}$, $\Div{}=\mbox{div}$)
\begin{eqnarray}\label{MaxwellEquations1}
\nonumber \partial_t \bs d+\bs j &= &\Rot\bs h,\\
 \partial_t\bs b+\Rot\bs e&=&\bs f,\\
\nonumber \Div\bs d&=&q,\\
\nonumber\Div\bs b&=&0
\end{eqnarray}
where $\bs j$ denotes the total current density, $q$ is the electric charge and $\bs f$, which is zero in the classical setting, is here a given internal magnetic current (see \cite{Bossavit1998,LandauLifshitz1960}).
Denoting  by $\mu$ the magnetic permeability constant, we assume the following constitutive law
\begin{equation*}
 \bs b=\mu\bs h
\end{equation*}
and the following nonlinear extension of Ohm's law,
\begin{equation*}
|\bs j|^{p-2}\bs j=\sigma\bs e,
\end{equation*}
where $\sigma$ is the electric conductivity.

If in the first equation of (\ref{MaxwellEquations1}) we neglect the term $\partial_t\bs d$, the magnetic field $\bs h$ is then divergence free and
$$\mu\,\partial_t\bs h+\Rot\big(\tfrac1\sigma|\Rot\bs h|^{p-2}\Rot\bs h\big)=\bs f.$$

Denoting  $\Gamma=\partial\Omega$ and $\Sigma_T=\Gamma\times(0,T)$, we impose the following natural tangential boundary conditions
\begin{equation*}
 \bs h\cdot \bs n=0\quad\text{and}\quad \bs e\times\bs n=\bs g\quad
 \text{on}\ \Sigma_T,
\end{equation*}
where $\bs n$ denotes the external unitary normal vector to the boundary $\Gamma$.
The boundary condition $\bs h\cdot \bs n=0$ is naturally associated with $\nabla\cdot\bs h=0$ in $Q_T=\Omega\times(0,T)$ and $\bs e\times\bs n=\bs g$ corresponds
to consider a superconductive wall, i.e., a tangent current field.

Recalling the relation between $\bs e$ and $\bs h$, if we set $\nu=\frac1\sigma>0$, we are lead to the  problem
\begin{subequations}\label{pforte}
 \begin{alignat}{3}
 \Div\bs h=0&\quad \text{and }&\quad\mu\,\partial_t\bs h+\Rot(\nu| \Rot\bs h|^{p-2}\Rot\bs h)=\bs f&\qquad \text{in }Q_T,\\
 \bs h\cdot \bs n=0&\quad\text{and}&\quad \nu|\Rot\bs h|^{p-2}(\Rot\bs h )\times\bs n=\bs g&\qquad \text{on }\Sigma_T,\\
 && \bs h(0)=\bs h_0&\qquad \text{in }\Omega.
  \end{alignat}
\end{subequations}

As a necessary condition for the existence of solution of this problem, the external field $\bs f$ must
satisfy $\Div \bs f=0$. Besides, the given field $\bs g$ on $\Sigma_T$ must be tangential
and compatible with $\bs f$, more precisely,
$
 \nabla_\Gamma\cdot\bs g=\bs f\cdot\bs n\quad\text{on }\Gamma,
$
where $\nabla_\Gamma\,\cdot$ denotes the surface divergent (see \cite{MitreaMitreaPipher1997,Mitrea2002,MirandaRodriguesSantos2009}).

We may also consider another constitutive law that arises in type-II superconductors and is known as an
extension of the Bean critical-state model presented in \cite{Prigozhin1996}. In this case the current density cannot
exceed the critical value $\Psi>0$ and we have
\begin{equation*}
\bs e=\begin{cases}
       \nu|\Rot\bs h|^{p-2}\Rot\bs h&\mbox{ if }|\Rot\bs h|<\Psi(x,t),\\ \\
       \big(\nu\,\Psi^{p-2}+\lambda\big)\Rot\bs h&\mbox{ if }|\Rot\bs h|=\Psi(x,t),
      \end{cases}
\end{equation*}
where the parameters $\nu=\nu(x)\geq0$ is a  given function
and $\lambda=\lambda(x,t)\geq0$ can be regarded as a (unknown) Lagrange multiplier.

Some easy calculations (see \cite{Prigozhin1996,MirandaRodriguesSantos2009} for details) leads to the variational inequality, for a.e. $t\in\,(0,T)$,
\begin{multline}\label{iv1}
\int_\Omega \partial_t\bs h(t)\cdot(\bs v-\bs h(t))+\int_\Omega\nu|\Rot\bs h(t)|^{p-2}\Rot\bs h(t)\cdot \Rot(\bs v-\bs h(t))\\
\geq
 \int_\Omega\bs f(t)\cdot (\bs v-\bs h(t))+\int_\Gamma \bs g(t)\cdot (\bs v-\bs h(t)),
\end{multline}
for any test function $\bs v=\bs v(x)$ such that $|\Rot\bs v(x)|\leq \Psi(x,t)$.
This leads to search the solution in the time dependent convex set
\begin{equation}\label{K1}
\K(t)=\{\bs v =\bs v(x): |\Rot\bs v(x)|\leq\Psi(x,t),\ x\in\Omega\}\quad\text{for a.e.}\ t\in(0,T).
\end{equation}

In Section~\ref{sec2} we study the evolutionary problem (\ref{pforte}), showing the existence of a unique solution in the variational framework of quasilinear monotone operators in the appropriate functional subspace of $ W^{1,p}(\Omega)^3$.
We notice that in the case of normal boundary condition ($\bs h\times\bs n=\bs 0$ on $\Sigma_T$) existence results for similar
nonlinear Maxwell's system have been obtained in \cite{Yin2004-2,YinLiBenZou2002}.
But these results with tangential boundary condition ($\bs h\cdot\bs n= 0$ on $\Sigma_T$) are presented here for the first time.
We also prove the asymptotic convergence, as $t\rightarrow\infty$ to the stationary solution of the problem already considered in \cite{MirandaRodriguesSantos2009}.

In Section~\ref{sec3} we derive the Bean-type superconductivity variational inequality model with critical value $\Psi=1$
as the limit case $p\rightarrow\infty$, extending a previous scalar case by \cite{BermudezMunoz-SolaPena2005} and a vectorial case
with normal boundary condition due to \cite{YinLiBenZou2002}.

Finally, in Section~\ref{sec4}, we solve the evolutionary variational inequality (\ref{iv1}) with the time dependent convex set (\ref{K1}),
showing the existence, uniqueness and continuous dependence on the data $\bs f$, $\bs g$, $\bs h_0$ and $\Psi$ of the solution, in the
appropriate setting.
We also discuss the asymptotic convergence of the solution in $\bs L^2(\Omega)$, as $t\rightarrow\infty$, towards the corresponding stationary
solution obtained in \cite{MirandaRodriguesSantos2009}, for $p\geq\frac65$.

\section{The variational equation}\label{sec2}

In what follows $\Omega$ is a bounded, simply connected domain of $\RR^3$ with a $\C^{1,1}$ boundary $\Gamma$. If $E$ denotes a vectorial space, we
denote by $\bs E$ the space $E^3$.

\subsection{The functional framework}

We introduce the functional space
 \begin{equation*}
\W{p}{\Omega}=\big\{\bs v\in\bs W^{1,p}(\Omega):\Div\bs v=0,\ \bs v\cdot\bs n_{|_\Gamma}=0\big\},
 \end{equation*}
$1\le p\le\infty$, which is a closed subspace of the Sobolev space $\bs W^{1,p}(\Omega)$.

\begin{proposition}
For $1< p<\infty$, $\W{p}{\Omega}$ is a reflexive Banach space where the semi-norm $\norm{\,\Rot\bs\cdot\,}_{ \bs L^p(\Omega)}$
is a norm, equivalent to the $\bs W^{1,p}(\Omega)$-norm.
\end{proposition}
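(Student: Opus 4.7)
The plan is to decompose the statement into three claims and handle them in the natural order: reflexivity, then the regularity (Gaffney-type) estimate, then the Poincaré-type control of $\|\bs v\|_{\bs L^p}$ by $\|\Rot\bs v\|_{\bs L^p}$.

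First, I would establish that $\W{p}{\Omega}$ is closed in $\bs W^{1,p}(\Omega)$. The divergence operator $\Div:\bs W^{1,p}(\Omega)\to L^p(\Omega)$ is continuous, so $\{\bs v:\Div\bs v=0\}$ is closed; similarly the normal trace $\bs v\mapsto \bs v\cdot\bs n_{|\Gamma}$ is continuous from $\bs W^{1,p}(\Omega)$ into $W^{1-1/p,p}(\Gamma)$, so $\{\bs v:\bs v\cdot\bs n_{|\Gamma}=0\}$ is closed. The intersection of two closed subspaces of the reflexive Banach space $\bs W^{1,p}(\Omega)$ is a reflexive Banach space, giving the first half of the statement.

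Next, the core of the proof is the classical regularity estimate
\[
\norm{\bs v}_{\bs W^{1,p}(\Omega)}\le C\bigl(\norm{\bs v}_{\bs L^p(\Omega)}+\norm{\Rot\bs v}_{\bs L^p(\Omega)}+\norm{\Div\bs v}_{L^p(\Omega)}\bigr)
\]
valid for all $\bs v\in\bs W^{1,p}(\Omega)$ with $\bs v\cdot\bs n_{|\Gamma}=0$ (or $\bs v\times\bs n_{|\Gamma}=\bs 0$), on a bounded $C^{1,1}$ domain. I would invoke this directly from the references already cited in the introduction (\cite{MitreaMitreaPipher1997,Mitrea2002,MirandaRodriguesSantos2009}), since it is the standard $L^p$ generalisation of the Friedrichs/Gaffney inequality and reproving it is out of scope. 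Restricted to $\W{p}{\Omega}$, the divergence term drops out and we get
\[
\norm{\bs v}_{\bs W^{1,p}(\Omega)}\le C\bigl(\norm{\bs v}_{\bs L^p(\Omega)}+\norm{\Rot\bs v}_{\bs L^p(\Omega)}\bigr).
\]

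The final and most delicate step is to eliminate the $\bs L^p$ term, i.e. to prove $\norm{\bs v}_{\bs L^p(\Omega)}\le C\norm{\Rot\bs v}_{\bs L^p(\Omega)}$ on $\W{p}{\Omega}$. I would argue by contradiction/compactness: suppose there exists $(\bs v_n)\subset\W{p}{\Omega}$ with $\norm{\bs v_n}_{\bs L^p(\Omega)}=1$ and $\norm{\Rot\bs v_n}_{\bs L^p(\Omega)}\to 0$. By the previous estimate $(\bs v_n)$ is bounded in $\bs W^{1,p}(\Omega)$, so by the Rellich–Kondrachov compact embedding a subsequence converges strongly in $\bs L^p(\Omega)$ and weakly in $\bs W^{1,p}(\Omega)$ to some $\bs v$ with $\norm{\bs v}_{\bs L^p(\Omega)}=1$, $\Div\bs v=0$, $\bs v\cdot\bs n_{|\Gamma}=0$, and $\Rot\bs v=\bs 0$. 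Since $\Omega$ is simply connected, $\Rot\bs v=\bs 0$ yields $\bs v=\nabla\varphi$ for some $\varphi\in W^{2,p}(\Omega)$; the remaining conditions then force $\Delta\varphi=0$ in $\Omega$ and $\partial_{\bs n}\varphi=0$ on $\Gamma$, whence $\varphi$ is constant and $\bs v=\bs 0$, contradicting $\norm{\bs v}_{\bs L^p(\Omega)}=1$.

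The main obstacle is the first regularity estimate, whose $L^p$ version for general $1<p<\infty$ on merely $C^{1,1}$ domains with tangential boundary condition is nontrivial; however, as it is available in the literature cited by the paper I would simply quote it rather than reprove it. The compactness/simple-connectedness argument and the closedness verification are then routine.
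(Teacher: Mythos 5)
Your argument is correct, but it is genuinely different in character from what the paper does: the paper's entire proof is a two-line citation (Theorem 2.1 and Lemma 2.1 of \cite{MirandaRodriguesSantos2009} for $p>\frac65$, and Theorem~2.2 and Corollary~3.3 of \cite{AmroucheSeloula2010} for general $1<p<\infty$), whereas you reconstruct the standard argument that underlies those references. Your three steps are all sound: closedness of the divergence-free, tangential subspace follows from continuity of $\Div$ and of the normal trace, and a closed subspace of a reflexive space is reflexive; the Gaffney-type estimate $\norm{\bs v}_{\bs W^{1,p}(\Omega)}\le C(\norm{\bs v}_{\bs L^p(\Omega)}+\norm{\Rot\bs v}_{\bs L^p(\Omega)}+\norm{\Div\bs v}_{L^p(\Omega)})$ is exactly the nontrivial input that the cited works supply for $C^{1,1}$ domains and general $1<p<\infty$; and the compactness/contradiction step correctly uses the simple connectedness of $\Omega$ (assumed at the start of Section~2) to write a curl-free limit as $\nabla\varphi$ and kill it via the homogeneous Neumann problem. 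Two minor points you should make explicit if you write this up: the weak $\bs W^{1,p}$-limit lies in $\W{p}{\Omega}$ because closed subspaces are weakly closed, and for $p<2$ the conclusion that a harmonic $\varphi$ with $\partial_{\bs n}\varphi=0$ is constant needs the $L^p$ uniqueness theory for the Neumann problem rather than a direct energy identity. The trade-off is clear: the paper's citation is shorter and delegates all the analysis, while your route makes visible exactly where the hard estimate enters and shows that everything beyond it is routine.
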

\proof For $p>\frac65$ the proof can be found in Theorem 2.1 and Lemma 2.1 of \cite{MirandaRodriguesSantos2009} and for general $1<p<\infty$ in Theorem~2.2 and Corollary~3.3 of  \cite{AmroucheSeloula2010}.\fim

\begin{remark} \label{PoincareTrace}
Two immediate consequences follow from this proposition: there exist  positive constants $C_q$ and $C_r$ such that, given $\bs v\in\W{p}{\Omega}$, the Sobolev inequality
\begin{equation}\label{poincare}
\norm{\bs v}_{\bs L^{q}(\Omega)}\le C_q\norm{\Rot\bs v}_{\bs L^p(\Omega)},
\end{equation}
holds with $q\leq\frac{3p}{3-p}$ if $1<p<3$, any $q<\infty$ if $p=3$ and $q=\infty$ for $p>3$
and the trace theorem
\begin{equation}\label{trace}
 \norm{\bs v_{|_\Gamma}}_{\bs L^r(\Gamma)}\leq C_r\norm{\Rot\bs v}_{\bs L^p(\Omega)},
 \end{equation}
 holds with $r\leq\frac{2p}{3-p}$ if $1<p<3$, any $r<\infty$ if $p=3$ and $r=\infty$ for $p>3$.

In particular, $\bs v\in \bs L^2(\Omega)$ if $p\geq\frac65$.

In what follows the exponents $p,\,q$ and $r$ are related by these Sobolev and trace inequalities.
\hfill{$\square$}\end{remark}

We denote
\begin{equation*}
 \Wtilde{p}{\Omega}=\W{p}{\Omega}\cap\bs L^2(\Omega)
\end{equation*}
and
\begin{equation*}
\bs L^2_\sigma(\Omega) =\overline{\Wtilde{p}{\overset{}{\Omega}}}^{\ \bs L^2(\Omega)}
=\biggl\{\bs v\in \bs L^2(\Omega):\forall\eta\in\C^1(\Omega)\
\int_\Omega \bs v \cdot\nabla\eta=0\biggr\},
\end{equation*}
and we observe that, if $p\geq\frac65$,
$\Wtilde{p}{\Omega}'=\W{p}{\Omega}'$.

\subsection{Existence of solution in the evolution problem}

Let \funcao{\bs a}{Q_T\times\RR^3}{\RR^3} be a Carath\'{e}odory function satisfying
the structural conditions
\begin{subequations}\label{Operador:a:prop}
\begin{align}
\label{Operador:a:prop:a}
\bs a(x,t,\bs u)\cdot \bs u &\geq a_*|\bs u|^p,\\
    \label{Operador:a:prop:b}
|\bs a(x,t,\bs u)|&\leq a^{*}|\bs u|^{p-1}, \\
    \label{Operador:a:prop:c}
\big(\bs a(x,t,\bs u)-\bs a(x,t,\bs v)\big)\cdot(\bs u-\bs v\big)&>0,\
\text{if}\ \bs u\neq\bs v,\\
\tag{\ref*{Operador:a:prop:c}'}
\big(\bs a(x,t,\bs u)-\bs a(x,t,\bs v)\big)\cdot(\bs u-\bs v\big)&\ge
\begin{cases}
a_*|\bs u-\bs v|^p&\text{ if }p\ge 2,\\
a_*\big(|\bs u|+|\bs v|\big)^{p-2}|\bs u-\bs v|^2&\text{ if }p<2,
\end{cases}
\end{align}
\end{subequations}
for given constants $a_*,\, a^*>0$, for all $\bs u,\,\bs v\in\RR^3$ and a.e.\  $(x,t)\in Q_T$.

We consider the following problem:
\begin{subequations}\label{Maxwell:evolutivo:forte}
 \begin{alignat}{3}
 \label{Maxwell:evolutivo:forte:a}
\Div \bs h =0&\quad\text{and}&\quad\partial_t\bs h +\Rot\left(\bs a(x,t,\Rot\bs h)\right) & =\bs f &\quad\text{in}&\quad Q_T,\\
 \label{Maxwell:evolutivo:forte:b}
\bs h\cdot \bs n =0&\quad\text{and}&\quad
\bs a(x,t,\Rot\bs h)\times \bs n & =\bs g &\text{on}&\quad\Sigma_T,\\ 
 \label{Maxwell:evolutivo:forte:d}
&&\bs h(0) & =\bs h_0 &\text{in}&\quad\Omega.
 \end{alignat}
\end{subequations}

Taking (\ref{poincare}) and (\ref{trace}) into account we assume that
\begin{equation}\label{data_f-g}
\bs f\in\bs L^{q'}(Q_T)\quad\text{and}\quad
\bs g\in\bs L^{r'}(\Sigma_T),
\end{equation}
where $q'$ and $r'$ denote the conjugate exponents of $q$ and $r$ respectively, and
\begin{equation}\label{data_h_0}
\bs h_0\in \bs L^2_\sigma(\Omega).
\end{equation}

Hence the following formula of integration by parts
\begin{equation}\label{Green:Rot}
\int_\Omega\Rot\bs a\cdot\bs\varphi - \int_\Omega\bs a\cdot\Rot\bs\varphi =
 \int_\Gamma\bs a\times\bs n\cdot\bs\varphi\quad\forall\bs\varphi\in\bs W^{1,p}(\Omega)
\end{equation}
holds with $\bs a\in\bs L^{p'}(\Omega)$, $\Rot\bs a\in\bs L^{q'}(\Omega)$ and,
in the sense of traces, $\bs a\times\bs n_{|_\Gamma}\in\bs L^{r'}(\Gamma)$  (see \cite{DautrayLions1990} and
\cite{MitreaMitreaPipher1997}).

Whenever $\partial_t\bs h(t)\in \Wtilde{p}{\Omega}'$, interpreting the
integral $\displaystyle\int_\Omega \partial_t\bs h\cdot\bs\varphi$ in the duality sense,
the above formula yields the following weak formulation of the problem (\ref{Maxwell:evolutivo:forte}):
to find $\bs h\in\,L^p(0,T;\Wtilde{p}{\Omega})$ such that, for a.e. $t\in\,(0,T)$,
\begin{gather}\label{Maxwell:evolutivo:variacional}
\begin{split}
\int_\Omega\partial_t\bs h(t)\cdot\bs\varphi + \int_\Omega \bs a(x,t,\Rot\bs h(t))&\cdot\Rot\bs\varphi \\
&=\int_\Omega\bs f(t)\cdot\bs\varphi
+ \int_\Gamma\bs g(t)\cdot\bs\varphi\quad\forall\bs\varphi\in\widetilde{\mathbb W}^p(\Omega)
\\
\bs h(0)&=\bs h_0.
\end{split}
\end{gather}

\begin{proposition}\label{Maxwell:evolutivo}
Suppose that the operator $\bs a$ satisfies the assumptions {\em (\ref{Operador:a:prop}a-c)} and the data and the initial condition satisfy {\em(\ref{data_f-g})} and {\em(\ref{data_h_0})}. Then the problem
{\em (\ref{Maxwell:evolutivo:forte})} has a unique solution $\bs h\in L^p(0,T;\Wtilde{p}{\Omega})\cap C(0,T;\bs L^2_\sigma(\Omega))$
and $\partial_t \bs h \in L^{p'}(0,T;\Wtilde{p}{\Omega}')$.

In addition, there exists a positive constant $C$ such that
\begin{multline}\label{estimate}
\|\bs h\|^2_{L^\infty(0,T;\bs L^2(\Omega))}+\|\Rot\bs h\|_{\bs L^p(Q_T)}^p\\
\le C\left(\|\bs f\|_{\bs L^{q'}(Q_T)}^{p'}+\|\bs g\|_{\bs L^{r'}(\Sigma_T)}^{p'}
+\|\bs h_0\|_{\bs L^2(\Omega)}^2\right).
\end{multline}
\end{proposition}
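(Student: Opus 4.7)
The plan is to treat this as a standard monotone parabolic problem on the space $\Wtilde{p}{\Omega}$, proved by Faedo--Galerkin approximation combined with the Minty trick.

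First I would construct a countable linearly independent family $\{\bs w_k\}_{k\ge 1}\subset\Wtilde{p}{\Omega}$ whose span is dense in $\Wtilde{p}{\Omega}$ and orthonormal in $\bs L^2_\sigma(\Omega)$; a convenient choice is the eigenbasis of the Hodge--curl operator in $\mathbb W^2(\Omega)$, which lies in $\Wtilde{p}{\Omega}$ by elliptic regularity. On the finite-dimensional space $V_n=\mathrm{span}\{\bs w_1,\dots,\bs w_n\}$ I test (\ref{Maxwell:evolutivo:variacional}) against each $\bs w_k$, obtaining a Carathéodory system of ODE's for the coordinates of the Galerkin approximation $\bs h_n(t)=\sum_{k=1}^n c_k^n(t)\bs w_k$ with initial datum the $\bs L^2$-projection of $\bs h_0$. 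Local existence follows from Peano--Carathéodory, and the global existence is an immediate consequence of the a~priori estimate below.

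Second, choosing $\bs\varphi=\bs h_n(t)$ in the Galerkin equation and using the coercivity (\ref*{Operador:a:prop:a}), the Sobolev inequality (\ref{poincare}) and the trace inequality (\ref{trace}) together with Young's inequality, one gets
\begin{equation*}
\tfrac12\tfrac{d}{dt}\|\bs h_n(t)\|_{\bs L^2(\Omega)}^2
+a_*\|\Rot\bs h_n(t)\|_{\bs L^p(\Omega)}^p
\le \tfrac{a_*}{2}\|\Rot\bs h_n(t)\|_{\bs L^p(\Omega)}^p
+C\bigl(\|\bs f(t)\|_{\bs L^{q'}(\Omega)}^{p'}+\|\bs g(t)\|_{\bs L^{r'}(\Gamma)}^{p'}\bigr),
\end{equation*}
whence integration in time yields (\ref{estimate}) uniformly in $n$. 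The growth assumption (\ref*{Operador:a:prop:b}) then gives $\bs a(\cdot,\cdot,\Rot\bs h_n)$ bounded in $\bs L^{p'}(Q_T)$, and testing the Galerkin equation against an arbitrary $\bs\varphi\in\Wtilde{p}{\Omega}$ produces a uniform bound for $\partial_t\bs h_n$ in $L^{p'}(0,T;\Wtilde{p}{\Omega}')$.

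Third, by reflexivity I extract a subsequence with $\bs h_n\rightharpoonup\bs h$ in $L^p(0,T;\Wtilde{p}{\Omega})$, $\bs h_n\overset{*}{\rightharpoonup}\bs h$ in $L^\infty(0,T;\bs L^2(\Omega))$, $\partial_t\bs h_n\rightharpoonup\partial_t\bs h$ in $L^{p'}(0,T;\Wtilde{p}{\Omega}')$ and $\bs a(\cdot,\cdot,\Rot\bs h_n)\rightharpoonup\bs\chi$ in $\bs L^{p'}(Q_T)$. An Aubin--Lions argument (using the compact embedding $\Wtilde{p}{\Omega}\hookrightarrow\bs L^2_\sigma(\Omega)$ available for $p\ge\tfrac65$, and a mild modification otherwise) yields $\bs h_n\to\bs h$ strongly in $C([0,T];\Wtilde{p}{\Omega}')$, enough to identify the initial condition and to obtain $\bs h\in C([0,T];\bs L^2_\sigma(\Omega))$ by the usual interpolation between $L^p(0,T;\Wtilde{p}{\Omega})$ and $L^{p'}(0,T;\Wtilde{p}{\Omega}')$. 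The passage to the limit is routine in every term except the nonlinear one.

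The main obstacle is identifying $\bs\chi=\bs a(x,t,\Rot\bs h)$, and I will handle it by Minty's monotonicity trick. Using the energy identity for $\bs h_n$ and lower semicontinuity of the $\bs L^2$-norm,
\begin{equation*}
\limsup_{n}\int_0^T\!\!\int_\Omega \bs a(x,t,\Rot\bs h_n)\cdot\Rot\bs h_n
\le \int_0^T\!\!\int_\Omega\bigl(\bs f\cdot\bs h+\bs g\cdot\bs h\bigr)-\tfrac12\|\bs h(T)\|^2+\tfrac12\|\bs h_0\|^2,
\end{equation*}
and the limit equation tested with $\bs h$ gives the reverse bound with $\bs\chi$ in place of $\bs a(\cdot,\cdot,\Rot\bs h_n)$. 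Combining these two, writing the monotonicity inequality
\begin{equation*}
\int_0^T\!\!\int_\Omega\bigl(\bs a(x,t,\Rot\bs h_n)-\bs a(x,t,\bs w)\bigr)\cdot\bigl(\Rot\bs h_n-\bs w\bigr)\ge 0
\end{equation*}
for arbitrary $\bs w\in\bs L^p(Q_T)^3$, passing to the limit and finally taking $\bs w=\Rot\bs h\pm\lambda\bs z$ with $\lambda\to 0^+$ forces $\bs\chi=\bs a(x,t,\Rot\bs h)$ almost everywhere.

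Finally, uniqueness is obtained by testing the difference of two solutions against itself and invoking the strict monotonicity~(\ref*{Operador:a:prop:c}), which yields $\tfrac{d}{dt}\|\bs h_1-\bs h_2\|_{\bs L^2(\Omega)}^2\le 0$ and hence $\bs h_1\equiv\bs h_2$ since they share the initial datum.
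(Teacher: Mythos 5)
Your proposal is correct and follows essentially the same route as the paper: the authors verify that the operator $A(t)$ is bounded, hemicontinuous, monotone and coercive on $\Wtilde{p}{\Omega}$ and then invoke the standard existence theorem for monotone parabolic operators from Lions' book, whose proof is precisely the Galerkin--Minty argument you write out, with uniqueness from strict monotonicity and the estimate obtained by testing with $\bs h(t)$ exactly as you do. The only difference is one of presentation: you unpack the cited abstract theorem, while the paper applies it as a black box.
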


\begin{proof}
The operator \funcao{A(t)}{\W{p}{\Omega}}{\W{p}{\Omega}'} defined for a.e. $t\in(0,T)$ by
\begin{equation}\label{operador:A:def}
 \langle A(t)\bs h,\bs \varphi\rangle=
\int_\Omega \bs a(x,t,\Rot\bs h)\cdot\Rot\bs \varphi \quad
 \forall\,\bs h,\,\bs \varphi\in\W{p}{\Omega},
\end{equation}
 is a uniformly bounded (independently of $t$), hemicontinuous, monotone and coercive operator, due to the structural properties (\ref{Operador:a:prop}a-c).
Defining, for a.e. $t\in(0,T)$, $L(t)\in\Wtilde{p}{\Omega}'$ by
\begin{equation*}
 \langle L(t),\bs\varphi\rangle=\int_\Omega\bs f(t)\cdot\bs\varphi+
  \int_\Gamma\bs g(t)\cdot\bs\varphi\quad\forall\bs\varphi\in\Wtilde{p}{\Omega},
\end{equation*}
and adapting a well-known existence theorem to monotone operators independent of $t$ (see \cite{Lions1969}), we easily
prove that problem (\ref{Maxwell:evolutivo:variacional}) has a solution in
$L^p(0,T;\Wtilde{p}{\Omega})\cap C(0,T;\bs L^2_\sigma(\Omega))$.

The uniqueness of solution results directly from the strict monotonicity
 (\ref{Operador:a:prop:c}) of the operator $A$.

 To obtain the estimate (\ref{estimate}) choose $\bs h(t)$ as test function in (\ref{Maxwell:evolutivo:variacional}).
 Denoting $Q_t=\Omega\times(0,t)$ and $\Sigma_t=\Gamma\times(0,t)$, we have
\begin{equation*}
\tfrac12\int_\Omega|\bs h(t)|^2+a_*\int_{Q_t}|\Rot\bs h|^p\leq\int_{Q_t}\bs f\cdot\bs h+\int_{\Sigma_t}\bs g\cdot\bs h
 +\tfrac12\int_\Omega |\bs h_0|^2.
 \end{equation*}

Applying H\"{o}lder and Young inequalities and the Remark~\ref{PoincareTrace}, we obtain
\begin{align*}
 \tfrac12\norm{\bs h(t)}^2_{\bs L^2(\Omega)}+
&
a_*\norm{\Rot\bs h}^p_{\bs L^p(Q_t)}\\
&\leq \norm{\bs f}_{\bs L^{q'}(Q_T)} \norm{\bs h}_{\bs L^{q}(Q_T)}
+
\norm{\bs g}_{\bs L^{r'}(\Sigma_T)} \norm{\bs h}_{\bs L^{r}(\Sigma_T)}+
\tfrac12\norm{\bs h_0}^2_{\bs L^2(\Omega)}\\
&\leq C_1 \big(\norm{\bs f}^{p'}_{\bs L^{q'}(Q_T)} + \norm{\bs g}^{p'}_{\bs L^{r'}(\Sigma_T)}\big)\norm{\Rot\bs h}^p_{\bs L^{p}(Q_T)}
+
\tfrac12\norm{\bs h_0}^2_{\bs L^2(\Omega)}
\end{align*}
and the conclusion follows.
\end{proof}

\begin{remark}
The functional framework we introduced provides a general variational setting for the stationary solutions of (\ref{Maxwell:evolutivo:forte}).
Indeed, for instance for arbitrary $\bs f\in\bs L^{q'}(\Omega)$, $\bs g\in\bs L^{r'}(\Gamma)$ and $\nu\in L^{\infty}(\Omega)$, $\nu(x)\ge a_*>0$
for a.e.\ $x\in\Omega$, the unique minimum of the functional in $\W{p}{\Omega}$,
\begin{equation*}
J(\bs h)=\int_\Omega\frac{\nu}{p}\left|\Rot\bs h\right|^p-\int_\Omega\bs f\cdot\bs h-\int_\Gamma\bs g\cdot\bs h,
\end{equation*}
provides the weak stationary solution to (\ref{Maxwell:evolutivo:forte}).
However, as remarked in \cite{MirandaRodriguesSantos2009} in the stationary problem, for the existence of solution of the strong boundary value problem
(\ref{Maxwell:evolutivo:forte}) with given data
$(\bs f,\bs g)$, it is necessary that $\bs f$ is divergence
free and $\bs g$ is tangential and compatible with $\bs f$ ($\nabla_\Gamma\cdot\bs g = \bs f\cdot\bs n$) on $\Gamma$.
But the weak formulation (\ref{Maxwell:evolutivo:variacional}) of the problem (\ref{Maxwell:evolutivo:forte}) has a
unique solution with no restrictions on the data.
\hfill{$\square$}\end{remark}

 Usually, a weak equation is also  a  strong one, as long as it has enough regularity.
 The situation here requires also additional compatibility conditions, since we are working with strongly coupled
 systems and the test functions have strong restrictions (they are divergence free and tangential on the boundary).
 Indeed, given $\bs f\in\bs L^{q'}(\Omega)$, the Helmholtz decomposition
(see \cite{SimaderSohr1992}) gives us that $\bs f=\bs f_0+\nabla\xi$, where $\bs f_0$ is divergence free.
On the other hand, if  $\bs g\in\bs L^{r'}(\Gamma)$, $\bs g=\bs g_T+\bs g_N$, where $\bs g_T$ and $ \bs g_N$ are,
respectively, the tangential and the normal components of $\bs g$.
So, the set of test functions $\Wtilde{p}{\Omega}$  only takes into account $\bs f_0$
(the divergence free component of $\bs f$)  and $\bs g_T$ (the tangential component of
$\bs g$) and consequently the problems (\ref{Maxwell:evolutivo:variacional}) with data $(\bs f,\bs g)$ and
$(\bs f_0,\bs g_T)$ yields the same solution and both correspond to the weak formulation of the problem
(\ref{Maxwell:evolutivo:forte}) with data $(\bs f_0,\bs g_T)$.

In the particular case where
\begin{equation}\label{a-potential}
\bs a(x,t,\bs u)=\nu(x)|\bs u|^{p-2}\bs u,\ \text{with}\ 0<a_*\leq\nu\leq a^*,\ x\in\Omega,
\end{equation}
we can improve the Proposition~\ref{Maxwell:evolutivo} assuming more regularity on the data.

In what follows we denote $\alpha\vee\beta=\max\{\alpha,\beta\}$ and $\alpha\wedge\beta=\min\{\alpha,\beta\}$.

\begin{proposition}\label{te_proposition}
Let $\bs f\in \bs L^{q'\vee2}(Q_T)$,
$\bs g\in L^\infty\big(0,T;\bs L^{r'}(\Gamma)\big)\cap W^{1,r'}\big(0,T;\bs L^{r'}(\Gamma)\big)$ and
 $\bs h_0\in \Wtilde{p}{\Omega}$.
Then the solution $\bs h$ of the problem {\em(\ref{Maxwell:evolutivo:forte})} for
$\bs a$ defined in {\em(\ref{a-potential})} verifies
\begin{equation}\label{te}
 \partial_t\bs h\in \bs L^2(Q_T)\quad\text{and}\quad\Rot\bs h\in L^\infty(0,T;\bs L^{p}(\Omega)).
\end{equation}
\end{proposition}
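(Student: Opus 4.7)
The plan is to improve the regularity obtained in Proposition~\ref{Maxwell:evolutivo} by testing the weak equation with $\partial_t\bs h$ itself. Since a priori $\partial_t\bs h$ only lives in $L^{p'}(0,T;\Wtilde{p}{\Omega}')$, this step must be carried out on an approximating sequence. I would use a Galerkin scheme $\bs h_n$ in a regular basis of $\Wtilde{p}{\Omega}$ (for instance, an orthonormal basis of $\bs L^2_\sigma(\Omega)$ formed by eigenfunctions of the curl--curl operator with the tangential boundary condition) so that $\partial_t\bs h_n$ is an admissible test function at the finite-dimensional level, and then pass to the limit in the end.

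The special feature exploited here is that the nonlinearity in (\ref{a-potential}) derives from the convex potential $\frac{\nu}{p}|\Rot\bs h|^p$; testing with $\partial_t\bs h_n$ therefore yields, at each time,
\begin{equation*}
\int_\Omega|\partial_t\bs h_n|^2+\frac{d}{dt}\int_\Omega\frac{\nu}{p}|\Rot\bs h_n|^p=\int_\Omega\bs f\cdot\partial_t\bs h_n+\int_\Gamma\bs g\cdot\partial_t\bs h_n.
\end{equation*}
The volume term on the right is bounded by Cauchy--Schwarz and Young's inequality thanks to the assumption $\bs f\in\bs L^2(Q_T)$. The boundary term is the delicate one: since $\partial_t\bs h_n$ has no trace control, I would integrate by parts in time to write
\begin{equation*}
\int_0^t\!\!\int_\Gamma\bs g\cdot\partial_t\bs h_n=\int_\Gamma\bs g(t)\cdot\bs h_n(t)-\int_\Gamma\bs g(0)\cdot\bs h_0-\int_0^t\!\!\int_\Gamma\partial_t\bs g\cdot\bs h_n,
\end{equation*}
which is licit because the embedding $W^{1,r'}(0,T;\bs L^{r'}(\Gamma))\hookrightarrow C([0,T];\bs L^{r'}(\Gamma))$ gives meaning to $\bs g(t)$ and $\bs g(0)$.

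Each of the three resulting terms is then estimated via the trace inequality (\ref{trace}) applied to $\bs h_n$ (respectively $\bs h_0$), so that a small multiple of $\norm{\Rot\bs h_n(t)}_{\bs L^p(\Omega)}^p$ can be absorbed into the potential energy on the left-hand side by a Young inequality with carefully chosen exponents. The remaining pieces are integrable in time thanks to the $L^\infty$ and $W^{1,r'}$ hypotheses on $\bs g$ together with the previous estimate (\ref{estimate}), so Gronwall's lemma closes the bound; the initial boundary contribution is finite because $\bs h_0\in\Wtilde{p}{\Omega}$ supplies both the trace of $\bs h_0$ on $\Gamma$ and a finite initial potential energy $\int_\Omega\frac{\nu}{p}|\Rot\bs h_0|^p$. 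The main obstacle I anticipate is precisely this combined time integration by parts and absorption step: one must keep the coefficients in Young's inequality small enough (with $p$-dependent conjugate exponents) to leave a strictly positive fraction of the highest-order term on the left, while still dominating all lower-order terms uniformly in $n$. Once uniform bounds on $\partial_t\bs h_n$ in $\bs L^2(Q_T)$ and on $\Rot\bs h_n$ in $L^\infty(0,T;\bs L^p(\Omega))$ are obtained, weak and weak-$\ast$ compactness together with the uniqueness of the limit granted by Proposition~\ref{Maxwell:evolutivo} identify the limit with $\bs h$ and yield (\ref{te}).
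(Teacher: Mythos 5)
Your proposal is correct and follows essentially the same route as the paper: a Galerkin approximation tested with $\partial_t\bs h$, the identification of the nonlinear term as the time derivative of the potential $\int_\Omega\frac{\nu}{p}|\Rot\bs h|^p$, an integration by parts in time of the boundary term, and absorption via the trace inequality and Young's inequality. The only cosmetic difference is your invocation of Gronwall at the end, which the paper does not need since the absorbed estimate already closes directly using (\ref{estimate}) and the $L^\infty$-in-time bounds on $\bs g$.
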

\begin{proof}
Using Galerkin approximations (see for instance \cite{Lions1969} or Chapter~3 of \cite{Zheng2004}), we may  set formally  $\partial_t\bs h(t)$ as test function in (\ref{Maxwell:evolutivo:variacional}). Integrating between $0$ and $t$ leads to
\begin{multline*}\int_0^t\int_\Omega|\partial_t\bs h|^2+\int_0^t\int_\Omega\nu|\Rot\bs h|^{p-2}\Rot\bs h\cdot\partial_t \Rot\bs h\\
=
 \int_0^t\int_\Omega\bs f\cdot\partial_t\bs h+
 \int_0^t\int_\Gamma\bs g\cdot\partial_t\bs h.
 \end{multline*}

 But
\begin{equation*}
\int_0^t\int_\Gamma\bs g(t)\cdot\partial_t\bs h(t)=\int_\Gamma\bs g(t)\cdot\bs h(t)-\int_\Gamma\bs g(0)\cdot\bs h_0-\int_0^t\int_\Gamma \partial_t\bs g\cdot\bs h(t)
\end{equation*}
so we conclude that
\begin{multline*}
 \left|\int_0^t\int_\Gamma\bs g(t)\cdot\partial_t\bs h(t)\right|\\
 \le C_1\|\bs g\|_{L^\infty(0,T;\bs L^{r'}(\Gamma))}\|\Rot\bs h(t)\|_{L^p(\Omega)}
+\|\partial_t\bs g\|_{L^{r'}(\Sigma_T)}\|\Rot\bs h\|_{\bs L^p(Q_T)}+C_2.
\end{multline*}

Noting that
\begin{equation*}\int_0^t\int_\Omega \nu|\Rot\bs h|^{p-2}\Rot\bs h(t)\cdot\partial_t \Rot\bs h(t)=\tfrac1{p}\int_\Omega\nu|\Rot\bs h(t)|^{p}
  -\tfrac1{p}\int_\Omega\nu|\Rot\bs h_0|^{p},
\end{equation*}
we have
\begin{multline*}
\int_0^t\int_\Omega |\partial_t\bs h|^2
+\tfrac{a_*}{p}\int_\Omega |\Rot\bs h(t)|^p\\
\leq \|\bs f\|_{\bs L^2(Q_T)} \|\partial_t\bs h\|_{\bs L^2(Q_T)}
+C_1\|\bs g\|_{L^\infty(0,T;\bs L^{r'}(\Gamma))}\|\Rot\bs h(t)\|_{\bs L^p(\Omega)}\\
+\|\partial_t\bs g\|_{L^{r'}(\Sigma_T)}\|\Rot\bs h\|_{\bs L^p(Q_T)}+C_2
+\tfrac{a^*}{p}\|\Rot\bs h_0\|_{\bs L^p(\Omega)}^p
\end{multline*}
and so
\begin{multline}\label{te_proposition_estimate}
\norm{\partial_t\bs h}^2_{\bs L^2(Q_T)} +
\norm{\Rot\bs h}^p_{L^\infty(0,T;\bs L^p(\Omega))}\\
\leq
C\big(\|\bs f\|^2_{\bs L^2(Q_T)} + \|\bs g\|^{p'}_{L^\infty(0,T;\bs L^{r'}(\Gamma))}
+\|\partial_t\bs g\|^{p'}_{\bs L^{r'}(\Sigma_T)}\big)+C_3.
\end{multline}
\end{proof}

\subsection{The asymptotic behaviour when $t\rightarrow\infty$}\label{ca-equacao}

In this section we give sufficient conditions in order to establish that
\begin{equation*}
 \norm{\bs h(t)-\bs h_\infty}_{\bs L^2(\Omega)}\longrightarrow 0\quad
\text{as}\quad t\rightarrow\infty,
\end{equation*}
where $\bs h$ denotes the solution of the problem (\ref{Maxwell:evolutivo:forte})
and $\bs h_\infty$ solves the  stationary problem

\begin{alignat*}{3}
\Div \bs h_\infty  =0&\quad\text{and}&\quad\Rot\left(\bs a_\infty(x,\Rot\bs h_\infty)\right) & =\bs f_\infty &\quad\text{in}&\quad \Omega,\\
\bs h_\infty\cdot \bs n =0&\quad\text{and}&\quad
\bs a_\infty(x,\Rot\bs h_\infty)\times \bs n & =\bs g_\infty &\text{on}&\quad\Gamma,
\end{alignat*}
where
\begin{equation*}
\bs f_\infty\in\bs L^{q'}(\Omega)
\quad\text{and}\quad
\bs g_\infty\in\bs L^{r'}(\Gamma),
\end{equation*}
obtaining the variational formulation
\begin{equation}\label{Maxwell:estacionario:variacional}
\int_\Omega \bs a_\infty(x,\Rot\bs h_\infty)\cdot\Rot\bs\varphi=\int_\Omega\bs f_\infty\cdot\bs\varphi+\int_\Gamma \bs g_\infty\cdot\bs\varphi\qquad
\forall\bs\varphi\in\Wtilde{p}{\Omega}
\end{equation}
by applying the integration by parts (\ref{Green:Rot}).

Let
\begin{equation*}
\bs f\in L^\infty(0,\infty;\bs L^{q'}(\Omega))
\quad\text{and}\quad
\bs g\in L^\infty(0,\infty;\bs L^{r'}(\Gamma)),
\end{equation*}
and denote
\begin{equation}\label{xi}
\xi(t)=\norm{\bs f(t)-\bs f_\infty}_{L^{s}(\Omega)}^{p'\wedge2} +
\norm{\bs g(t)-\bs g_\infty}_{L^{r'}(\Gamma)}^{p'\wedge2},\ \text{with}\
s=\begin{cases}
2&\text{if } \frac65\leq p<2\\
q'&\text{if } p\geq2
\end{cases}
\end{equation}
and
\begin{equation*}
\zeta(t)=\|\bs a(x,t,\bs u)-\bs a_\infty(x,\bs u)\|_{\bs L^{p'}(\Omega\times\RR^3)}^{p'}.
\end{equation*}

\subsubsection{The degenerate case $p>2$}

\begin{theorem}\label{Maxwell:evo:assimp}
Let $p>2$, suppose that the operators $\bs a$ and $\bs a_\infty$ satisfy~{\em(\ref{Operador:a:prop}\,a, b, c')}
and
\begin{equation*}
\Tende{\displaystyle{\int_{\frac{t}2}^{t}}\big(\zeta(\tau)
+\xi(\tau)\big)\,d\tau}
 {t\rightarrow\infty} {0.}
\end{equation*}

Then we have
 \begin{equation*}
  \Tende{\norm{\bs h(t)-\bs h_\infty}_{\bs L^2(\Omega)}}{t\rightarrow\infty}{0}.
 \end{equation*}
\end{theorem}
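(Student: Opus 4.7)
The plan is to subtract the stationary weak formulation (\ref{Maxwell:estacionario:variacional}) from the evolution formulation (\ref{Maxwell:evolutivo:variacional}) using $\bs h(t)-\bs h_\infty\in\Wtilde{p}{\Omega}$ as test function, which produces a differential identity for $y(t):=\|\bs h(t)-\bs h_\infty\|_{\bs L^2(\Omega)}^{2}$. Splitting
$$\bs a(x,t,\Rot\bs h)-\bs a_\infty(x,\Rot\bs h_\infty)=\bigl[\bs a(x,t,\Rot\bs h)-\bs a(x,t,\Rot\bs h_\infty)\bigr]+\bigl[\bs a(x,t,\Rot\bs h_\infty)-\bs a_\infty(x,\Rot\bs h_\infty)\bigr],$$
the first bracket is bounded below by $a_*\|\Rot(\bs h-\bs h_\infty)\|_{\bs L^p(\Omega)}^p$ via the strong monotonicity hypothesis in its sharp $p\ge 2$ form, while the second is controlled by H\"older by $\zeta(t)^{1/p'}\|\Rot(\bs h-\bs h_\infty)\|_{\bs L^p(\Omega)}$. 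The linear terms on the right, $\int_\Omega(\bs f-\bs f_\infty)\cdot(\bs h-\bs h_\infty)+\int_\Gamma(\bs g-\bs g_\infty)\cdot(\bs h-\bs h_\infty)$, are estimated by the Sobolev and trace inequalities of Remark~\ref{PoincareTrace}. A single Young inequality absorbs a fraction of $\|\Rot(\bs h-\bs h_\infty)\|_{\bs L^p}^p$ into the dissipative term, leaving
$$y'(t)+c\,\|\Rot(\bs h-\bs h_\infty)\|_{\bs L^p(\Omega)}^{p}\le C\bigl(\zeta(t)+\xi(t)\bigr),$$
with $\xi$ exactly as in (\ref{xi}) because $p'\wedge 2=p'$ and $s=q'$ when $p\ge 2$.

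The next step is to use the Sobolev embedding from Remark~\ref{PoincareTrace} once more, giving $\|\bs h-\bs h_\infty\|_{\bs L^2}^p\le C\|\Rot(\bs h-\bs h_\infty)\|_{\bs L^p}^p$ for $p\ge 2$, which reduces the inequality to the scalar ODI
$$y'(t)+c_1\,y(t)^{p/2}\le F(t),\qquad F(t):=C(\zeta(t)+\xi(t)),$$
carrying a \emph{superlinear} dissipation since $p/2>1$. Running the same estimate directly for $\bs h$ (i.e.\ taking $\bs h_\infty\equiv 0$) combined with $\bs f,\bs g\in L^\infty$ produces $\bs h\in L^\infty(0,\infty;\bs L^2(\Omega))$, so $y$ is uniformly bounded by some constant $M$ on $[0,\infty)$.

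The crux of the argument, and the only nontrivial step, is deducing $y(t)\to 0$ from the averaged smallness $\int_{t/2}^{t}F\to 0$ rather than any pointwise decay of $F$. The plan is a proof by contradiction exploiting the exponent $p/2>1$: if $y(t_n)\ge\varepsilon>0$ along some sequence $t_n\to\infty$, integrating $y'\le F$ backwards yields
$$y(s)\ge y(t_n)-\int_{s}^{t_n}F\ge\varepsilon-\int_{t_n/2}^{t_n}F\ge\tfrac{\varepsilon}{2},\qquad s\in[t_n/2,t_n],$$
for $n$ large, so $\int_{t_n/2}^{t_n}y^{p/2}\ge(\varepsilon/2)^{p/2}\,t_n/2\to\infty$. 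This contradicts the integrated ODI
$$c_1\int_{t_n/2}^{t_n}y^{p/2}\le y(t_n/2)+\int_{t_n/2}^{t_n}F\le M+o(1),$$
establishing $y(t)\to 0$ and finishing the theorem.
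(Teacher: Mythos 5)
Your derivation of the differential inequality is exactly the paper's: the same subtraction of (\ref{Maxwell:estacionario:variacional}) from (\ref{Maxwell:evolutivo:variacional}) with $\bs w(t)=\bs h(t)-\bs h_\infty$ as test function, the same splitting of the nonlinear term so that (\ref{Operador:a:prop:c}') yields the dissipation $a_*\|\Rot\bs w\|_{\bs L^p}^p$ and H\"older/Young yield the source $C(\zeta+\xi)$, and the same use of Remark~\ref{PoincareTrace} to turn the dissipation into $c_1\,y(t)^{p/2}$. Where you genuinely diverge is in passing from the ordinary differential inequality $y'+c_1y^{p/2}\le F$ to the conclusion. The paper invokes Simon's Lemma~\ref{lema:Simon} with $t_0=t/2$, which gives the explicit quantitative bound $y(t)\le\bigl(\tfrac{c_1(p-2)}{2}t\bigr)^{-2/(p-2)}+\int_{t/2}^tF$; this rate is not merely decorative, since the paper reuses it later (in the remark on $\bs h_n(t)-\bs h_{n\infty}$) with constants uniform in the exponent. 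You instead run an elementary contradiction argument: a backward integration of $y'\le F$ propagates $y(t_n)\ge\varepsilon$ to $y\ge\varepsilon/2$ on all of $[t_n/2,t_n]$, making $\int_{t_n/2}^{t_n}y^{p/2}$ grow linearly in $t_n$, against the integrated inequality which bounds it by $y(t_n/2)+\int_{t_n/2}^{t_n}F$. This is correct, avoids citing Simon's lemma, and in fact never uses $p/2>1$ (only the length of the interval $[t/2,t]$), but it buys only qualitative convergence and requires the extra ingredient $y\in L^\infty(0,\infty)$, which you correctly supply from the standing assumptions $\bs f\in L^\infty(0,\infty;\bs L^{q'}(\Omega))$ and $\bs g\in L^\infty(0,\infty;\bs L^{r'}(\Gamma))$ by testing the evolution equation with $\bs h(t)$ itself; the paper's route needs no such a priori bound.
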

\begin{proof}
Choosing for test function in (\ref{Maxwell:estacionario:variacional}), for a.e. $t\in\RR^+$,
$\bs w(t)=\bs h(t)-\bs h_\infty$, we have
\begin{equation}\label{Maxwell:evo:assimp:3}
 \int_\Omega \bs a_\infty(x,\Rot\bs h_\infty)\cdot\Rot\bs w(t)=\int_\Omega\bs f_\infty\cdot\bs w(t)+\int_\Gamma\bs g_\infty\cdot\bs w(t).
\end{equation}

Taking $\bs w(t)$ as test function in~(\ref{Maxwell:evolutivo:variacional}), for a.e.
$t\in\RR^+$,
\begin{equation*}
 \int_\Omega \partial_t\bs h(t)\cdot\bs w(t)+\int_\Omega \bs a\bigl(x,t,\Rot\bs h(t)\bigr)\cdot\Rot\bs w(t)=\int_\Omega\bs f(t)\cdot\bs w(t)+
 \int_\Gamma\bs g(t)\cdot\bs w(t),
\end{equation*}
we conclude that
\begin{multline}\label{Maxwell:evo:assimp:5}
 \int_\Omega\partial_t\bs h(t)\cdot\bs w(t)+
 \int_\Omega \Bigl(\bs a\bigl(x,t,\Rot\bs h(t)\bigr)-\bs a\bigl(x,t,\Rot\bs h_\infty\bigr)\Bigr)\cdot\Rot\bs w(t)\\
 =\int_\Omega\bigl(\bs f(t)-\bs f_\infty\bigr)\cdot\bs w(t)+
 \int_\Gamma\bigl(\bs g(t)-\bs g_\infty\bigr)\cdot\bs w(t)\\
 +\int_\Omega\Big(\bs a_\infty\big(x,\Rot\bs h_\infty\big)-\bs a\big(x,t,\Rot\bs h_\infty\big)\Big)\cdot\Rot\bs w(t).
\end{multline}

Since, by (\ref{Operador:a:prop}c'),
\begin{equation*}
\int_\Omega \Bigl(\bs a\bigl(x,t,\Rot\bs h(t)\bigr)-\bs a\bigl(x,t,\Rot\bs h_\infty\bigr)\Bigr)\cdot\Rot\bs w(t)\ge a_*\int_\Omega|\Rot\bs w(t)|^p,
\end{equation*}
subtracting (\ref{Maxwell:evo:assimp:3}) from (\ref{Maxwell:evo:assimp:5}),
using H\"older and Young inequalities and the Remark~\ref{PoincareTrace}, we have

\begin{multline}\label{Maxwell:evo:assimp:9}
 \frac12\frac{d}{dt}\int_\Omega|\bs w(t)|^2 +
  C\left(\int_\Omega|\bs w(t)|^2\right)^{\frac p2}\\
\leq D_1\left(\norm{\bs f(t)-\bs f_\infty}^{p'}_{\bs L^{q'}(\Omega)}+
  \norm{\bs g(t)-\bs g_\infty}^{p'}_{\bs L^{r'}(\Gamma)}\right)+D_2\,\zeta(t).
\end{multline}

Denoting
$$\phi(t)=\int_\Omega|\bs w(t)|^2\quad \text{and}\quad
l(t)=
2D_1\xi(t)+2D_2\,\zeta(t),$$
the inequality (\ref{Maxwell:evo:assimp:9}) is written as follows

\begin{equation*}
 \phi'(t)+2C\,\phi(t)^{\frac{p}{2}} \leq l(t).
\end{equation*}

So, applying Lemma \ref{lema:Simon} bellow with $t_0=\frac{t}{2}$, the theorem follows from
\begin{equation*}
 \int_\Omega |\bs h(t)-\bs h_\infty|^2
 \leq \left(\tfrac{C(p-2)}{2}t\right)^\frac{-2}{p-2} +
 \int_{\frac{t}{2}}^t l(\sigma)\,d\sigma.
\end{equation*}
\end{proof}

\begin{lemma}[\cite{Simon1975}, p~600]\label{lema:Simon}
Let $\phi$ be a real, continuous, positive function, a.e. differentiable in an interval $I\subseteq\RR$,
such that
 \begin{equation*}  
  \phi'(t)+c(t)\,\phi(t)^\frac p2\leq l(t)\quad\text{for a.e. }t\in I,
 \end{equation*}
 being $p>2$, $c\geq0$ and $l$ integrable in $I$. Then
 \begin{equation*}
 \forall\, t_0,t\in\, I:\, t_0\le t \quad \phi(t)\leq
 \left(\tfrac{p-2}{2}\int_{t_0}^tc(\sigma)\,d\sigma\right)^\frac{-2}{p-2} +
 \int_{t_0}^t l(\sigma)\,d\sigma.
 \end{equation*}
\end{lemma}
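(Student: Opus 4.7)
The plan is to compare $\phi$ with an explicit supersolution constructed from the two terms on the right-hand side. Denote
\[A(t)=\left(\tfrac{p-2}{2}\int_{t_0}^t c(\sigma)\,d\sigma\right)^{-2/(p-2)}\quad\text{and}\quad L(t)=\int_{t_0}^t l(\sigma)\,d\sigma,\]
so that the target inequality reads $\phi(t)\leq A(t)+L(t)$. A direct differentiation shows that $A$ solves the homogeneous Bernoulli-type equation $A'(t)+c(t)A(t)^{p/2}=0$ on $(t_0,\infty)$ with the singular initial condition $A(t_0^+)=+\infty$, i.e.\ $A$ is exactly the maximal solution obtained by dropping the forcing $l$ from the differential inequality.

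I would then argue by a ``last crossing time'' comparison. Suppose, for contradiction, that $\phi(t_1)>A(t_1)+L(t_1)$ at some $t_1>t_0$. Since $A(t)\to+\infty$ as $t\to t_0^+$ while $\phi$ and $L$ stay continuous, there is a largest $\tau\in[t_0,t_1)$ with $\phi(\tau)\le A(\tau)+L(\tau)$; by continuity equality holds at $\tau$ and $\phi>A+L$ on $(\tau,t_1]$. On this subinterval, using $A'=-cA^{p/2}$ and $L'=l$, the assumed differential inequality yields
\[(\phi-A-L)'(t)\leq-c(t)\phi(t)^{p/2}+c(t)A(t)^{p/2}\quad\text{a.e. on }(\tau,t_1].\]
As soon as $\phi\geq A$ on $(\tau,t_1]$, the right-hand side is $\leq 0$, so $\phi-A-L$ is non-increasing there; combined with $\phi(\tau)-A(\tau)-L(\tau)=0$ this forces $\phi(t_1)\leq A(t_1)+L(t_1)$, a contradiction.

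The main obstacle is precisely the pointwise step $c\phi^{p/2}\geq cA^{p/2}$, which requires $\phi\geq A$. This is automatic when the forcing $l$ is nonnegative (so that $L\geq 0$ and $\phi>A+L\geq A$ on the interval), and this is the case in the invocation inside Theorem~\ref{Maxwell:evo:assimp}, where $l$ is a sum of nonnegative quantities. For full generality one can either replace $L$ by $\max(0,L)$ and iterate the comparison on the maximal subintervals where $L$ has constant sign, or appeal directly to Simon's original refinement in \cite{Simon1975}.
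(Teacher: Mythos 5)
The paper offers no proof of this lemma: it is quoted directly from Simon's article, so there is nothing internal to compare against, and a self-contained argument like yours is welcome. Your comparison scheme is sound in its core: the computation $A'+cA^{p/2}=0$ is correct (since $-\tfrac{2}{p-2}-1=-\tfrac{p}{p-2}$), the blow-up $A(t)\to+\infty$ as $t$ decreases to the first instant where $\int_{t_0}^{\cdot}c$ becomes positive guarantees that the last crossing time $\tau$ exists and that the contradiction set is nonempty, and on $(\tau,t_1]$ the estimate $(\phi-A-L)'\le -c\,(\phi^{p/2}-A^{p/2})\le 0$ closes the argument once $\phi\ge A$ holds there.

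Two remarks on the obstacle you identify. First, the restriction to $l\ge 0$ is not a defect of your method but of the statement itself: the lemma is false as literally written if $l$ may change sign. Take $p=4$, $c\equiv 1$, $t_0=0$, $\phi(0)=1$, and let $l$ be a small negative constant on a short interval and zero elsewhere; the solution of $\phi'=-\phi^{2}+l$ remains positive for all time, while the claimed majorant $t^{-1}+\int_0^t l$ is eventually negative. So the hypothesis $l\ge 0$ is genuinely needed (and is satisfied in the paper's application, where $l=2D_1\xi+2D_2\zeta\ge 0$); your proposed repair of replacing $L$ by $\max(0,L)$ would change the conclusion rather than prove it, and should be dropped in favour of simply adding $l\ge0$ to the hypotheses. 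Second, a smaller caveat you share with the statement: to pass from $(\phi-A-L)'\le 0$ a.e.\ to monotonicity of $\phi-A-L$ you need $\phi$ absolutely continuous (a.e.\ differentiability alone admits singular increasing parts, as for the Cantor function). This is harmless in the application, where $\phi(t)=\int_\Omega|\bs h(t)-\bs h_\infty|^2$ is absolutely continuous, but it should be made explicit if the proof is to stand alone.
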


\subsubsection{The case $p=2$}

\begin{theorem}
Let $p=2$ and suppose that the operators $\bs a$ and $\bs a_\infty$ verify~{\em(\ref{Operador:a:prop}\,a, b, c')}
and
\begin{equation*}
\Tende{\displaystyle{\int_{t}^{t+1}}\big(\zeta(\tau)+\xi(\tau)\big)\,d\tau}
 {t\rightarrow\infty} {0.}
\end{equation*}

Then we have
 \begin{equation*}
  \Tende{\norm{\bs h(t)-\bs h_\infty}_{\bs L^2(\Omega)}}{t\rightarrow\infty}{0}.
 \end{equation*}
\end{theorem}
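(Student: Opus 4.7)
The plan is to parallel the argument used in Theorem \ref{Maxwell:evo:assimp} for $p>2$, but to replace the nonlinear Lemma \ref{lema:Simon} by a linear Gronwall-type inequality, which is available precisely because $p=2$ puts us in the Hilbert setting. As before, I would take the difference $\bs w(t)=\bs h(t)-\bs h_\infty$ as a test function in both the evolutionary weak formulation (\ref{Maxwell:evolutivo:variacional}) and the stationary formulation (\ref{Maxwell:estacionario:variacional}), and subtract, obtaining the analogue of (\ref{Maxwell:evo:assimp:5}). The structural hypothesis (\ref{Operador:a:prop}c') for $p=2$ then yields
\[
\int_\Omega\bigl(\bs a(x,t,\Rot\bs h(t))-\bs a(x,t,\Rot\bs h_\infty)\bigr)\cdot\Rot\bs w(t)\ge a_*\int_\Omega|\Rot\bs w(t)|^2.
\]

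Using the Poincaré inequality (\ref{poincare}) with $p=q=2$, which is applicable in $\Wtilde{2}{\Omega}$, together with the Hölder and Young inequalities on the right-hand side, I would arrive at a differential inequality of the form
\[
\tfrac{d}{dt}\phi(t)+2C\,\phi(t)\le l(t),\qquad \phi(t)=\|\bs w(t)\|_{\bs L^2(\Omega)}^2,
\]
where $l(t)=2D_1\xi(t)+2D_2\zeta(t)$, exactly as in (\ref{Maxwell:evo:assimp:9}) but with the exponent $p/2=1$ collapsing the left-hand side to a linear one. Integrating via Gronwall then gives, for any $t\ge t_0\ge 0$,
\[
\phi(t)\le e^{-2C(t-t_0)}\phi(t_0)+\int_{t_0}^{t}e^{-2C(t-\sigma)}l(\sigma)\,d\sigma.
\]

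The main obstacle, and the only place where the present hypothesis differs from the degenerate case, is to deduce from the local-in-time integrability assumption $\int_{t}^{t+1}(\zeta(\tau)+\xi(\tau))\,d\tau\to 0$ that the convolution integral above tends to zero. I would handle this by partitioning the interval $[t_0,t]$ into unit blocks and bounding
\[
\int_{t_0}^{t}e^{-2C(t-\sigma)}l(\sigma)\,d\sigma\le \sum_{k\ge 0}e^{-2Ck}\int_{t-k-1}^{t-k}l(\sigma)\,d\sigma.
\]
Given $\ep>0$, the hypothesis furnishes $T_\ep$ such that each integral on the right with $t-k-1\ge T_\ep$ is smaller than $\ep$; the finitely many remaining terms are controlled by the uniform bound on $l$ (which follows from $\bs f,\bs g\in L^\infty$ and the boundedness of $\bs a,\bs a_\infty$ from (\ref{Operador:a:prop}b)) times the tail $\sum_{k> t-T_\ep-1}e^{-2Ck}$, which is itself arbitrarily small for $t$ large. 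Combining with the exponential decay of the initial-data term $e^{-2C(t-t_0)}\phi(t_0)$ completes the proof.
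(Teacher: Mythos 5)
Your argument is correct and follows essentially the same route as the paper: the same choice of test function $\bs w(t)=\bs h(t)-\bs h_\infty$, the same use of (\ref{Operador:a:prop}c') and Remark~\ref{PoincareTrace} to reach the linear differential inequality $\phi'(t)+C\phi(t)\le l(t)$, and the same conclusion via exponential decay of the initial term plus a unit-block estimate of the forcing term. The only difference is that the paper invokes Lemma~\ref{lema:Haraux} for this last step, whereas you reprove that lemma inline through the Duhamel formula and the block decomposition of the convolution integral.
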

\begin{proof}
Arguing as in the previous theorem, calling $\bs w(t)=\bs h(t)-\bs h_\infty$, we  get
\begin{multline*}
\frac12\frac{d}{dt}\int_\Omega|\bs w(t)|^2+a_*\int_\Omega|\Rot\bs w(t)|^2
\le\int_\Omega(\bs f(t)-\bs f_\infty)\cdot\bs w(t)+\int_\Gamma(\bs g(t)-\bs g_\infty)\cdot\bs w(t)\\
+\int_\Omega\bigl(\bs a_\infty(x,\Rot\bs h_\infty)-\bs a(x,t,\Rot\bs h_\infty)\bigr)\cdot\Rot\bs w(t),
\end{multline*}
from which we obtain, using H\"older and Young inequalities and the Remark~\ref{PoincareTrace},

\begin{multline*}
\frac12\frac{d}{dt}\int_\Omega\big|\bs w(t)\big|^2+C_1\int_\Omega\big|\bs w(t)\big|^2\\
\le D_1\left(\|\bs f(t)-\bs f_\infty\|_{\bs L^{q'}(\Omega)}^2+\|\bs g(t)-\bs g_\infty\|_{\bs L^{r'}(\Gamma)}^2\right)+D_2\,\zeta(t).
\end{multline*}
So
\begin{equation}\label{2.47}
\frac{d\ }{dt}\int_\Omega\big|\bs w(t)\big|^2+C\int_\Omega\big|\bs w(t)\big|^2\le
l(t)\le l_0,
\end{equation}
where $C=2C_1$,
\begin{equation*}
l(t)=2D_1\xi(t)+2D_2\,\zeta(t),
\end{equation*}
and $l_0$ is a constant which exists by the assumptions on
$\bs a$, $\bs a_\infty$, $\bs f$, $\bs f_\infty$, $\bs g$ and $\bs g_\infty$.

In order to prove that
$\bs w\in L^\infty(0,\infty;\bs L^2(\Omega))$,
we multiply (\ref{2.47}) by $e^{Ct}$ and integrate in time, between
$\sigma$ and $\tau$, $\sigma\le\tau$. Then
\begin{equation}\label{2.48}
 \int_\sigma^\tau\int_\Omega
e^{Ct}\partial_t\big|\bs w(t)\big|^2+C\int_\sigma^\tau\int_\Omega
e^{Ct}\big|\bs w(t)\big|^2\le \int_\sigma^\tau l_0e^{Ct}.
\end{equation}

But
\begin{equation}\label{2.49}
 \int_\sigma^\tau\int_\Omega
e^{Ct}\partial_t\big|\bs w(t)\big|^2=e^{C\tau}\int_\Omega\big|\bs w(\tau)\big|^2-
e^{C\sigma}\int_\Omega\big|\bs w(\sigma)\big|^2-C
\int_\sigma^\tau\int_\Omega e^{Ct}\big|\bs w(t)\big|^2.
\end{equation}

Combining (\ref{2.48}) and (\ref{2.49}) we get
\begin{equation*}
e^{C\tau}\int_\Omega\big|\bs w(\tau)\big|^2
\le\tfrac{l_0}C\left(e^{C\tau}-e^{C\sigma}\right)+
e^{C\sigma}\int_\Omega\big|\bs w(\sigma)\big|^2
\end{equation*}
and taking $\tau=t$ and $\sigma=0$, there exists a positive constant $l_1$ such that,
for all $t$,
\begin{equation*}
\int_\Omega\big|\bs w(t)\big|^2\le\tfrac{l_0}C+\int_\Omega\big|\bs h_0-\bs h_\infty\big|^2\le l_1.
\end{equation*}

Applying Lemma \ref{lema:Haraux} below, fixing $t_0>0$, for all $t>t_0$ we have
\begin{equation*}
\int_\Omega\big|\bs h(t)-\bs h_\infty\big|^2\le
e^{C(t_0-t)}l_1+\tfrac1{1-e^{-C}}\sup_{\tau\ge
t_0}\int_\tau^{\tau+1}l(\sigma)\,d\sigma.
\end{equation*}
\end{proof}

\begin{lemma}[\cite{Haraux1981}, p~286]\label{lema:Haraux}
Let $\phi(t)$ be a nonnegative function, absolutely continuous in any compact interval of $\RR^+$,
$l(t)$ a nonnegative function belonging to  $L^1_{\rm loc}(\RR^+)$ and $c$ a positive function such that
\begin{equation*} 
  \phi'(t)+c\,\phi(t)\leq l(t),\quad \forall t\,\geq 0.
\end{equation*}

Then
\begin{equation*}
\forall\, t_0,t\in\, \RR^+:\, t_0\le t\quad \phi(t)\le
e^{c(t_0-t)}\phi(t_0)+\tfrac1{1-e^{-c}}\sup_{\tau\ge
t_0}\int_{\tau}^{\tau+1} l(\sigma)d\sigma.
\end{equation*}
\end{lemma}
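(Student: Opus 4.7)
The plan is to use an integrating-factor argument and then exploit the geometric decay of $e^{-c}$ to localize the dependence on $l$ to a supremum of unit-interval integrals, in the spirit of Gronwall's inequality.

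First I would rewrite the hypothesis as $(e^{ct}\phi(t))' \le e^{ct}l(t)$ almost everywhere on $\RR^+$, which is legitimate because $\phi$, and hence $e^{ct}\phi(t)$, is absolutely continuous on compact subintervals. Integrating between $t_0$ and $t$ (with $t_0 \le t$) and dividing by $e^{ct}$ gives
\[
\phi(t) \le e^{c(t_0-t)}\phi(t_0) + \int_{t_0}^t e^{c(s-t)}l(s)\,ds.
\]
The first summand already matches the first term in the stated bound, so the work reduces to estimating the integral $I := \int_{t_0}^t e^{c(s-t)}l(s)\,ds$ by $(1-e^{-c})^{-1}M$, where $M := \sup_{\tau \ge t_0}\int_\tau^{\tau+1}l(\sigma)\,d\sigma$.

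For this I would decompose the interval $(t_0,t]$ into unit blocks receding from $t$. Writing $N = \lfloor t-t_0\rfloor$, I split
\[
I = \sum_{k=0}^{N-1}\int_{t-k-1}^{t-k} e^{c(s-t)}l(s)\,ds + \int_{t_0}^{t-N} e^{c(s-t)}l(s)\,ds.
\]
On the $k$-th full block the exponential factor is bounded by $e^{-ck}$, and by the definition of $M$ (applied with $\tau=t-k-1\ge t_0$) the integral of $l$ is bounded by $M$. The last, truncated block satisfies $[t_0,t-N]\subseteq [t_0,t_0+1]$, so $\int_{t_0}^{t-N} l(s)\,ds \le M$ as well, while its exponential factor is bounded by $e^{-cN}$. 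Summing yields $I \le M\sum_{k=0}^{\infty}e^{-ck} = M/(1-e^{-c})$, which closes the proof.

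I do not anticipate a substantive obstacle; the only care required is the bookkeeping of the last (partial) unit interval, which is handled by the inclusion $[t_0,t-N]\subseteq[t_0,t_0+1]$ so that the supremum $M$ still controls its contribution. The rest is a standard integrating-factor computation combined with a geometric series.
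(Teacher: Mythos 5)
Your proof is correct. Note that the paper does not prove this lemma at all---it is quoted from Haraux's book (p.~286)---so there is no internal argument to compare against; your integrating-factor computation followed by the decomposition of $\int_{t_0}^{t}e^{c(s-t)}l(s)\,ds$ into unit blocks receding from $t$, each controlled by $M=\sup_{\tau\ge t_0}\int_{\tau}^{\tau+1}l$ and weighted by the geometric factors $e^{-ck}$, is exactly the standard (and essentially Haraux's own) argument, and your handling of the partial block via $[t_0,t-N]\subseteq[t_0,t_0+1]$ is sound.
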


\subsubsection{The singular case for $\frac65\leq p<2$ and $\bs a(x,t,\bs u)=\bs a_\infty(x,\bs u)=\nu(x) |\bs u|^{p-2}\bs u$}

\begin{theorem}
Let {\em(\ref{a-potential})} hold, $\frac65\leq p<2$ and
$\partial_t \bs g\in L^{\infty}(0,\infty;\bs L^{r'}(\Gamma))$.
Suppose that
\begin{equation*}
\Tende{\displaystyle{\int_t^{t+1}\xi(\tau)\,d\tau} }{t\rightarrow\infty} {0.}
\end{equation*}

Then we have
 \begin{equation*}
  \Tende{\norm{\bs h(t)-\bs h_\infty}_{\bs L^2(\Omega)}}{t\rightarrow\infty}{0}.
 \end{equation*}
\end{theorem}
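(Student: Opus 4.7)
The strategy is to mimic the $p=2$ argument by establishing a linear differential inequality $\phi'(t) + C\phi(t) \le l(t)$, with $\phi(t) = \norm{\bs w(t)}_{\bs L^2(\Omega)}^2$ and $\bs w(t) = \bs h(t) - \bs h_\infty$, and then invoke Lemma~\ref{lema:Haraux} exactly as in the preceding proof. The main difficulty is that for $p<2$ the monotonicity inequality~(\ref{Operador:a:prop:c}') is degenerate, so one cannot directly obtain coercivity in $\Rot\bs w$.

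To set up the estimate, I would subtract (\ref{Maxwell:estacionario:variacional}) from (\ref{Maxwell:evolutivo:variacional}) with the test function $\bs\varphi = \bs w(t)$. Since $\bs a = \bs a_\infty$, the term $\zeta$ vanishes and the right-hand side reduces to $\int_\Omega(\bs f(t)-\bs f_\infty)\cdot\bs w(t) + \int_\Gamma(\bs g(t)-\bs g_\infty)\cdot\bs w(t)$. Applying H\"older and Young as before (with $s=2$ forced by $p<2$, which implicitly requires $\bs f(t) - \bs f_\infty \in \bs L^2(\Omega)$) bounds this by $\tfrac{C_1}{2}\norm{\bs w(t)}_{\bs L^2(\Omega)}^2 + D_1\,\xi(t)$. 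On the left, the singular branch of the monotonicity assumption gives
\begin{equation*}
\int_\Omega\bigl(\bs a(x,\Rot\bs h) - \bs a(x,\Rot\bs h_\infty)\bigr)\cdot\Rot\bs w \ge a_*\int_\Omega\bigl(|\Rot\bs h|+|\Rot\bs h_\infty|\bigr)^{p-2}|\Rot\bs w|^2.
\end{equation*}

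The crucial step is a reverse H\"older trick that promotes this degenerate expression to something comparable with $\norm{\bs w}_{\bs L^2(\Omega)}^2$. Writing $|\Rot\bs w|^p = \bigl[(|\Rot\bs h|+|\Rot\bs h_\infty|)^{p-2}|\Rot\bs w|^2\bigr]^{p/2}\bigl(|\Rot\bs h|+|\Rot\bs h_\infty|\bigr)^{p(2-p)/2}$ and applying H\"older with conjugate exponents $\tfrac{2}{p}$ and $\tfrac{2}{2-p}$ yields
\begin{equation*}
\norm{\Rot\bs w(t)}_{\bs L^p(\Omega)}^{2} \le \bigl(\norm{\Rot\bs h(t)}_{\bs L^p(\Omega)}+\norm{\Rot\bs h_\infty}_{\bs L^p(\Omega)}\bigr)^{2-p}\int_\Omega\bigl(|\Rot\bs h|+|\Rot\bs h_\infty|\bigr)^{p-2}|\Rot\bs w|^2.
\end{equation*}
Combined with the Poincar\'e inequality (\ref{poincare}) applied to $\bs w$ (for which the condition $p\ge\tfrac65$ is precisely what ensures $q\ge 2$), this gives $\int_\Omega(|\Rot\bs h|+|\Rot\bs h_\infty|)^{p-2}|\Rot\bs w|^2 \ge C'\norm{\bs w(t)}_{\bs L^2(\Omega)}^2$ for some $C'>0$, \emph{provided} $\norm{\Rot\bs h(t)}_{\bs L^p(\Omega)}$ is bounded uniformly in $t$.

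Establishing this time-uniform $\bs L^p$ bound is where I expect the main work to lie. The plan is to invoke Proposition~\ref{te_proposition} on arbitrarily large intervals $(0,T)$, observing that the right-hand side of the energy estimate (\ref{te_proposition_estimate}) is controlled independently of $T$ by the standing global hypotheses $\bs f\in L^\infty(0,\infty;\bs L^{q'\vee 2}(\Omega))$, $\bs g,\,\partial_t\bs g\in L^\infty(0,\infty;\bs L^{r'}(\Gamma))$ (the latter being an explicit hypothesis of the theorem), and $\bs h_0\in\Wtilde{p}{\Omega}$ (implicitly assumed). Once the uniform bound is available, the chain of inequalities above produces $\phi'(t) + C\phi(t) \le 2D_1\,\xi(t)$, and Lemma~\ref{lema:Haraux} together with the assumption $\int_t^{t+1}\xi(\tau)\,d\tau\to 0$ yields the conclusion exactly as in the $p=2$ case.
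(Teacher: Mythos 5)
Your proposal is correct and follows essentially the same route as the paper: the degenerate monotonicity term is promoted to $\norm{\Rot\bs w}_{\bs L^p(\Omega)}^2$ by the same H\"older splitting (the paper phrases it as the inverse H\"older inequality of Sobolev, which is exactly your direct H\"older application rearranged), the uniform-in-time bound on $\norm{\Rot\bs h(t)}_{\bs L^p(\Omega)}$ is obtained from Proposition~\ref{te_proposition} with constants independent of $T$ under the stated global hypotheses, and the conclusion follows from the Poincar\'e inequality and Lemma~\ref{lema:Haraux} as in the $p=2$ case. You also correctly identified the implicit assumptions ($\bs h_0\in\Wtilde{p}{\Omega}$, $s=2$ in the definition of $\xi$) that the paper uses without restating.
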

\begin{proof}
By the property (\ref{Operador:a:prop} c'),
\begin{multline*}
\nu\bigl(|\Rot\bs h(t)|^{p-2}\Rot\bs h(t)-|\Rot\bs h_\infty|^{p-2}\Rot\bs h_\infty\bigr)\cdot\bigl(\Rot\bs h(t)-\Rot\bs h_\infty\bigr)\\
\ge a_*\bigl(|\Rot\bs h(t)|+|\Rot\bs h_\infty|\bigr)^{p-2}
|\Rot(\bs h(t)-\bs h_\infty)|^2.
\end{multline*}

Setting $\bs w(t)=\bs h(t)-\bs h_\infty$, recalling (\ref{Maxwell:evo:assimp:5}) and using the above inequality, we obtain
\begin{multline*}
\frac{d\ }{dt}\int_\Omega|\bs w(t)|^2+a_*\int_\Omega\big(|\Rot\bs h(t)|+|\Rot\bs h_\infty|\big)^{p-2}| \Rot\bs w(t)|^2\\
\le \int_\Omega\bigl(\bs f(t)-\bs f_\infty\bigr)\cdot\bs w(t)+
 \int_\Gamma\bigl(\bs g(t)-\bs g_\infty\bigr)\cdot\bs w(t).
\end{multline*}

We recall now the inverse H\"{o}lder inequality (see \cite{Sobolev1963}, p~8): let $0<s<1$ and $s'=\frac{s}{s-1}$. If $F\in L^s(\Omega)$, $FG\in L^1(\Omega)$ and $\displaystyle\int_\Omega
 |G(x)|^{s'}dx<\infty$ then
\begin{equation*}
\left(\int_\Omega|F(x)|^sdx\right)^{\frac1{s}}\le
\int_\Omega|F(x)G(x)|dx\left(\int_{\Omega}|G(x)|^{s'}dx\right)^{\frac{-1}{s'}}
\end{equation*}
and we apply it, with $s=\frac{p}2$ (so $s'=\frac{p}{p-2}$),
$F=|\Rot(\bs h(t)-\bs h_\infty)|^2$ and $G=\big(|\Rot\bs h(t)|+|\Rot\bs h_\infty|\big)^{p-2}$, in $\widehat{\Omega}
 =\{x\in\Omega:|\Rot\bs h(x,t)|+|\Rot\bs h_\infty(x)|\neq 0\}$.

So,
\begin{multline*}
\Bigl(\int_{\widehat{\Omega}}\bigl(|\Rot(\bs h(t)-
\bs h_\infty)|^2\bigr)^{\frac{p}2}\Bigr)^{\frac2{p}}\\
 \le
\int_{\widehat{\Omega}}|\Rot(\bs h(t)-\bs h_\infty)|^2\big(|\Rot\bs h(t)|+|\Rot\bs h_\infty|\big)^{p-2}\,
\Big(\int_{\widehat{\Omega}}\big(|\Rot\bs h(t)|+|\Rot\bs h_\infty|\big)^p\Big)^{\frac{2-p}p}
\end{multline*}
and
\begin{multline}\label{rotp}
\int_{\widehat{\Omega}}|\Rot(\bs h(t)-\bs h_\infty)|^2\big(|\Rot\bs h(t)|+|\Rot\bs h_\infty|\big)^{p-2}\\
\ge\Bigl(\int_{\widehat{\Omega}}|\Rot(\bs h(t)-\bs h_\infty)|^p\Bigr)^{\frac2{p}}\,\Bigl(\int_{\widehat{\Omega}}\big(|\Rot\bs h(t)|+|\Rot\bs h_\infty|\big)^p\Bigr)^{\frac{p-2}p}.
\end{multline}

From (\ref{Maxwell:estacionario:variacional}) and the assumptions we have
\begin{equation*}
\int_\Omega|\Rot\bs h_\infty|^p\le C_1.
\end{equation*}

Simple calculations allows us to rewrite the inequality (\ref{te_proposition_estimate})
in the form
\begin{multline*}
\norm{\partial_t\bs h}^2_{\bs L^2(\Omega\times(0,\infty))} +
\norm{\Rot\bs h}^p_{L^\infty(0,\infty;\bs L^p(\Omega))}\\
\leq
C_2\Big(\|\bs f\|^2_{L^\infty(0,\infty;\bs L^2(\Omega))} +
\|\bs g\|^{p'}_{L^\infty(0,\infty;\bs L^{r'}(\Gamma))}
+\|\partial_t\bs g\|^{p'}_{L^{\infty}(0,\infty;\bs L^{r'}(\Gamma))}\Big)+C_3,
\end{multline*}
where $C_2$ and $C_3$ are positive constants.

We get, using the Proposition~\ref{te_proposition},
\[
 \Big(\int_{\Omega}\big(|\Rot\bs h(t)|+|\Rot\bs h_\infty|\big)^p\Big)^{\frac{2-p}p}\leq C_4,
\]
and, from (\ref{rotp}),
\begin{equation*}
\int_{\hat{\Omega}}\bigl|\Rot(\bs h(t)-\bs h_\infty)\bigr|^2\big(|\Rot\bs h(t)|+|\Rot\bs h_\infty|\big)^{p-2}
\ge\Bigl(\int_{\hat{\Omega}}\bigl|\Rot(\bs h(t)-\bs h_\infty)\bigr|^p\Bigr)^{\frac2{p}}\frac1{C_4}.
\end{equation*}

By the Remark~\ref{PoincareTrace} we know, since $p\geq\frac65$, that
$$\frac12\frac{d\ }{dt}\int_\Omega|\bs w(t)|^2+C_5\int_\Omega|\bs w(t)|^2
\le D_1\left(\|\bs f(t)-\bs f_\infty\|_{\bs L^{2}(\Omega)}^2+\|\bs g(t)-\bs g_\infty\|_{\bs L^{r'}(\Gamma)}^2\right),$$
and so, for $C=2C_5$ and $l(t)=2 D_1\xi(t)$
we deduce that
 $$\frac{d\ }{dt}\int_\Omega|\bs w(t)|^2+C\int_\Omega|\bs w(t)|^2\le l(t),$$
and the proof is concluded exactly as the previous one.
 \end{proof}

\section{A limit problem when $n\rightarrow\infty$}\label{sec3}

Given $p>1$ let $\bs\delta:Q_T\times\RR^3\rightarrow\RR^3$ be a
Carath\'{e}odory function satisfying (\ref{Operador:a:prop:b}),
the monotonicity condition
\begin{equation}\label{delta1}
\big(\bs\delta(x,t,\bs u)-\bs\delta(x,t,\bs v)\big)\cdot (\bs u-\bs v)\ge 0,
\end{equation}
and also
\begin{equation}\label{delta2}
\bs\delta\in\bs L^1(Q_T\times\RR^3),\quad \partial_t\bs\delta\in\bs L^1(Q_T\times\RR^3)\quad\text{and}\quad\bs\delta(x,t,0)=0.
\end{equation}

Let
\begin{equation*}
\mathbb K_*=\big\{\bs v\in\, \W{p}{\Omega}:|\Rot\bs v|\le 1\mbox{ a.e. in }\Omega\big\}
\end{equation*}
and assume that
\begin{equation}\label{fgh}
\bs f \in \bs L^{2}(Q_T),\quad
\bs g \in L^\infty(0,T;\bs L^{1}(\Gamma))\cap W^{1,1}(0,T;\bs L^1(\Gamma))
\quad\text{and}\quad
\bs h_0 \in\K_*.
\end{equation}
For $n\in\mathbb N$, $n>3\vee p$, define
\begin{equation}\label{an}
\bs a_n(x,t,\bs u)=|\Rot \bs u|^{n-2}\Rot \bs u+\bs\delta(x,t,\bs u)
\end{equation}
and consider the following problem: to find $\bs h_n\in L^n(0,T;\W{n}{\Omega})\cap C([0,T];\bs L^2_\sigma(\Omega))$ and
$\partial_t\bs h_n \in L^{n'}(0,T;\W{n}{\Omega}')$ such that, for a.e.\ $t\in(0,T)$,
\begin{gather}\label{Maxwell:evolutivo:variacional:n}
\begin{split}
\int_\Omega\partial_t\bs h_n(t)\cdot\bs\varphi + \int_\Omega |\Rot \bs h_n(t)|^{n-2}&\Rot \bs h_n(t)\cdot\Rot\bs\varphi\\
+\int_\Omega\bs\delta(x,t,\Rot\bs h_n(t))\cdot\Rot\bs\varphi
&=\int_\Omega\bs f(t)\cdot\bs\varphi +\int_\Gamma\bs g(t)\cdot\bs\varphi\quad\forall\bs\varphi\in\W{n}{\Omega},\\
\bs h_n(0)&=\bs h_0.
\end{split}
\end{gather}

We define the variational inequality: to find $\bs h_*\in H^1(0,T;\bs L^2(\Omega))\cap L^2(0,T;\W{\infty}{\Omega})$ such that
 for a.e. $t\in\,(0,T)$, $\bs h_*(t)\in\,\mathbb K_*$,
\begin{gather}\label{iv*}
\begin{split}
\int_\Omega\partial_t \bs h_*(t)\cdot(\bs v-&\bs h_*(t))+\int_\Omega\bs\delta(x,t,\Rot\bs h_*(t))\cdot\Rot(\bs v-\bs h_*(t))\\
&\ge \int_\Omega\, \bs f(t)\cdot(\bs v-\bs h_*(t))+\int_\Gamma\, \bs g(t)\cdot(\bs v-\bs h_*(t))\quad\forall \bs v\in \mathbb K_*,\\
\bs h_*(0)&=\bs h_0.
\end{split}
\end{gather}

\begin{remark}
Note that (\ref{iv*}) has at most one solution and observe that the operator $\bs\delta$ may be the null operator.
\hfill{$\square$}\end{remark}

\begin{proposition}
With the assumptions  {\em (\ref{delta1})}, {\em (\ref{delta2})} and {\em (\ref{fgh})}, let $\bs h_n$ be the solution
of the problem {\em (\ref{Maxwell:evolutivo:variacional:n})}.

Then there exists a positive constant $C$, independent of $n$, such that
\begin{equation}\label{estimates}
\|\bs h_n\|_{L^\infty(0,T;\bs L^2(\Omega))}\le C,\quad \|\Rot\bs h_n\|_{\bs L^n(Q_T)}\le C,\quad
\|\partial_t \bs h_n\|_{\bs L^{2}(Q_T)}\le C.
\end{equation}
\end{proposition}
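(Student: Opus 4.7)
Plan: I would derive all three bounds from two energy estimates --- testing (\ref{Maxwell:evolutivo:variacional:n}) with $\bs h_n(t)$ and then with $\partial_t\bs h_n(t)$ --- in the same spirit as the proofs of Propositions~\ref{Maxwell:evolutivo} and~\ref{te_proposition}, but tracking carefully the $n$-dependence of every constant. The enabling fact is that, fixing once for all some $p_0>3$, Remark~\ref{PoincareTrace} (applied with exponent $p_0$) combined with H\"older's inequality gives
\[
\|\bs v\|_{\bs L^\infty(\Omega)}+\|\bs v\|_{\bs L^\infty(\Gamma)}\le C_{p_0}\,|\Omega|^{1/p_0}\,\|\Rot\bs v\|_{\bs L^n(\Omega)}\qquad\forall\,\bs v\in\W{n}{\Omega},\ n\ge p_0,
\]
a uniform-in-$n$ embedding. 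The second crucial input is that $\bs h_0\in\K_*$ yields $\|\Rot\bs h_0\|_{\bs L^n(\Omega)}^n\le|\Omega|$ for every $n$.

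For the bounds (1) and (2), I would take $\bs h_n(t)$ as test function and integrate over $(0,t)$. By monotonicity of $\bs\delta$ combined with $\bs\delta(x,t,\bs 0)=\bs 0$, the term $\int_{Q_t}\bs\delta(\cdot,s,\Rot\bs h_n)\cdot\Rot\bs h_n$ is nonnegative and can be discarded. The $\bs f$-contribution is absorbed by Cauchy--Schwarz and Gronwall; for the $\bs g$-contribution the uniform embedding above gives
\[
\left|\int_{\Sigma_t}\bs g\cdot\bs h_n\right|\le C\int_0^t\|\bs g(s)\|_{\bs L^1(\Gamma)}\|\Rot\bs h_n(s)\|_{\bs L^n(\Omega)}\,ds,
\]
and a scaled Young's inequality with exponents $(n',n)$ absorbs a small fraction of $\|\Rot\bs h_n\|_{\bs L^n(Q_T)}^n$ into the left-hand side, leaving only an $O(\|\bs g\|_{L^\infty(0,T;\bs L^1(\Gamma))}^{n'})$ residual, uniformly bounded in $n$ since $n'\to 1$ as $n\to\infty$. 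Gronwall's lemma then delivers (1) and (2) simultaneously.

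For (3) I would test with $\partial_t\bs h_n(t)$, formally in a Galerkin scheme. The pure curl term integrates exactly to $\tfrac1n\|\Rot\bs h_n(t)\|_{\bs L^n(\Omega)}^n-\tfrac1n\|\Rot\bs h_0\|_{\bs L^n(\Omega)}^n$; the initial piece is at most $|\Omega|/n$ thanks to $\bs h_0\in\K_*$, and the current-time piece is nonnegative and is kept. The boundary integral is handled by integration by parts in time exactly as in the proof of Proposition~\ref{te_proposition}, producing terms involving $\bs g(t),\bs g(0)$ and $\partial_t\bs g$ paired with $\bs h_n$, all controllable through the uniform embedding and the already-established bound~(2). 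The $\bs f$-term is absorbed via Cauchy--Schwarz into half of $\int_{Q_t}|\partial_t\bs h_n|^2$. The delicate point --- and what I expect to be the main obstacle --- is the perturbation term $\int_{Q_t}\bs\delta(\cdot,s,\Rot\bs h_n)\cdot\partial_s\Rot\bs h_n$, which has no obvious time-primitive. I would attempt to handle it through the potential structure implicit in (\ref{delta1})--(\ref{delta2}): writing $\bs\delta=\partial_{\bs u}\Phi$ with $\Phi\ge 0$ and $\Phi(x,t,\bs 0)=0$, and using $\partial_t\bs\delta\in\bs L^1$ to bound $\int_{Q_t}\partial_s\Phi(\cdot,s,\Rot\bs h_n)$ uniformly, one reduces the integral to an initial datum controlled by $|\Rot\bs h_0|\le 1$ plus a manifestly bounded quantity; in the absence of such a potential one would fall back on a monotonicity-based discrete-time (Minty-type) comparison of $\bs h_n$ with its time translate.
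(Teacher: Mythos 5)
Your treatment of the first two bounds coincides with the paper's: test with $\bs h_n$, discard $\int_{Q_t}\bs\delta(\cdot,\cdot,\Rot\bs h_n)\cdot\Rot\bs h_n\ge0$ (which follows from (\ref{delta1}) with $\bs v=\bs0$ together with $\bs\delta(x,t,\bs 0)=\bs 0$), and control the source terms through a uniform-in-$n$ embedding followed by Young's inequality with exponents $(n',n)$, whose residual stays bounded because $n'\to1$. The skeleton of your third estimate --- testing with $\partial_t\bs h_n$ in a Galerkin scheme, the exact primitive $\tfrac1n\|\Rot\bs h_n(t)\|^n_{\bs L^n(\Omega)}-\tfrac1n\|\Rot\bs h_0\|^n_{\bs L^n(\Omega)}$ with $\|\Rot\bs h_0\|^n_{\bs L^n(\Omega)}\le|\Omega|$, and the integration by parts in time of the boundary term --- is also exactly the paper's.

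The gap is precisely where you flagged it. Your primary plan for $\int_{Q_t}\bs\delta(\cdot,s,\Rot\bs h_n)\cdot\partial_s\Rot\bs h_n$ postulates $\bs\delta=\partial_{\bs u}\Phi$ with $\Phi\ge0$, but nothing in (\ref{delta1})--(\ref{delta2}) provides a potential (a monotone map of $\RR^3$ need not be a gradient --- its symmetric part being positive semidefinite does not make it symmetric), so this step does not follow from the stated hypotheses; and your fallback via a Minty-type comparison with time translates is not developed far enough to be checkable. The paper's device is more elementary and is the reason the otherwise odd-looking assumption $\partial_t\bs\delta\in\bs L^1(Q_T\times\RR^3)$ appears in (\ref{delta2}): integrate the term by parts in $t$ directly, writing
$\int_{Q_t}\bs\delta\cdot\partial_s\Rot\bs h_n=\int_\Omega\bs\delta(x,t,\Rot\bs h_n(t))\cdot\Rot\bs h_n(t)-\int_\Omega\bs\delta(x,0,\Rot\bs h_0)\cdot\Rot\bs h_0-\int_{Q_t}\partial_s\bs\delta\cdot\Rot\bs h_n$;
the endpoint term at time $t$ is nonnegative by the same sign property used in the first estimate and is dropped, the term at time $0$ is controlled by $|\Rot\bs h_0|\le1$ and $\bs\delta(\cdot,0,\cdot)\in\bs L^1$, and the last term is controlled by $\|\partial_t\bs\delta\|_{\bs L^1(Q_T\times\RR^3)}$ together with the bound already obtained on $\bs h_n$. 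Replacing your potential-structure argument by this integration by parts closes the proof along the lines you already laid out.
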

\begin{proof} Choosing $\bs h_n$ as test function in (\ref{Maxwell:evolutivo:variacional:n}) and using the Remark~\ref{PoincareTrace}, we obtain
\begin{alignat*}{1}
\tfrac12\int_\Omega|\bs h_n(t)|^2&+\int_0^t\int_\Omega|\Rot\bs h_n|^n+\int_0^t\int_\Omega\bs\delta(x,t,\Rot\bs h_n)\cdot\Rot\bs h_n\\
&=\int_0^t\int_\Omega\bs f\cdot\bs h_n+\int_0^t\int_\Gamma\bs g\cdot\bs h_n+\tfrac12\int_\Omega|\bs h_0|^2\\
&\le \tfrac{C_1}{n'}\left(\int_0^t\int_\Omega|\bs f|+\int_0^t\int_\Gamma|\bs g|\right)+\tfrac1n\int_0^t\int_\Omega|\Rot\bs h_n|^n+\tfrac12\int_\Omega|\bs h_0|^2,
\end{alignat*}
where $C_1$ is a positive constant independent of $n$.

Since, for a.e.\ $t\in(0,T)$,
\begin{equation*}
\bs\delta(x,t,\Rot\bs h_n(t))\cdot\Rot\bs h_n(t)\ge 0,
\end{equation*}
the first two inequalities of (\ref{estimates}) follow immediately.

On the other hand, formally we have from (\ref{Maxwell:evolutivo:variacional:n}), with $\bs\varphi=\partial_t\bs h_n(t)$,
\begin{multline*}
\int_\Omega|\partial_t \bs h_n(t)|^2+\int_\Omega |\Rot \bs h_n(t)|^{n-2}\Rot \bs h_n(t)\cdot\partial_t\Rot\bs h_n(t)\\
+\int_\Omega \bs\delta(x,t,\bs h_n(t))\cdot\partial_t\Rot\bs h_n(t)
=\int_\Omega\, \bs f(t)\cdot\partial_t \bs h_n(t)+\int_\Gamma\, \bs g(t)\cdot\partial_t \bs h_n(t).
\end{multline*}

Since
\begin{multline*}
\int_0^t\int_\Omega\bs\delta(x,t,\Rot\bs h_n)\cdot\partial_t\Rot \bs h_n
=\int_\Omega \bs\delta(x,t,\Rot\bs h_n(t))\cdot\Rot \bs h_n(t)\\
-\int_\Omega \bs\delta(x,0,\Rot\bs h_n(0))\cdot\Rot\bs h_n(0)-\int_0^t\int_\Omega \partial_t \bs\delta(x,t,\Rot\bs h_n)\cdot \bs h_n\\
\geq -\int_\Omega \bs\delta(x,0,\Rot\bs h_0)\cdot\Rot\bs h_0-\int_0^t\int_\Omega \partial_t \bs\delta(x,t,\Rot\bs h_n)\cdot \bs h_n,
\end{multline*}
and, on the other hand,
\begin{equation}\label{gdth}
\int_0^t\int_\Gamma \bs g\cdot\partial_t \bs h_n=\int_\Gamma  \bs g(t)\cdot\bs h_n(t)-\int_\Gamma\bs g(0)\cdot\bs h_0-
\int_0^t\int_\Gamma\partial_t\bs g\cdot\bs h_n,
\end{equation}
\begin{equation}\label{gdth2}
\int_\Gamma  \bs g(t)\cdot\bs h_n(t)\le C_2\Big(\int_\Gamma|\bs g(t)|\Big)^{n'}+\tfrac1n\int_\Omega|\Rot\bs h_n(t)|^n.
\end{equation}
we have
\begin{multline*}
\tfrac12\int_0^t\int_\Omega\left|\partial_t \bs h_n\right|^2
\le
\tfrac1n\|\Rot\bs h_0\|^n_{\bs L^n(\Omega)}+
\norm{\bs \delta(\cdot,0,\cdot)}_{\bs L^1(\Omega\times\RR^3)}\norm{\Rot\bs h_0}_{\bs L^\infty(\Omega)}\\
+\|\partial_t \bs\delta\|_{\bs L^1(Q_T\times\RR^3)}\|\bs h_n\|_{\bs L^\infty(Q_T)}
+
\tfrac12\|\bs f\|^2_{\bs L^{2}(Q_T)}\\
+C_2\|\bs g\|^{n'}_{L^\infty(0,T;\bs L^1(\Gamma))}
+\norm{\bs g(0)}_{\bs L^1(\Gamma)}\norm{\bs h_0}_{\bs L^\infty(\Gamma)}
+\norm{\partial_t\bs g}_{\bs L^1(Q_T)}\norm{\bs h_n}_{\bs L^\infty(Q_T)}
\end{multline*}
and so $\{\partial_t\bs h_n\}_n$ is uniformly bounded in $\bs L^2(Q_T)$.
\end{proof}

\begin{theorem}
Let $\bs h_n$ be the solution of the problem {\em (\ref{Maxwell:evolutivo:variacional:n})}.
Then, at least for subsequences, we have
\begin{alignat*}{2}
\bs h_n\lra&\ \bs h_*&&\mbox{ in }C([0,T];\bs L^2(\Omega))\mbox{-strong},\\
\partial_t\bs h_n\lraup&\ \partial_t\bs h_* &&\mbox{ in }\bs L^2(Q_T)\mbox{-weak},\\
\Rot\bs h_n\lraup&\ \Rot\bs h_*&\quad &\mbox{ in }\bs L^q(Q_T)\mbox{-weak},
\end{alignat*}
for any fixed $3<q<\infty$, where $\bs h_*$ is the solution of the problem {\em (\ref{iv*})}.
\end{theorem}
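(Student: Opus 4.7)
The plan is to combine the uniform estimates (\ref{estimates}) with classical compactness and a Minty-type monotonicity trick on $\bs\delta$. Since $|Q_T|<\infty$, H\"older gives $\|\Rot\bs h_n\|_{\bs L^q(Q_T)}\le|Q_T|^{1/q-1/n}\|\Rot\bs h_n\|_{\bs L^n(Q_T)}\le C|Q_T|^{1/q}$ for every $n\ge q$, so by Remark \ref{PoincareTrace} the sequence $\bs h_n$ is uniformly bounded in $L^q(0,T;\W{q}{\Omega})$ for each finite $q$. A diagonal argument yields a subsequence along which the three weak convergences of the statement hold for every fixed $q>3$. Aubin--Lions with the compact embedding $\W{q}{\Omega}\hookrightarrow\hookrightarrow\bs L^2(\Omega)$ and the uniform $\bs L^2(Q_T)$-bound on $\partial_t\bs h_n$ promotes this to strong convergence $\bs h_n\to\bs h_*$ in $C([0,T];\bs L^2(\Omega))$; in particular $\bs h_*(0)=\bs h_0$. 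Weak lower semicontinuity of the $\bs L^q$-norm yields $\|\Rot\bs h_*\|_{\bs L^q(Q_T)}\le C|Q_T|^{1/q}$, and sending $q\to\infty$ forces $|\Rot\bs h_*|\le 1$ a.e., so $\bs h_*(t)\in\K_*$ for a.e.\ $t$.

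To pass to the limit in the equation, fix $\bs v\in\K_*$ (a valid test function in $\W{n}{\Omega}$ for every $n$, since $|\Rot\bs v|\le 1$ and $|\Omega|<\infty$) and test (\ref{Maxwell:evolutivo:variacional:n}) with $\bs v-\bs h_n$, integrating in $t$. Young's inequality with exponents $n$ and $\tfrac{n}{n-1}$ combined with $|\Rot\bs v|\le 1$ gives the crucial bound
\[
\int_0^T\!\!\int_\Omega |\Rot\bs h_n|^{n-2}\Rot\bs h_n\cdot\Rot(\bs v-\bs h_n)\le\frac{T|\Omega|}{n}\longrightarrow 0,
\]
so the $n$-curl contribution vanishes in the limit. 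The monotonicity assumption (\ref{delta1}) replaces $\bs\delta(\cdot,\cdot,\Rot\bs h_n)$ by $\bs\delta(\cdot,\cdot,\Rot\bs v)$ at the cost of an inequality, and the latter, bounded in $\bs L^\infty(Q_T)$ by (\ref{Operador:a:prop:b}) and $|\Rot\bs v|\le 1$, pairs with the weak $\Rot$-convergence. The time-derivative term uses $\int_0^T\!\int_\Omega\partial_t\bs h_n\cdot\bs h_n=\tfrac12(\|\bs h_n(T)\|_{\bs L^2}^2-\|\bs h_0\|_{\bs L^2}^2)$ together with the strong $C([0,T];\bs L^2)$-convergence; $\int\bs f\cdot\bs h_n$ passes by strong $\bs L^2(Q_T)$-convergence. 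For the boundary integral, a second Aubin--Lions step using $\W{q}{\Omega}\hookrightarrow\hookrightarrow C(\overline\Omega)$ for $q>3$ yields strong trace convergence in $L^2(0,T;\bs L^\infty(\Gamma))$, which pairs with $\bs g\in L^\infty(0,T;\bs L^1(\Gamma))\subset L^2(0,T;\bs L^1(\Gamma))$.

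The inequality so obtained contains $\bs\delta(\cdot,\cdot,\Rot\bs v)$ instead of $\bs\delta(\cdot,\cdot,\Rot\bs h_*)$. The Minty--Browder trick now recovers (\ref{iv*}): for $\bs w\in\K_*$ and $\lambda\in(0,1)$, substitute $\bs v=\bs h_*+\lambda(\bs w-\bs h_*)\in\K_*$ (valid by convexity of $\K_*$), divide by $\lambda$, and let $\lambda\to 0$ by dominated convergence, using that $|\Rot\bs h_*|,|\Rot\bs w|\le 1$ combined with (\ref{Operador:a:prop:b}) provides an $\bs L^\infty$-bound on the integrand. This gives the integrated form of (\ref{iv*}), and the standard localization in $t$ through characteristic functions yields the pointwise-a.e.\ statement.

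The main obstacle is the nonlinear $\bs\delta$-term: only the weak convergence of $\Rot\bs h_n$ is available, so the Minty--Browder argument is essential. A secondary technical point is the compactness needed to transfer the boundary integral to the limit, given that $\bs g$ is merely in $\bs L^1(\Gamma)$; the Aubin--Lions step into $C(\overline\Omega)$ supplies the required strong trace convergence, and uniqueness for (\ref{iv*}) (noted in the preceding Remark) upgrades subsequential convergence to convergence of the full sequence.
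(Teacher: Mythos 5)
Your strategy is essentially the paper's — uniform estimates, compactness, elimination of the $n$-curl term, monotonicity of $\bs\delta$ plus a Minty argument, localization in $t$ — but there is one genuine gap, at the step showing that the limit satisfies the constraint. You write $\|\Rot\bs h_n\|_{\bs L^q(Q_T)}\le C|Q_T|^{1/q}$ with a generic constant $C$ coming from (\ref{estimates}), pass to the $\liminf$ to get $\|\Rot\bs h_*\|_{\bs L^q(Q_T)}\le C|Q_T|^{1/q}$, and claim that sending $q\to\infty$ ``forces $|\Rot\bs h_*|\le 1$''. It does not: since $C|Q_T|^{1/q}\to C$, this only yields $|\Rot\bs h_*|\le C$ a.e., which identifies the wrong convex set unless $C\le 1$. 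What is actually needed — and what the paper's proof uses — is that the a priori estimate bounds $\int_{Q_T}|\Rot\bs h_n|^n$ by a constant independent of $n$, so that $\|\Rot\bs h_n\|_{\bs L^n(Q_T)}\le C^{1/n}$ and hence $\|\Rot\bs h_n\|_{\bs L^q(Q_T)}\le |Q_T|^{1/q-1/n}C^{1/n}$; taking $\liminf_n$ then gives $\|\Rot\bs h_*\|_{\bs L^q(Q_T)}\le |Q_T|^{1/q}$ with constant exactly $1$, and only then does $q\to\infty$ yield $|\Rot\bs h_*|\le 1$. Since membership of $\bs h_*(t)$ in $\K_*$ is the whole point of identifying the limit problem as the constrained variational inequality, the $n$-th root must be made explicit; as written the step fails.

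The rest is correct, and your handling of the $n$-curl term is a genuine (and slightly cleaner) variant: instead of using monotonicity of $\bs u\mapsto|\bs u|^{n-2}\bs u$ to replace $\Rot\bs h_n$ by $\Rot\bs\varphi$ for test functions with $|\Rot\bs\varphi|<1$ strictly and then extending to $\K_*$ by density (the paper's route), you bound $\int_{Q_T}|\Rot\bs h_n|^{n-2}\Rot\bs h_n\cdot\Rot(\bs v-\bs h_n)\le |Q_T|/n$ directly by Young's inequality, which is valid for every $\bs v\in\K_*$ and removes the density step. Your more explicit treatment of the boundary term via the compact embedding of $\bs W^{1,q}(\Omega)$ into $\bs C(\overline\Omega)$ for $q>3$, and the concluding Minty argument with $\bs v=\bs h_*+\lambda(\bs w-\bs h_*)$, coincide with what the paper does or leaves implicit.
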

\begin{proof}
By the uniform estimates in (\ref{estimates}) we only need to check that $\bs h_*$ solves (\ref{iv*}).

Let $\bs\varphi\in\W{p}{\Omega}$ be such that $|\Rot\bs\varphi|<1$ a.e.. Taking $\bs\varphi-\bs h_n$ as a test function in (\ref{Maxwell:evolutivo:variacional:n}), we have
\begin{multline*}
\int_{Q_T}\partial_t\bs h_n\cdot(\bs\varphi-\bs h_n)+\int_{Q_T}|\Rot\bs h_n|^{n-2}\Rot\bs h_n\cdot\Rot(\bs\varphi-\bs h_n)\\
+\int_{Q_T}\bs\delta(x,t,\Rot\bs h_n)\cdot\Rot(\bs\varphi-\bs h_n)=\int_{Q_T}\bs f\cdot(\bs\varphi-\bs h_n)+\int_{\Sigma_T}\bs g\cdot(\bs\varphi-\bs h_n).
\end{multline*}

By the monotonicity of the operator $\bs a_n$ defined in (\ref{an}) we have
\begin{multline}\label{liminf1}
\int_{Q_T}\partial_t\bs h_n\cdot(\bs\varphi-\bs h_n)+\int_{Q_T}|\Rot\bs \varphi|^{n-2}\Rot\bs \varphi\cdot\Rot(\bs\varphi-\bs h_n)\\
+\int_{Q_T}\bs\delta(x,t,\Rot\bs \varphi)\cdot\Rot(\bs\varphi-\bs h_n)\ge\int_{Q_T}\bs f\cdot(\bs\varphi-\bs h_n)+\int_{\Sigma_T}\bs g\cdot(\bs\varphi-\bs h_n).
\end{multline}

Applying limit in $n$ to both members of (\ref{liminf1}) we get
\begin{multline}\label{h*}
\int_{Q_T}\partial_t\bs h_*\cdot(\bs\varphi-\bs h_*)+
\int_{Q_T}\bs\delta(x,t,\Rot\bs \varphi)\cdot\Rot(\bs\varphi-\bs h_*)\\
\ge\int_{Q_T}\bs f\cdot(\bs\varphi-\bs h_*)+\int_{\Sigma_T}\bs g\cdot(\bs\varphi-\bs h_*).
\end{multline}

Since $\bs\varphi$ is an arbitrary function of $\W{p}{\Omega}$ satisfying $|\Rot\bs\varphi|<1$, the  inequality (\ref{h*}) still holds,
by density, for all $\bs\varphi\in\mathbb K_*$. We also have $\bs h_*(0)=\bs h_0$.

Given $p<q<n$, by (\ref{estimates}),
\begin{equation*}
\norm{\Rot\bs h_n}_{\bs L^q(Q_T)}
\le |Q_T|^{\frac1q-\frac1n}\norm{\Rot\bs h_n}_{\bs L^n(Q_T)}\le  |Q_T|^{\frac1q-\frac1n}C^{\frac1n}
\end{equation*}
and
\begin{equation*}
\norm{\Rot \bs h_*}_{\bs L^q(Q_T)}
\le\liminf_n\|\Rot\bs h_n\|_{\bs L^q(Q_T)}\le |Q_T|^{\frac1q}\qquad\forall\, q>p,
\end{equation*}
so $\Rot \bs h_*\in\bs L^\infty(Q_T)$ and
$\|\Rot \bs h_*\|_{\bs L^\infty(Q_T)}\le 1$ which proves that
 $\bs h_*(t)$ belongs to the convex set $\K_*$ for a.e.\ $t\in(0,T)$.

Choosing $\bs \varphi=\bs h_*+\lambda(\bs w-\bs h_*)$, with $\lambda\in\,(0,1]$ and $\bs w$ any element of $\mathbb K_*$, we have
\begin{multline*}
\int_{Q_T}\partial_t\bs h_*\cdot(\bs w-\bs h_*)
+\int_{Q_T}\bs\delta(x,t,\Rot\bs h_*+\lambda(\bs w-\bs h_*))\cdot\Rot(\bs w-\bs h_*)\\
\ge\int_{Q_T}\bs f\cdot(\bs w-\bs h_*)+\int_{\Sigma_T}\bs g\cdot(\bs w-\bs h_*).
\end{multline*}

Letting $\lambda\rightarrow 0$, we get
\begin{multline*}
\int_{Q_T}\partial_t\bs h_*\cdot(\bs w-\bs h_*)
+\int_{Q_T}\bs\delta(x,t,\Rot\bs h_*)\cdot\Rot(\bs w-\bs h_*)\\
\ge\int_{Q_T}\bs f\cdot(\bs w-\bs h_*)+\int_{\Sigma_T}\bs g\cdot(\bs w-\bs h_*).
\end{multline*}

Standard arguments imply that, for a.e.\ $t\in(0,T)$,
\begin{multline*}
\int_\Omega\partial_t\bs h_*(t)\cdot(\bs w-\bs h_*(t))
+\int_\Omega\bs\delta(x,t,\Rot\bs h_*(t))\cdot\Rot(\bs w-\bs h_*(t))\\
\ge\int_\Omega\bs f\cdot(\bs w-\bs h_*(t))+\int_\Sigma\bs g\cdot(\bs w-\bs h_*(t)).
\end{multline*}
\end{proof}

\begin{remark}
If $\bs\delta=\bs\delta(x,\bs u)$ is independent of $t$ (in particular $\bs\delta=\bs0$) in the corresponding stationary problem
(\ref{Maxwell:evolutivo:variacional:n}) with stationary data $\bs f(x,t)=\bs f_\infty(x)$ and $\bs g(x,t)=\bs g_\infty(x)$, i.e.
\begin{multline*}
\int_\Omega |\Rot \bs h_{n\infty}|^{n-2}\Rot \bs h_{n\infty}\cdot\Rot\bs\varphi\\
+\int_\Omega\bs\delta(x,\Rot\bs h_{n\infty})\cdot\Rot\bs\varphi
=\int_\Omega\bs f_\infty\cdot\bs\varphi +\int_\Gamma\bs g_\infty\cdot\bs\varphi\quad\forall\bs\varphi\in\W{n}{\Omega},
\end{multline*}
it was shown in \cite{MirandaRodriguesSantos2009} that there exists subsequences $n'\rightarrow\infty$ and $\bs h_{*\infty}\in\K_*$
such that
\begin{equation}\label{hn'}
\bs h_{n'\infty}\lraup\bs h_{*\infty}\quad\text{in}\quad\W{q}{\Omega}\text{-weak},\quad \text{for}\quad n'\rightarrow\infty,
\end{equation}
for any fixed $3<q<\infty$,  where $h_{*\infty}$ is a solution in $\K_*$ of
\begin{multline}\label{deltah*}
\int_\Omega\bs\delta(x,\Rot\bs h_{*\infty})\cdot\Rot(\bs\varphi-\bs h_{*\infty})\\
\ge\int_\Omega\bs f_\infty\cdot(\bs\varphi-\bs h_{*\infty}) +
\int_\Gamma\bs g_\infty\cdot(\bs\varphi-\bs h_{*\infty})\quad\forall\bs\varphi\in\K_*.
\end{multline}

In general, (\ref{deltah*}) may have more than one solution if $\bs\delta$ is not strictly monotone,
in particular when $\bs\delta=\bs0$.
\hfill{$\square$}\end{remark}

\begin{remark}
If we apply Theorem~\ref{Maxwell:evo:assimp} for each fixed $n>3$ with
\begin{equation*}
\Tende{
\theta(t)=\displaystyle\int_{\frac{t}{2}}^t \big(\norm{\bs f(\tau)-\bs f_\infty}_{\bs L^1(\Omega)}+
\norm{\bs g(\tau)-\bs g_\infty}_{\bs L^1(\Gamma)}\big)\,d\tau
}{t\rightarrow\infty}{0}
\end{equation*}
we have the estimate
\begin{equation*}
\int_\Omega|\bs h_n(t)-\bs h_{n\infty}|^2\leq \big(C(n-2)t\big)^{\frac{-2}{n-2}}+ \theta(t),
\end{equation*}
where the constant $C>0$ is independent of $n$ and $t$.

So, for a subsequence $n'$ satisfying (\ref{hn'}), there exists a sequence, $t_{n'}\rightarrow\infty$, such that
\begin{equation*}
\bs h_{n'}(t_{n'})\lraup\bs h_{*\infty}\quad\text{in}\quad\bs L^{2}(\Omega)\text{-weak},\quad \text{for}\quad n'\rightarrow\infty.
\end{equation*}

An interesting open question in the degenerate case is whether there exists a sequence $t_{n}\rightarrow\infty$ such that
$\bs h_*(t_{n})$ converges, in some sense, to $\bs h_{*\infty}$.
\hfill{$\square$}\end{remark}

\section{The variational inequality with evolutionary curl constraint}\label{sec4}

Define, for a.e.\ $t\in(0,T)$, the following closed convex subset of $\W{p}{\Omega}$,
\begin{equation*}
 \K(t)=\big\{\bs v\in\W{p}{\Omega} : |\Rot\bs v|\leq\Psi(t),\text{ a.e. in }\Omega\big\},
\end{equation*}
where $\Psi:Q_T\longrightarrow\RR^+$ is a function such that $\Psi\ge\alpha>0$.

In this section we assume the following regularity of the data:
\begin{equation*}
\begin{split}
\bs f\in \bs L^{q'\vee2}(Q_T),\quad
\bs g\in L^\infty(0,T;\bs L^{r'}(\Gamma))\cap W^{1,r'}(0,T;\bs L^{r'}(\Gamma)),\\
\nu\in L^\infty(\Omega),\ 0<a_*\leq\nu\leq a^*,\quad\Psi \in W^{1,\infty}(0,T;L^\infty(\Omega))\quad \text{and}\quad
\bs h_0\in\K(0).
\end{split}
\end{equation*}

We define the variational inequality: to find $\bs h$, in a suitable class of functions,
such that
\begin{equation}\label{iv}
\begin{split}
\bs h(t)\in \K(t), \text{ for a.e. }t\in\,(0,T),\quad
\bs h(0)=\bs h_0,\\
\int_\Omega\partial_t \bs h(t)\cdot(\bs\varphi-\bs h(t))+
\int_\Omega\nu|\Rot\bs h(t)|^{p-2}\Rot\bs h(t)\cdot\Rot(\bs \varphi-\bs h(t))\\
\ge \int_\Omega\, \bs f(t)\cdot(\bs\varphi-\bs h(t))+
\int_\Gamma\, \bs g(t)\cdot(\bs\varphi-\bs h(t)),\\
\qquad\forall \bs \varphi\in \K(t),
\text{ for a.e. }t\in\,(0,T).
\end{split}
\end{equation}

\subsection{The approximated problem}

Following a natural constraint penalization also used in a similar scalar parabolic problem \cite{Santos1991,Santos2002},
we introduce a small positive parameter $\varepsilon<1$.

Let us consider a continuous bounded increasing function $k_\varepsilon:\RR\longrightarrow\RR^+$ , satisfying
\begin{equation*}
k_\varepsilon(s)=
\begin{cases}
 1,&\text{ if }s\leq 0,\\
 e^{\frac{s}{\varepsilon}},&\text{ if }\varepsilon\leq s\leq \frac{1}{\varepsilon}-\varepsilon,\\
 e^{\frac{1}{\varepsilon^2}},&\text{ if } s\geq \frac{1}{\varepsilon}.
\end{cases}
\end{equation*}
and define, for $(x,t)\in Q_T$ and $\bs u\in\RR^3$,
\begin{equation*}
\bs a(x,t,\bs u)=\nu(x) k_\varepsilon\big(|\bs u|^p-\Psi^p(x,t)\big)|\bs u|^{p-2}\bs u.
\end{equation*}

The operator $A$, as defined in (\ref{operador:A:def}) with this $\bs a$, is bounded, monotone, coercive and hemicontinuous and so, by Proposition~\ref{Maxwell:evolutivo}, for each $\varepsilon>0$,
the approximated problem
\begin{equation}\label{ProblemaAproximado}
\begin{split}
\int_\Omega\partial_t\bs h_\varepsilon(t) \cdot\bs\varphi\ +&
\int_\Omega \nu k_\varepsilon\big(|\Rot \bs h_\varepsilon(t)|^p-\Psi(t)^p\big)|\Rot \bs h_\varepsilon(t)|^{p-2}\Rot \bs h_\varepsilon(t)\cdot\Rot\bs\varphi\\
=&\int_\Omega\bs f(t)\cdot\bs\varphi
+ \int_\Gamma\bs g(t)\cdot\bs\varphi\quad\forall\bs\varphi\in\Wtilde{p}{\Omega},\ \text{for a.e. }t\in\,(0,T),\\
\bs h_\varepsilon(0)=&\ \bs h_0,
\end{split}
\end{equation}
has a unique solution, $\bs h_\varepsilon\in L^p(0,T;\Wtilde{p}{\Omega})\cap C([0,T];\bs L^2_\sigma(\Omega))$, satisfying the estimate~(\ref{estimate}), independently of $\varepsilon$.
Since $k_\varepsilon(s)\geq1$, we have the following lemma.

\begin{lemma}\label{PApEstRoth}
There is a positive constant $C$ such that, for all $0<\varepsilon<1$,
\begin{equation*}
\norm{\bs h_\varepsilon}_{L^\infty(0,T;\bs L^2(\Omega))}+\norm{\Rot{\bs h_\varepsilon}}_{\bs L^p(Q_T)}\leq C.
\end{equation*}
\end{lemma}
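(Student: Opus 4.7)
The plan is to repeat the energy estimate used in the proof of Proposition~\ref{Maxwell:evolutivo}, noting that the weight $k_\ep$ only helps us because it is bounded below by $1$. I would test the variational equation (\ref{ProblemaAproximado}) against $\bs\varphi=\bs h_\ep(t)$ itself, which is admissible since $\bs h_\ep(t)\in\Wtilde{p}{\Omega}$ for a.e.\ $t$, and integrate over $(0,t)$. The time derivative produces $\tfrac12\|\bs h_\ep(t)\|^2_{\bs L^2(\Omega)}-\tfrac12\|\bs h_0\|^2_{\bs L^2(\Omega)}$, while the nonlinear term becomes
\[
\int_0^t\!\!\int_\Omega \nu\, k_\ep\bigl(|\Rot\bs h_\ep|^p-\Psi^p\bigr)\,|\Rot\bs h_\ep|^p
\;\ge\; a_*\int_{Q_t}|\Rot\bs h_\ep|^p,
\]
since $\nu\ge a_*$ and $k_\ep\ge 1$ everywhere. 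This is the only point where the specific structure of $k_\ep$ intervenes, and it is precisely what makes the estimate uniform in $\ep$.

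The right-hand side is then handled exactly as in Proposition~\ref{Maxwell:evolutivo}: H\"older's inequality gives
\[
\left|\int_{Q_t}\bs f\cdot\bs h_\ep\right|\le \|\bs f\|_{\bs L^{q'}(Q_T)}\|\bs h_\ep\|_{\bs L^q(Q_T)},\qquad
\left|\int_{\Sigma_t}\bs g\cdot\bs h_\ep\right|\le \|\bs g\|_{\bs L^{r'}(\Sigma_T)}\|\bs h_\ep\|_{\bs L^r(\Sigma_T)},
\]
and by the Sobolev and trace inequalities (\ref{poincare})--(\ref{trace}) of Remark~\ref{PoincareTrace}, both $\|\bs h_\ep\|_{\bs L^q(Q_T)}$ and $\|\bs h_\ep\|_{\bs L^r(\Sigma_T)}$ are controlled by $\|\Rot\bs h_\ep\|_{\bs L^p(Q_T)}$. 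Young's inequality with exponents $(p,p')$ then lets me absorb a small multiple of $\|\Rot\bs h_\ep\|^p_{\bs L^p(Q_T)}$ into the coercive term on the left.

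Putting everything together yields, uniformly in $\ep\in(0,1)$,
\[
\tfrac12\|\bs h_\ep(t)\|^2_{\bs L^2(\Omega)}+\tfrac{a_*}{2}\|\Rot\bs h_\ep\|^p_{\bs L^p(Q_t)}
\le C\bigl(\|\bs f\|^{p'}_{\bs L^{q'}(Q_T)}+\|\bs g\|^{p'}_{\bs L^{r'}(\Sigma_T)}+\|\bs h_0\|^2_{\bs L^2(\Omega)}\bigr),
\]
and taking the supremum over $t\in[0,T]$ gives both bounds claimed in the lemma. There is really no substantial obstacle here: the only nontrivial observation is the lower bound $k_\ep\ge 1$, after which the argument is just the energy estimate (\ref{estimate}) reread with an additional harmless multiplicative factor inside the integrand, so the constants do not deteriorate as $\ep\to 0$.
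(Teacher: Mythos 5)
Your proposal is correct and is essentially the paper's own argument: the paper simply observes that since $k_\varepsilon\ge 1$ the approximating operator satisfies the coercivity condition (\ref{Operador:a:prop:a}) with the same constant $a_*$ for every $\varepsilon$, so the energy estimate (\ref{estimate}) of Proposition~\ref{Maxwell:evolutivo} holds uniformly in $\varepsilon$. You have merely written out that energy estimate explicitly, correctly identifying that only the lower bound on $k_\varepsilon$ (not the $\varepsilon$-dependent upper bound) enters.
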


\subsection{Existence of solution of the variational inequality}
In order to prove that a subsequence of the solutions of the approximate problems converges, with $\epsilon\rightarrow0$, for the solution of the variational inequality, we need additional a priori estimates.

\begin{lemma}\label{PApEstk}
There is a positive constant $C$ such that, for $0<\varepsilon<1$,
\begin{equation*}
\norm{k_\varepsilon(|\Rot{\bs h_\varepsilon}|^p- \Psi^p)}_{L^1(Q_T)}\leq C.
\end{equation*}
\end{lemma}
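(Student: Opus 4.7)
The plan is to test the approximate equation \eqref{ProblemaAproximado} with $\bs\varphi=\bs h_\varepsilon(t)$ itself in order to obtain a uniform bound on the weighted integral $\int_{Q_T} k_\varepsilon\bigl(|\Rot\bs h_\varepsilon|^p-\Psi^p\bigr)|\Rot\bs h_\varepsilon|^p$, and then to convert this into a bound on $\int_{Q_T}k_\varepsilon(\cdot)$ by a cut-off at the level $\alpha>0$ furnished by the hypothesis $\Psi\ge\alpha$.

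First I would take $\bs\varphi=\bs h_\varepsilon(t)\in\Wtilde{p}{\Omega}$ in \eqref{ProblemaAproximado} and integrate in time from $0$ to $T$. This yields
\begin{equation*}
\tfrac12\|\bs h_\varepsilon(T)\|_{\bs L^2(\Omega)}^2+a_*\int_{Q_T}k_\varepsilon\bigl(|\Rot\bs h_\varepsilon|^p-\Psi^p\bigr)|\Rot\bs h_\varepsilon|^p\le\int_{Q_T}\bs f\cdot\bs h_\varepsilon+\int_{\Sigma_T}\bs g\cdot\bs h_\varepsilon+\tfrac12\|\bs h_0\|_{\bs L^2(\Omega)}^2,
\end{equation*}
where I used $\nu\ge a_*$ and dropped the nonnegative term on the left. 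Next I would apply H\"older together with the Sobolev and trace embeddings of Remark~\ref{PoincareTrace} to estimate the right-hand side by $\|\bs f\|_{\bs L^{q'}(Q_T)}\|\Rot\bs h_\varepsilon\|_{\bs L^p(Q_T)}+\|\bs g\|_{\bs L^{r'}(\Sigma_T)}\|\Rot\bs h_\varepsilon\|_{\bs L^p(Q_T)}+\tfrac12\|\bs h_0\|^2_{\bs L^2(\Omega)}$, and then invoke the uniform bound on $\|\Rot\bs h_\varepsilon\|_{\bs L^p(Q_T)}$ from Lemma~\ref{PApEstRoth}. The outcome is a constant $C_0$, independent of $\varepsilon\in(0,1)$, such that
\begin{equation*}
\int_{Q_T}k_\varepsilon\bigl(|\Rot\bs h_\varepsilon|^p-\Psi^p\bigr)|\Rot\bs h_\varepsilon|^p\le C_0.
\end{equation*}

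Now I would split $Q_T$ into the two measurable regions $A=\{|\Rot\bs h_\varepsilon|\ge\alpha\}$ and $B=Q_T\setminus A$. On $B$ one has $|\Rot\bs h_\varepsilon|^p<\alpha^p\le\Psi^p$, so the argument of $k_\varepsilon$ is strictly negative and $k_\varepsilon\bigl(|\Rot\bs h_\varepsilon|^p-\Psi^p\bigr)=1$; consequently $\int_B k_\varepsilon\,dxdt\le|Q_T|$. On $A$ we have $|\Rot\bs h_\varepsilon|^p\ge\alpha^p$, hence
\begin{equation*}
k_\varepsilon\bigl(|\Rot\bs h_\varepsilon|^p-\Psi^p\bigr)\le\frac1{\alpha^p}\,|\Rot\bs h_\varepsilon|^p\,k_\varepsilon\bigl(|\Rot\bs h_\varepsilon|^p-\Psi^p\bigr),
\end{equation*}
which when integrated over $A$ gives, by the previous display, $\int_A k_\varepsilon\,dxdt\le C_0/\alpha^p$. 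Summing the two pieces produces the claimed uniform estimate with $C=|Q_T|+C_0/\alpha^p$.

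The only genuine subtlety is the justification of $\bs h_\varepsilon(t)$ as an admissible test function in the duality pairing $\langle\partial_t\bs h_\varepsilon,\cdot\rangle$ and of the identity $\int_0^T\langle\partial_t\bs h_\varepsilon,\bs h_\varepsilon\rangle=\tfrac12\|\bs h_\varepsilon(T)\|^2-\tfrac12\|\bs h_0\|^2$; this follows from the regularity $\bs h_\varepsilon\in L^p(0,T;\Wtilde{p}{\Omega})\cap C([0,T];\bs L^2_\sigma(\Omega))$ with $\partial_t\bs h_\varepsilon\in L^{p'}(0,T;\Wtilde{p}{\Omega}')$ already established in Proposition~\ref{Maxwell:evolutivo}, or from the Galerkin level before passing to the limit, exactly as in the proof of Proposition~\ref{te_proposition}. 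Once this is in place the argument is routine; the essential structural point is that the lower bound $\Psi\ge\alpha>0$ gives the leverage needed to trade a factor of $|\Rot\bs h_\varepsilon|^p$ for a constant on the set where the penalization $k_\varepsilon$ is genuinely large.
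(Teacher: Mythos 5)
Your argument is correct, and its first half coincides with the paper's: both test (\ref{ProblemaAproximado}) with $\bs\varphi=\bs h_\varepsilon$, drop the nonnegative kinetic term, and use H\"older, the embeddings of Remark~\ref{PoincareTrace} and Lemma~\ref{PApEstRoth} to arrive at the uniform weighted bound $\int_{Q_T}k_\varepsilon\bigl(|\Rot\bs h_\varepsilon|^p-\Psi^p\bigr)|\Rot\bs h_\varepsilon|^p\le C_0$, which is exactly the paper's estimate (\ref{PAEstRoth2}). Where you diverge is in converting this weighted bound into a bound on $\int_{Q_T}k_\varepsilon$. The paper writes $k_\varepsilon\le k_\varepsilon\,\Psi^p/\alpha^p$ everywhere, splits $\Psi^p=(\Psi^p-|\Rot\bs h_\varepsilon|^p)+|\Rot\bs h_\varepsilon|^p$, and controls the first piece through the auxiliary inequality (\ref{PAEstk2}), i.e.\ $\int_{Q_T}k_\varepsilon(\cdot)\bigl(|\Rot\bs h_\varepsilon|^p-\Psi^p\bigr)\ge-\int_{Q_T}\Psi^p$, which rests on the sign properties $k_\varepsilon(s)=1$ for $s\le0$ and $k_\varepsilon(s)s\ge0$ for $s\ge0$. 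You instead partition $Q_T$ at the level $|\Rot\bs h_\varepsilon|=\alpha$: where $|\Rot\bs h_\varepsilon|<\alpha\le\Psi$ the penalization is identically $1$ and contributes at most $|Q_T|$, and where $|\Rot\bs h_\varepsilon|\ge\alpha$ you absorb $k_\varepsilon$ into the weighted integral via $k_\varepsilon\le\alpha^{-p}|\Rot\bs h_\varepsilon|^p k_\varepsilon$. This is a cleaner route: it bypasses (\ref{PAEstk2}) entirely, uses only the single sign property $k_\varepsilon(s)=1$ for $s\le0$, and produces the explicit constant $|Q_T|+C_0/\alpha^p$; the paper's version, on the other hand, keeps everything as one global integral inequality. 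Both deliver the same conclusion, and your remark on justifying $\bs h_\varepsilon(t)$ as a test function (via the regularity from Proposition~\ref{Maxwell:evolutivo} or at the Galerkin level) addresses the only technical point the paper leaves implicit.
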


\begin{proof}
Choosing in (\ref{ProblemaAproximado}) $\bs\varphi=\bs h_\varepsilon$, we obtain, for a positive constant $C_1$,
\begin{multline}\label{PAEstRoth2}
a_*\int_{Q_T}k_\varepsilon(|\Rot\bs h_\varepsilon|^p-\Psi^p)|\Rot\bs h_\varepsilon|^p\\
\leq C_1\left(\norm{\bs f}_{\bs L^{q'}(Q_T)}^{p'}+
\norm{\bs g}_{\bs L^{r'}(\Sigma_T)}^{p'}+
\norm{\bs h_0}^2_{\bs L^{2}(\Omega)} \right).
\end{multline}
Observing that $k_\varepsilon(s)=1$ for $s\leq0$ and $k_\varepsilon(s)s\geq0$ for $s\geq0$,
we have
\begin{equation}\label{PAEstk2}
\begin{split}
\int_{Q_T}k_\varepsilon(|\Rot&\bs h_\varepsilon|^p-\Psi^p)(|\Rot\bs h_\varepsilon|^p-\Psi^p)\\
&=\int_{\{|\Rot\bs h_\varepsilon|^p-\Psi^p\leq0\}}\hspace{-15mm}k_\varepsilon(|\Rot\bs h_\varepsilon|^p-\Psi^p)(|\Rot\bs h_\varepsilon|^p-\Psi^p)+\\
&\qquad\qquad\int_{\{|\Rot\bs h_\varepsilon|^p-\Psi^p>0\}}\hspace{-15mm}k_\varepsilon(|\Rot\bs h_\varepsilon|^p-\Psi^p)(|\Rot\bs h_\varepsilon|^p-\Psi^p)\\
&\geq \int_{\{|\Rot\bs h_\varepsilon|^p-\Psi^p\leq0\}}\hspace{-15mm}|\Rot\bs h_\varepsilon|^p-
\int_{\{|\Rot\bs h_\varepsilon|^p-\Psi^p\leq0\}}\hspace{-15mm}\Psi^p\\
&\geq -\int_{Q_T}\Psi^p.
\end{split}
\end{equation}

Recalling that $\Psi\geq\alpha> 0$ we obtain
\begin{align*}
\int_{Q_T}k_\varepsilon (|\Rot\bs h_\varepsilon|^p-\Psi^p)&\leq
\int_{Q_T}k_\varepsilon (|\Rot\bs h_\varepsilon|^p-\Psi^p)\frac{\Psi^p}{\alpha^p}\\
& =\tfrac{1}{\alpha^p}\int_{Q_T}k_\varepsilon(|\Rot\bs h_\varepsilon|^p-\Psi^p)%
(\Psi^p-|\Rot\bs h_\varepsilon|^p)\\
&\qquad\qquad+\tfrac{1}{\alpha^p}\int_{Q_T}k_\varepsilon(|\Rot\bs h_\varepsilon|^p-\Psi^p)%
|\Rot\bs h_\varepsilon|^p.
\end{align*}

Applying the relations (\ref{PAEstRoth2}) and (\ref{PAEstk2}) to the last inequality we have
\begin{multline*}
\int_{Q_T}k_\varepsilon (|\Rot\bs h_\varepsilon|^p-\Psi^p)\\
\leq
\tfrac{C_1}{a_*\alpha^p}\left(\norm{\bs f}_{\bs L^{q'}(Q_T)}^{p'}+
\norm{\bs g}_{\bs L^{r'}(\Sigma_T)}^{p'}+
\norm{\bs h_0}_{\bs L^{2}(\Omega)}\right) +\tfrac{1}{\alpha^p}\norm{\Psi^p}_{L^p(\Omega)}^p.
\end{multline*}
\end{proof}

\begin{lemma}\label{PAEstdth}
There is a positive constant $C$ such that, for $0<\varepsilon<1$,
\begin{equation*}
\norm{\partial_t\bs h_\varepsilon}_{\bs L^2(Q_T)}\leq C.
\end{equation*}
\end{lemma}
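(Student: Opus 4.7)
The plan is to mimic the strategy used in Proposition~\ref{te_proposition}: formally take $\partial_t \bs h_\varepsilon(t)$ as test function in the approximated problem~(\ref{ProblemaAproximado}) (justified rigorously through Galerkin approximations), and extract a uniform-in-$\varepsilon$ bound for $\|\partial_t \bs h_\varepsilon\|_{\bs L^2(Q_T)}$ after integrating in time. The immediate obstacle is that the penalized nonlinearity $k_\varepsilon(|\Rot\bs h_\varepsilon|^p-\Psi^p)|\Rot\bs h_\varepsilon|^{p-2}\Rot\bs h_\varepsilon \cdot \partial_t\Rot\bs h_\varepsilon$ is not manifestly an exact time derivative because of the $t$-dependence hidden in $\Psi$, so we cannot directly integrate it away as in the $\Psi$-free case.

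To circumvent this, introduce the primitive $K_\varepsilon(s)=\int_0^s k_\varepsilon(\tau)\,d\tau$ and observe that
\[
k_\varepsilon(|\Rot\bs h_\varepsilon|^p-\Psi^p)|\Rot\bs h_\varepsilon|^{p-2}\Rot\bs h_\varepsilon\cdot\partial_t\Rot\bs h_\varepsilon
=\tfrac{1}{p}\partial_t K_\varepsilon(|\Rot\bs h_\varepsilon|^p-\Psi^p)+\tfrac{1}{p}k_\varepsilon(|\Rot\bs h_\varepsilon|^p-\Psi^p)\,\partial_t(\Psi^p).
\]
After multiplying by $\nu$ and integrating over $Q_t$, the first term becomes a boundary-in-time contribution. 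The key sign remarks are that $K_\varepsilon$ is nondecreasing with $K_\varepsilon(s)=s$ for $s\le 0$ and $K_\varepsilon(s)\ge 0$ for $s\ge 0$, so
\[
\int_\Omega \tfrac{\nu}{p}K_\varepsilon(|\Rot\bs h_\varepsilon(t)|^p-\Psi(t)^p)\ge -\tfrac{a^*}{p}\|\Psi\|_{L^\infty(Q_T)}^p,
\]
while $\bs h_0\in\K(0)$ implies $K_\varepsilon(|\Rot\bs h_0|^p-\Psi(0)^p)=|\Rot\bs h_0|^p-\Psi(0)^p\le 0$, giving a nonpositive initial contribution on the left-hand side.

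The residual term $\tfrac{1}{p}\int_{Q_t}\nu k_\varepsilon(|\Rot\bs h_\varepsilon|^p-\Psi^p)\,\partial_t(\Psi^p)$ is controlled using $\Psi\in W^{1,\infty}(0,T;L^\infty(\Omega))$, which bounds $\partial_t(\Psi^p)$ in $L^\infty(Q_T)$, together with Lemma~\ref{PAEstk} which gives a uniform $L^1(Q_T)$ bound on $k_\varepsilon(|\Rot\bs h_\varepsilon|^p-\Psi^p)$. The $\bs f$-term is handled by Cauchy--Schwarz and Young's inequalities since $\bs f\in\bs L^2(Q_T)$, absorbing a fraction of $\|\partial_t\bs h_\varepsilon\|_{\bs L^2(Q_T)}^2$ into the left-hand side. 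For the boundary contribution, I integrate by parts in time exactly as in~(\ref{gdth})--(\ref{gdth2}):
\[
\int_0^t\int_\Gamma\bs g\cdot\partial_t\bs h_\varepsilon=\int_\Gamma\bs g(t)\cdot\bs h_\varepsilon(t)-\int_\Gamma\bs g(0)\cdot\bs h_0-\int_0^t\int_\Gamma\partial_t\bs g\cdot\bs h_\varepsilon,
\]
and bound each piece via the trace inequality~(\ref{trace}) together with the uniform estimate of Lemma~\ref{PApEstRoth} on $\|\Rot\bs h_\varepsilon\|_{\bs L^p(Q_T)}$ and on $\|\bs h_\varepsilon\|_{L^\infty(0,T;\bs L^2(\Omega))}$, using the assumed regularity $\bs g\in L^\infty(0,T;\bs L^{r'}(\Gamma))\cap W^{1,r'}(0,T;\bs L^{r'}(\Gamma))$.

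Collecting all these estimates yields an inequality of the form
\[
\tfrac{1}{2}\|\partial_t\bs h_\varepsilon\|_{\bs L^2(Q_T)}^2\le C,
\]
where $C$ depends only on the data and, crucially, not on $\varepsilon$. The main technical subtlety is the handling of the $K_\varepsilon$-term at time $t$: its lower bound is not immediate in sign because $K_\varepsilon$ can be negative where the constraint is not saturated, and it is precisely the hypothesis $\bs h_0\in\K(0)$ together with $\Psi\ge\alpha>0$ that allows the two time-boundary contributions to be absorbed into constants independent of $\varepsilon$.
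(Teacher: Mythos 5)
Your overall strategy is exactly the paper's: test with $\partial_t\bs h_\varepsilon$ via Galerkin, introduce the primitive $K_\varepsilon$ (the paper's $\phi_\varepsilon$), split the penalized term into an exact time derivative plus the residual $k_\varepsilon(\cdot)\Psi^{p-1}\partial_t\Psi$, control that residual through Lemma~\ref{PApEstk} and $\Psi\in W^{1,\infty}(0,T;L^\infty(\Omega))$, use $\bs h_0\in\K(0)$ for the initial contribution, and integrate the boundary term by parts in time. There is, however, one step that does not go through as you state it. In your lower bound for the time-boundary contribution you keep only $\int_\Omega \tfrac{\nu}{p}K_\varepsilon\big(|\Rot\bs h_\varepsilon(t)|^p-\Psi(t)^p\big)\ge -\tfrac{a^*}{p}\,|\Omega|\,\norm{\Psi}^p_{L^\infty(Q_T)}$ and discard the positive part; but since $\nu K_\varepsilon(s)\ge \nu s$, the same computation also yields $\tfrac{a_*}{p}\norm{\Rot\bs h_\varepsilon(t)}^p_{\bs L^p(\Omega)}$ on the left-hand side, and this term is indispensable. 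Indeed, after the integration by parts in time, the piece $\int_\Gamma\bs g(t)\cdot\bs h_\varepsilon(t)$ requires, via the trace inequality (\ref{trace}), a bound on $\norm{\Rot\bs h_\varepsilon(t)}_{\bs L^p(\Omega)}$ at the fixed time $t$, uniformly in $t$ and $\varepsilon$; Lemma~\ref{PApEstRoth} only controls the time-integrated quantity $\norm{\Rot\bs h_\varepsilon}_{\bs L^p(Q_T)}$ and the $L^\infty(0,T;\bs L^2(\Omega))$ norm, neither of which controls the trace of $\bs h_\varepsilon(t)$ on $\Gamma$. The paper (see (\ref{PAEstdth2}) and (\ref{PAEstdth6})) keeps $\tfrac{a_*}{p}\norm{\Rot\bs h_\varepsilon(t)}^p_{\bs L^p(\Omega)}$ precisely so as to absorb, by Young's inequality, the $\tfrac1p\norm{\Rot\bs h_\varepsilon(t)}^p_{\bs L^p(\Omega)}$ produced by that boundary piece. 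With this single correction your argument coincides with the paper's proof.
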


\begin{proof}
Using Galerkin approximations, we can use $\partial_t\bs h_\varepsilon(t)$ formally as test function in
equation (\ref{ProblemaAproximado}). Integrating that equation over $(0,t)$ and setting
\begin{equation*}
\phi_\varepsilon(s)=\int_0^s k_\varepsilon(\tau)\,d\tau,
\end{equation*}
we have
\begin{multline}\label{PAEstdth1}
\int_{Q_t}|\partial_t\bs h_\varepsilon|^2 +
\tfrac1p\int_{Q_t}\nu\, \partial_t\Big(\phi_\varepsilon\big(|\Rot \bs h_\varepsilon|^p-\Psi^p\big)\Big)\\
+\int_{Q_t}\nu k_\varepsilon (|\Rot\bs h_\varepsilon|^p-\Psi^p)\Psi^{p-1}\partial_t\Psi
=\int_{Q_t}\bs f\cdot\partial_t\bs h_\varepsilon
+ \int_{\Sigma_t}\bs g\cdot\partial_t\bs h_\varepsilon.
\end{multline}

Observing that $\phi_\varepsilon(s)=s$, if $s\leq0$, and $\phi_\varepsilon(s)\geq s$, if $s\geq0$,

\begin{multline}\label{PAEstdth2}
 \tfrac1p\int_{Q_t}\nu\, \partial_t\Big(\phi_\varepsilon\big(|\Rot \bs h_\varepsilon|^p-\Psi^p\big)\Big)\\
=
\tfrac1p\int_\Omega\nu \phi_\varepsilon\big(|\Rot \bs h_\varepsilon(t)|^p-\Psi(t)^p\big)-
\tfrac1p\int_\Omega\nu \phi_\varepsilon\big(|\Rot \bs h_0|^p-\Psi(0)^p\big)\\
\geq \tfrac{a_*}{p}\norm{\Rot \bs h_\varepsilon(t)}^p_{\bs L^p(\Omega)}-\tfrac{a^*}{p}\norm{\Psi(t)}^p_{L^p(\Omega)}.
\end{multline}

The H\"older inequality allows us to obtain

\begin{multline}\label{PAEstdth3}
\int_{Q_t}\nu k_\varepsilon (|\Rot\bs h_\varepsilon|^p-\Psi^p)\Psi^{p-1}\partial_t\Psi\\
\leq a^*\norm{k_\varepsilon (|\Rot\bs h_\varepsilon|^p-\Psi^p)}_{L^1(Q_T}
\norm{\Psi^{p-1}}_{L^\infty(Q_T)}
\norm{\partial_t\Psi}_{L^\infty(Q_T)}
\end{multline}
and
\begin{equation}\label{PAEstdth4}
\int_{Q_t}\bs f\cdot\partial_t\bs h_\varepsilon
\leq \norm{\bs f}_{\bs L^2(Q_T)} \norm{\partial_t\bs h_\varepsilon}_{\bs L^2(Q_T)}.
\end{equation}

Arguing as in (\ref{gdth}) and (\ref{gdth2}) we have
\begin{multline}\label{PAEstdth6}
\int_{\Sigma_t} \bs g\cdot\partial_t\bs h_\varepsilon
=
\int_\Gamma \bs g(t)\cdot\bs h_\varepsilon(t)-
\int_\Gamma \bs g(0)\cdot\bs h_0-
\int_{\Sigma_t} \partial_t\bs g\cdot\bs h_\varepsilon\\
\leq
\tfrac{C_1}{p'}\norm{\bs g}^{p'}_{L^\infty(0,T;\bs L^{r'}(\Gamma))}+\frac1p\norm{\Rot\bs h_\varepsilon(t)}^p_{\bs L^p(\Omega)}
+
\norm{\bs g(0)}_{\bs L^{r'}(\Gamma)}\norm{\bs h_0}_{\bs L^p(\Gamma)}\\
+C_1\norm{\partial_t\bs g}_{\bs L^{r'}(\Sigma_T)}\norm{\Rot\bs h_\varepsilon}_{\bs L^p(Q_T)}.
\end{multline}

Using the Proposition \ref{te_proposition}, the relations (\ref{PAEstdth2}-\ref{PAEstdth6}) in the equality (\ref{PAEstdth1}) we obtain the
lemma.
\end{proof}

\begin{theorem}
The variational inequality {\em(\ref{iv})} has a unique solution $\bs h$ belonging to
$L^p(0,T;\W{\infty}{\Omega})\cap H^1(0,T;\bs L^2(\Omega))$.
\end{theorem}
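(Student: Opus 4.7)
The plan is to pass to the limit $\varepsilon\to 0$ in the penalized problem (\ref{ProblemaAproximado}), exploiting the three uniform estimates of Lemmas~\ref{PApEstRoth}, \ref{PApEstk} and~\ref{PAEstdth}. From those bounds, extracting a subsequence (not relabeled), I find $\bs h\in L^p(0,T;\Wtilde{p}{\Omega})$ with $\partial_t\bs h\in\bs L^2(Q_T)$ such that $\bs h_\varepsilon\rightharpoonup\bs h$ in $L^p(0,T;\Wtilde{p}{\Omega})$, $\partial_t\bs h_\varepsilon\rightharpoonup\partial_t\bs h$ in $\bs L^2(Q_T)$, and, by Aubin--Lions via the compact embedding $\Wtilde{p}{\Omega}\hookrightarrow\hookrightarrow\bs L^2(\Omega)$, $\bs h_\varepsilon\to\bs h$ in $C([0,T];\bs L^2(\Omega))$, so that $\bs h(0)=\bs h_0$.

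I next establish the constraint $|\Rot\bs h|\le\Psi$ a.e.\ in $Q_T$, which also yields the $L^p(0,T;\W{\infty}{\Omega})$ regularity. For $\delta>0$ fixed, on $F_\delta^\varepsilon=\{|\Rot\bs h_\varepsilon|^p-\Psi^p>\delta\}$ one has $k_\varepsilon\ge e^{\delta/\varepsilon}$, so Lemma~\ref{PApEstk} forces $|F_\delta^\varepsilon|\le C\,e^{-\delta/\varepsilon}\to 0$. Splitting $\Rot\bs h_\varepsilon=\chi_{Q_T\setminus F_\delta^\varepsilon}\Rot\bs h_\varepsilon+\chi_{F_\delta^\varepsilon}\Rot\bs h_\varepsilon$, the second summand is bounded in $\bs L^p(Q_T)$ with support of vanishing measure and hence tends weakly to $0$, while the first is bounded by $(\Psi^p+\delta)^{1/p}$ a.e.; taking a weak-$\ast$ limit on the first summand I obtain $|\Rot\bs h|\le(\Psi^p+\delta)^{1/p}$ a.e., and letting $\delta\to 0$ concludes.

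The variational inequality is derived by Minty's trick. For $\bs\varphi\in\Wtilde{p}{\Omega}$ with $|\Rot\bs\varphi|\le\Psi$ one has $k_\varepsilon(|\Rot\bs\varphi|^p-\Psi^p)=1$, so testing (\ref{ProblemaAproximado}) with $\bs\varphi-\bs h_\varepsilon$ and using the monotonicity of the penalized operator gives
\begin{multline*}
\int_{Q_T}\partial_t\bs h_\varepsilon\cdot(\bs\varphi-\bs h_\varepsilon)
+\int_{Q_T}\nu|\Rot\bs\varphi|^{p-2}\Rot\bs\varphi\cdot\Rot(\bs\varphi-\bs h_\varepsilon)\\
\ge\int_{Q_T}\bs f\cdot(\bs\varphi-\bs h_\varepsilon)+\int_{\Sigma_T}\bs g\cdot(\bs\varphi-\bs h_\varepsilon).
\end{multline*}
Each term passes to the limit (the first via the weak-strong pairing of $\partial_t\bs h_\varepsilon$ and $\bs h_\varepsilon$ in $\bs L^2$, the second by weak convergence of $\Rot\bs h_\varepsilon$ against the fixed $\bs L^{p'}$ function $\nu|\Rot\bs\varphi|^{p-2}\Rot\bs\varphi$, and the boundary term by the trace inequality~(\ref{trace})). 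Minty's lemma in reverse --- taking $\bs\varphi=\bs h+\lambda(\bs\psi-\bs h)$ for $\bs\psi\in\K(t)$ and sending $\lambda\downarrow 0$, dominated convergence being justified by $|\Rot\bs\varphi|\le\Psi\in L^\infty$ --- together with a Lebesgue-point argument using $\bs\psi(t)=\chi_I(t)\bs v(t)+(1-\chi_I(t))\bs h(t)$ for arbitrary $I\subset(0,T)$ localizes to the pointwise-$t$ form~(\ref{iv}).

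Uniqueness is immediate: testing the inequality satisfied by two solutions $\bs h_1,\bs h_2$ against each other and summing, the external terms cancel and the monotonicity of $\bs u\mapsto\nu|\bs u|^{p-2}\bs u$ yields $\tfrac12\tfrac{d}{dt}\norm{\bs h_1-\bs h_2}_{\bs L^2(\Omega)}^2\le 0$, so the common initial datum $\bs h_0$ forces $\bs h_1\equiv\bs h_2$. I expect the \emph{constraint-transfer step} to be the main obstacle: passing from the measure-smallness of $F_\delta^\varepsilon$ to a pointwise bound on the weak limit $\Rot\bs h$ requires the above splitting argument rather than just weak lower semicontinuity. A secondary delicacy is the admissibility of time-dependent test functions when localizing the integrated inequality, since $\K(t)$ varies with $\Psi(t)$ and one must verify that the modifications $\bs h+\lambda(\bs\psi-\bs h)$ and $\chi_I\bs v+(1-\chi_I)\bs h$ remain in $\K(t)$ pointwise.
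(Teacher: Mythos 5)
Your argument is correct and follows the same overall route as the paper: penalization, the uniform estimates of Lemmas~\ref{PApEstRoth}, \ref{PApEstk} and~\ref{PAEstdth}, Minty's trick using $k_\varepsilon(|\Rot\bs\varphi|^p-\Psi^p)=1$ for $\bs\varphi\in\K(t)$, time localization, and uniqueness by monotonicity. The one genuinely different ingredient is the constraint-transfer step, which you correctly identified as the crux. The paper splits $Q_T$ into three sets $A_\varepsilon$, $B_\varepsilon$, $C_\varepsilon$ with thresholds $\sqrt\varepsilon$ and $1/\varepsilon$ and estimates $\int_{Q_T}(|\Rot\bs h|-\Psi)^+$ directly via a Fatou-type inequality applied to the truncations $(|\Rot\bs h_\varepsilon|-\Psi)\wedge\tfrac1\varepsilon\vee\sqrt\varepsilon$; your version fixes a single threshold $\delta$, extracts the exponential measure bound $|F_\delta^\varepsilon|\le Ce^{-\delta/\varepsilon}$ from Lemma~\ref{PApEstk}, and then uses that $\chi_{F_\delta^\varepsilon}\Rot\bs h_\varepsilon\rightharpoonup 0$ (bounded in $\bs L^p$, supports of vanishing measure) together with the weak closedness of the convex set $\{\bs v:|\bs v|\le(\Psi^p+\delta)^{1/p}\text{ a.e.}\}$. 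What your route buys is that the pointwise bound on the weak limit is obtained purely from convexity and weak closedness, whereas the paper's interchange of $\liminf$ with the integral against a weakly convergent sequence is the more delicate step of its lemma; the price is the extra limit $\delta\to0$. One small caution: your strong convergence $\bs h_\varepsilon\to\bs h$ in $C([0,T];\bs L^2(\Omega))$ via Aubin--Lions needs the compactness of $\W{p}{\Omega}\hookrightarrow\bs L^2(\Omega)$, hence $p>\frac65$; the paper only claims $C([0,T];\bs L^1(\Omega))$-strong convergence and handles the term $\int_{Q_T}\partial_t\bs h_\varepsilon\cdot\bs h_\varepsilon$ by writing it as $\tfrac12\|\bs h_\varepsilon(T)\|^2_{\bs L^2(\Omega)}-\tfrac12\|\bs h_0\|^2_{\bs L^2(\Omega)}$ and invoking weak lower semicontinuity of the norm, which works for all $p>1$; you should do the same if you want the full range of exponents.
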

\begin{proof} By the Lemmas \ref{PApEstRoth}, \ref{PApEstk} and \ref{PAEstdth} and well-known compactness results (see \cite{Simon1987}), there exists a subsequence $\varepsilon\rightarrow0$ such that
\begin{alignat*}{2}
\bs h_\varepsilon&\lraup\bs h&\quad&\mbox{in }L^\infty(0,T;\bs L^2(\Omega))\text{-weak}*\ \text{and}\ C([0,T];\bs L^1(\Omega))\text{-strong},\\
\Rot\bs h_\varepsilon&\lraup\Rot\bs h&\quad&\mbox{in }\bs L^p(Q_T)\mbox{-weak},\\
\partial_t\bs h_\varepsilon&\lraup\partial_t\bs h&\quad&\mbox{in }\bs L^2(Q_T)\mbox{-weak}.
\end{alignat*}

By the monotonicity of $k_\varepsilon$, choosing $\bs\varphi\in\K(t)$ we obtain
\begin{multline*}
\int_\Omega \nu k_\varepsilon\big(|\Rot \bs h_\varepsilon(t)|^p-\Psi(t)^p\big)|\Rot \bs h_\varepsilon(t)|^{p-2}\Rot \bs h_\varepsilon(t)\cdot\Rot(\bs\varphi-
\bs h_\varepsilon(t))\\
\leq \int_\Omega \nu |\Rot \bs \varphi|^{p-2}\Rot \bs \varphi\cdot\Rot(\bs\varphi-
\bs h_\varepsilon(t)).
\end{multline*}

Choosing in (\ref{ProblemaAproximado}) for test function $\bs\varphi-\bs h_\varepsilon(t)$, being $\bs\varphi\in\K(t)$ and
integrating in time, we obtain
\begin{multline*}
\int_{Q_T}\partial_t\bs h_\varepsilon \cdot(\bs\varphi-\bs h_\varepsilon) +
\int_{Q_T} \nu |\Rot \bs\varphi|^{p-2}\Rot \bs\varphi\cdot\Rot(\bs\varphi-\bs h_\varepsilon)\\
\geq\int_{Q_T}\bs f\cdot(\bs\varphi-\bs h_\varepsilon)
+ \int_{\Sigma_T}\bs g\cdot(\bs\varphi-\bs h_\varepsilon).
\end{multline*}

Noting that
\begin{align*}
\liminf_{\varepsilon\rightarrow0}\int_{Q_T}\partial_t\bs h_\varepsilon \cdot(\bs\varphi-\bs h_\varepsilon)
&=\liminf_{\varepsilon\rightarrow0}\bigg(\int_{Q_T}\partial_t\bs h_\varepsilon \cdot\bs\varphi -
\frac12\int_\Omega |\bs h_\varepsilon(t)|^2+\frac12\int_\Omega |\bs h_0|^2\bigg)\\
&\leq \int_{Q_T}\partial_t \bs h\cdot(\bs\varphi-\bs h),
\end{align*}
we get
\begin{multline*}
\int_{Q_T}\partial_t\bs h \cdot(\bs\varphi-\bs h) +
\int_{Q_T} \nu |\Rot \bs\varphi|^{p-2}\Rot \bs\varphi\cdot\Rot(\bs\varphi-\bs h_\varepsilon)\\
\geq\int_{Q_T}\bs f\cdot(\bs\varphi-\bs h)
+ \int_{\Sigma_T}\bs g\cdot(\bs\varphi-\bs h).
\end{multline*}

Assuming that $\bs h(t)\in\K(t)$ for a.e. $t\in\,(0,T)$ (this fact will be proved in the next lemma),
applying a variant of Minty's Lemma and standard arguments, we conclude
\begin{multline*}
\int_\Omega\partial_t\bs h(t)\cdot(\bs\varphi-\bs h)+ \int_\Omega \nu |\Rot \bs h|^{p-2}\Rot \bs h(t)\cdot\Rot(\bs\varphi-
\bs h(t))\\
\ge \int_\Omega\bs f(t)\cdot(\bs\varphi-\bs h(t))
+ \int_\Gamma\bs g(t)\cdot(\bs\varphi-\bs h(t)),
\end{multline*}
where $\bs\varphi$ is any function belonging to $\K(t)$, for a.e. $t\in\,(0,T)$.

The uniqueness is immediate.
\end{proof}

\begin{lemma} Let $\bs h_\varepsilon$ be the solution of the problem {\em (\ref{ProblemaAproximado})} and $\bs h$ the weak limit of a subsequence of $\{\bs h_\varepsilon\}_\varepsilon$ in $L^p(0,T;\Wtilde{p}{\Omega})\cap H^1(0,T;\bs L^2(\Omega))$. Then
 $$\bs h(t)\in\K(t), \text{ for a.e.\ }t\in(0,T).$$
\end{lemma}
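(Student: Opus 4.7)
The plan is to deduce the pointwise constraint $|\Rot\bs h|\leq\Psi$ from the $L^1$-bound of Lemma~\ref{PApEstk} on $k_\ep(|\Rot\bs h_\ep|^p-\Psi^p)$, exploiting the sharp exponential growth built into $k_\ep$. The weak convergence $\Rot\bs h_\ep\rightharpoonup\Rot\bs h$ in $\bs L^p(Q_T)$ will then be combined with a truncation that takes values in a closed convex (hence weakly closed) subset of $\bs L^p(Q_T)$.

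First, fix $\delta>0$ and set $A_\ep^\delta=\{(x,t)\in Q_T:|\Rot\bs h_\ep(x,t)|^p\ge\Psi(x,t)^p+\delta\}$. Since $k_\ep$ is increasing and equals $e^{s/\ep}$ on $[\ep,1/\ep-\ep]$, for $\ep$ sufficiently small one has $k_\ep(|\Rot\bs h_\ep|^p-\Psi^p)\ge k_\ep(\delta)=e^{\delta/\ep}$ on $A_\ep^\delta$. Combined with Lemma~\ref{PApEstk},
\begin{equation*}
|A_\ep^\delta|\,e^{\delta/\ep}\le \int_{A_\ep^\delta}k_\ep\bigl(|\Rot\bs h_\ep|^p-\Psi^p\bigr)\le C,
\end{equation*}
so $|A_\ep^\delta|\le C\,e^{-\delta/\ep}\longrightarrow 0$ as $\ep\to 0^+$.

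Next, I introduce the truncated field $\bs u_\ep=\chi_{Q_T\setminus A_\ep^\delta}\,\Rot\bs h_\ep$, which by construction satisfies $|\bs u_\ep|^p\le\Psi^p+\delta$ almost everywhere; thus $\bs u_\ep$ belongs to the convex set $C^\delta=\{\bs v\in\bs L^p(Q_T):|\bs v|^p\le\Psi^p+\delta\text{ a.e.}\}$, which is norm-closed in $\bs L^p(Q_T)$ and therefore weakly closed. Using the $L^p$-bound of Lemma~\ref{PApEstRoth} together with H\"older's inequality,
\begin{equation*}
\norm{\bs u_\ep-\Rot\bs h_\ep}_{\bs L^1(Q_T)}\le |A_\ep^\delta|^{1/p'}\norm{\Rot\bs h_\ep}_{\bs L^p(Q_T)}\longrightarrow 0,
\end{equation*}
so $\bs u_\ep$ and $\Rot\bs h_\ep$ share the same weak-$\bs L^p$ limit $\Rot\bs h$ (note that $\bs u_\ep$ is bounded in $\bs L^p(Q_T)$ by $\|\Psi\|_\infty+\delta^{1/p}$). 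Weak closure of $C^\delta$ then yields $|\Rot\bs h|^p\le\Psi^p+\delta$ a.e.\ in $Q_T$.

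Since $\delta>0$ was arbitrary, letting $\delta\to 0^+$ gives $|\Rot\bs h(x,t)|\le\Psi(x,t)$ for a.e.\ $(x,t)\in Q_T$, that is, $\bs h(t)\in\K(t)$ for a.e.\ $t\in(0,T)$. The essential point is the passage from an \emph{averaged} $L^1$-bound on $k_\ep$ to a \emph{pointwise} bound on the limit: weak convergence of $\Rot\bs h_\ep$ does not provide any pointwise information, which is precisely why the exponential rate of $k_\ep$ is crucial---it forces the measure of the bad set $A_\ep^\delta$ to vanish fast enough to make the truncation $\bs u_\ep$ indistinguishable from $\Rot\bs h_\ep$ in the weak-$L^p$ limit while still satisfying the approximate constraint.
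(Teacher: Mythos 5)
Your proof is correct, and in its concluding step it takes a genuinely different---and in fact more robust---route than the paper. Both arguments rest on the same key observation: the $L^1$ bound of Lemma~\ref{PApEstk} combined with the exponential growth of $k_\ep$ forces the measure of the set where $|\Rot\bs h_\ep|^p$ exceeds $\Psi^p$ by a fixed margin to decay like $e^{-\delta/\ep}$. The paper implements this with a three-level decomposition ($A_\ep$, $B_\ep$, $C_\ep$ at thresholds $\sqrt\ep$ and $1/\ep$) and then concludes by writing $\int_{Q_T}(|\Rot\bs h|-\Psi)^+$ as the integral of a pointwise $\liminf$ of truncations of $|\Rot\bs h_\ep|-\Psi$ and invoking Fatou; that identification of the pointwise $\liminf$ implicitly relies on a.e.\ convergence of $|\Rot\bs h_\ep|$, which the weak $\bs L^p(Q_T)$ convergence of $\Rot\bs h_\ep$ does not by itself supply. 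You avoid this entirely: truncating $\Rot\bs h_\ep$ on the exponentially small bad set $A_\ep^\delta$ produces a field $\bs u_\ep$ lying in the norm-closed convex set $\bigl\{\bs v\in\bs L^p(Q_T):|\bs v|^p\le\Psi^p+\delta\ \text{a.e.}\bigr\}$; the $L^1$ comparison together with the uniform bound on $\bs u_\ep$ identifies its weak $\bs L^p$ limit with $\Rot\bs h$ (testing against bounded functions, dense in $\bs L^{p'}$), and Mazur's theorem preserves the constraint in the limit before you let $\delta\to0$. What your approach buys is that only the weak convergence actually provided by the compactness lemmas is ever used, making the passage to the limit fully rigorous; what the paper's computation buys, once a.e.\ convergence of $\Rot\bs h_\ep$ is available or established, is a slightly more direct quantitative control of $\int_{Q_T}(|\Rot\bs h|-\Psi)^+$.
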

\begin{proof} Define
\begin{align*}
A_\varepsilon&=\big\{(x,t)\in Q_T: |\Rot\bs h_\varepsilon(x,t)|^p-\Psi^p(x,t)|<\sqrt{\varepsilon}\big\},\\
B_\varepsilon&=\big\{(x,t)\in Q_T:\sqrt{\varepsilon}\le |\Rot\bs h_\varepsilon(x,t)|^p-\Psi^p(x,t)\le\tfrac1\varepsilon\big\},\\
C_\varepsilon&=\big\{(x,t)\in Q_T:|\Rot\bs h_\varepsilon(x,t)|^p-\Psi^p(x,t)>\tfrac1\varepsilon\big\}.
\end{align*}

We have
\begin{equation*}
\Tende{\displaystyle\int_{A_\varepsilon}\sqrt\varepsilon\leq \sqrt\varepsilon|Q_T|}{\varepsilon\rightarrow 0}{0}
\end{equation*}
and
\begin{equation*}
\Tende{\displaystyle\int_{C_\varepsilon}\frac1\varepsilon\leq \frac1\varepsilon
\int_{C_\varepsilon}\frac{k_\varepsilon(|\Rot\bs h_\varepsilon(x,t)|^p-\Psi^p(x,t))}{e^{{1}/{\varepsilon^2}}}
\leq C\,\frac1{\varepsilon}\, e^{{-1}/{\varepsilon^2}}}{\varepsilon\rightarrow 0}{0}
\end{equation*}

Recalling that, in $B_\varepsilon$, $k_\varepsilon(|\Rot\bs h_\varepsilon(x,t)|^p-\Psi^p(x,t))\ge e^{1/{\sqrt{\varepsilon}}}$, we have
$$|B_\varepsilon|=\int_{A_\varepsilon}1\le\int_{A_\varepsilon}\frac{k_\varepsilon(|\Rot\bs h_\varepsilon(x,t)|^p-\Psi^p(x,t))}{e^{1/{\sqrt{\varepsilon}}}}
\le Ce^{-1/{\sqrt{\varepsilon}}},$$
and so,  $\Tende{\displaystyle{|B_\varepsilon|}}{\varepsilon\rightarrow 0}{0}$.

Since
\begin{multline*}
\int_{Q_T}\big(|\Rot\bs h|-\Psi\big)^+=\int_{Q_T}\liminf_\varepsilon \big(|\Rot\bs h_\varepsilon|-\Psi\big)\wedge\tfrac1\varepsilon\vee\sqrt\varepsilon\\
\le
\liminf_\varepsilon \int_{Q_T}\big(|\Rot\bs h_\varepsilon|-\Psi\big)\wedge\tfrac1\varepsilon\vee\sqrt\varepsilon\\
=\liminf_\varepsilon\left(\int_{A_\varepsilon}\sqrt\varepsilon+
 \int_{B_\varepsilon}\big(|\Rot\bs h_\varepsilon|-\Psi\big)+
\int_{C_\varepsilon}\frac1\varepsilon\right) \\
=\liminf_\varepsilon \int_{Q_T}\big(|\Rot\bs h_\varepsilon|-\Psi\big)\chi_{_{B_\varepsilon}}\le\liminf_\varepsilon \|\,|\Rot\bs h_\varepsilon|-\Psi\|_{\bs L^p(Q_T)}\,
|B_\varepsilon|^{\frac1{p'}}=0,
\end{multline*}
we conclude that
$|\Rot\bs h|\le\Psi$ a.e. in $Q_T$, completing the proof.
\end{proof} 

\subsection{Continuous dependence on the data}

Consider given data $(\bs f_i,\bs g_i,\bs h_{i_0},\Psi_i)$ and define
$\K_i(t)=\{\bs v\in\W{p}{\Omega}:|\Rot\bs v|\le\Psi_i(t)\mbox{ a.e. in }\Omega\},$ for 
$i=1,2$.
\begin{lemma}\label{projeccao}
Given a function $\bs h_1\in L^p(0,T;\W{p}{\Omega})$ such that $\bs h_1(t)\in\K_1(t)$ for a.e. $t\in(0,T)$, there exists a function
$\widehat{\bs h}_2\in L^p(0,T;\W{p}{\Omega})$, verifying $\widehat{\bs h}_2(t)\in\K_2(t)$ for a.e. $t\in(0,T)$ and a positive constant $C$ such that
$$\|\Rot(\bs h_1-\widehat{\bs h}_2)\|_{\bs L^p(Q_T)}\le C\|\Psi_1-\Psi_2\|_{L^\infty(Q_T)}.$$
\end{lemma}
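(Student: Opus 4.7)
The idea is to produce $\widehat{\bs h}_2$ by a purely time-dependent scalar rescaling of $\bs h_1$, chosen so that the pointwise curl constraint $|\Rot\widehat{\bs h}_2|\le\Psi_2$ is forced by the given constraint $|\Rot\bs h_1|\le\Psi_1$. Concretely, I would set
$$\lambda(t)=\min\Bigl(1,\ \operatorname{ess\,inf}_{x\in\Omega}\tfrac{\Psi_2(x,t)}{\Psi_1(x,t)}\Bigr)\in[0,1],\qquad \widehat{\bs h}_2(x,t)=\lambda(t)\,\bs h_1(x,t).$$
Because $\lambda$ depends only on $t$ and the defining conditions $\Div\bs v=0$ and $\bs v\cdot\bs n_{|_\Gamma}=0$ of $\W{p}{\Omega}$ are linear, this rescaling lies in $L^p(0,T;\W{p}{\Omega})$. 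The admissibility $\widehat{\bs h}_2(t)\in\K_2(t)$ is then built into the definition, since
$$|\Rot\widehat{\bs h}_2(x,t)|=\lambda(t)\,|\Rot\bs h_1(x,t)|\le\lambda(t)\,\Psi_1(x,t)\le\Psi_2(x,t)\quad\text{a.e.\ in }Q_T.$$

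For the quantitative bound I would use $\Psi_1\ge\alpha>0$ to estimate
$$1-\lambda(t)\le\operatorname{ess\,sup}_{x\in\Omega}\frac{(\Psi_1(x,t)-\Psi_2(x,t))^+}{\Psi_1(x,t)}\le\tfrac{1}{\alpha}\,\|\Psi_1-\Psi_2\|_{L^\infty(Q_T)}.$$
Since $\Rot(\bs h_1-\widehat{\bs h}_2)=(1-\lambda(t))\Rot\bs h_1$, taking $\bs L^p$-norms over $Q_T$ yields the claimed inequality with $C=\alpha^{-1}\|\Rot\bs h_1\|_{\bs L^p(Q_T)}$.

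I do not anticipate a serious obstacle; the only technicality is to verify that $\lambda$ is measurable. Since $\Psi_i\in W^{1,\infty}(0,T;L^\infty(\Omega))$ with $\Psi_i\ge\alpha$, the map $t\mapsto\Psi_2(\cdot,t)/\Psi_1(\cdot,t)$ is Lipschitz from $[0,T]$ into $L^\infty(\Omega)$, and $f\mapsto\operatorname{ess\,inf}_\Omega f$ is $1$-Lipschitz from $L^\infty(\Omega)$ to $\RR$, so $\lambda$ is in fact Lipschitz on $[0,T]$, which is more than enough.
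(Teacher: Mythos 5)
Your proposal is correct and follows essentially the same route as the paper: both construct $\widehat{\bs h}_2$ as a time-dependent scalar rescaling of $\bs h_1$, with the scaling factor deviating from $1$ by at most $\tfrac1\alpha\|\Psi_1-\Psi_2\|_{L^\infty}$, which immediately gives admissibility in $\K_2(t)$ and the stated bound with $C=\alpha^{-1}\|\Rot\bs h_1\|_{\bs L^p(Q_T)}$. The only cosmetic difference is the choice of factor — you take the optimal $\lambda(t)=\min\bigl(1,\operatorname{ess\,inf}_\Omega\Psi_2/\Psi_1\bigr)$ while the paper uses the slightly smaller but algebraically simpler $\eta(t)=\alpha/(\alpha+\|\Psi_1(t)-\Psi_2(t)\|_{L^\infty(\Omega)})$ — and your measurability remark is a welcome extra precaution.
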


\begin{proof}
Define
$$\beta(t)=\|\Psi_1(t)-\Psi_2(t)\|_{L^\infty(\Omega)},\ \eta(t)= \frac\alpha{\alpha+\beta(t)}\ \mbox{ and }\ \widehat{\bs h}_2=\eta\bs h_1.$$

Then
\begin{equation*}
 |\Rot\widehat{\bs h}_2(t)|=\eta(t)|\Rot\bs h_1(t)|\le\eta(t)\Psi_1(t)\le\Psi_2(t),
\end{equation*}
since
\begin{equation*}
 \frac{\Psi_1(t)}{\Psi_2(t)}=\frac{\Psi_1(t)-\Psi_2(t)+\Psi_2(t)}{\Psi_2(t)}\le \frac{\beta(t)}{\alpha}+1=\frac1{\eta(t)},
\end{equation*}
and so $\widehat{\bs h}_2(t)\in\K_2(t)$ for a.e. $t\in(0,T)$.

Now,
\begin{gather}
\begin{split}
\|\Rot(\bs h_1(t)-\widehat{\bs h}_2(t))\|^p_{\bs L^{p}(\Omega)}&=\int_{\Omega}|\Rot(\bs h_1(t)-\widehat{\bs h}_2(t))|^p\\
&=\int_{\Omega}\big(1-\eta(t)\big)^p|\Rot\bs h_1(t)|^p\\
&=\int_{\Omega}\left(\frac{\beta(t)}{\alpha+\beta(t)}\right)^p|\Rot\bs h_1(t)|^p\\
&\le\frac{\beta(t)^p}{\alpha^p}\int_{\Omega}|\Rot\bs h_1(t)|^p.
\end{split}
\end{gather}

Integrating in time we obtain
\begin{equation*}
 \|\Rot(\bs h_1-\widehat{\bs h}_2)\|^p_{\bs L^p(Q_T)}
\le \frac{1}{\alpha^p}\|\Rot\bs h_1\|^p_{\bs L^p(Q_T)} \|\Psi_1-\Psi_2\|^p_{L^\infty(Q_T)}.
\end{equation*}
\end{proof}

\begin{remark} If we replace, in the last lemma, the subscript $1$ by the subscript $2$, the corresponding function we construct will be denoted by $\widehat{\bs h}_1$.
\hfill{$\square$}\end{remark}

\begin{theorem}
Let $\bs h_i$ denote the solution of the variational inequality {\em (\ref{iv})} with data $(\bs f_i,\bs g_i,\Psi_i,\bs h_{i_0})$, $i=1,2$.

Then there exists a positive constant $C$ such that
\begin{multline}
\|\bs h_1-\bs h_2\|^2_{L^\infty(0,T;\bs L^2(\Omega))}+\|\Rot(\bs h_1-\bs h_2)\|^{p\vee 2}_{\bs L^{p}(Q_T)}\le \\
C\big(\|\bs f_1-\bs f_2\|^{p'\wedge 2}_{\bs L^{q'}(Q_T)}+\|\bs g_1-\bs g_2\|^{p'\wedge 2}_{\bs L^{r'}(\Sigma_T)}\\
+\|\bs h_{1_0}-\bs h_{2_0}\|^{2}_{\bs L^{2}(\Omega)}+
\|\Psi_1-\Psi_2\|_{L^\infty(Q_T)}\big).
\end{multline}
\end{theorem}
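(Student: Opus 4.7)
The central obstacle is that $\bs h_2(t)\in\K_2(t)$ is generally not admissible as a test function in the variational inequality (\ref{iv}) for $\bs h_1$, and vice versa. I would resolve this by invoking the projection Lemma~\ref{projeccao} together with the symmetric statement in the subsequent remark, thereby producing, for a.e.\ $t\in(0,T)$, admissible competitors
\[
\widehat{\bs h}_2(t)=\eta(t)\,\bs h_1(t)\in\K_2(t),\qquad \widehat{\bs h}_1(t)=\eta(t)\,\bs h_2(t)\in\K_1(t),
\]
with $\eta(t)=\alpha/\bigl(\alpha+\|\Psi_1(t)-\Psi_2(t)\|_{L^\infty(\Omega)}\bigr)$, for which the lemma supplies
\[
\|\Rot(\bs h_i-\widehat{\bs h}_{3-i})\|_{\bs L^p(Q_T)}\le C\|\Psi_1-\Psi_2\|_{L^\infty(Q_T)}\qquad (i=1,2),
\]
with $C$ depending only on the a priori bound on $\|\Rot\bs h_i\|_{\bs L^p(Q_T)}$ from Lemma~\ref{PApEstRoth}.

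Inserting $\bs\varphi=\widehat{\bs h}_{3-i}(t)$ into (\ref{iv}) for $\bs h_i$ and adding the two inequalities, I would decompose $\widehat{\bs h}_{3-i}-\bs h_i=(\bs h_{3-i}-\bs h_i)+(\widehat{\bs h}_{3-i}-\bs h_{3-i})$. The \emph{diagonal} part yields the classical energy identity producing $\tfrac12\tfrac{d}{dt}\|\bs h_1-\bs h_2\|_{\bs L^2(\Omega)}^2$ from the time-derivative contributions, together with the monotone cross-term
\[
\int_\Omega\nu\bigl(|\Rot\bs h_1|^{p-2}\Rot\bs h_1-|\Rot\bs h_2|^{p-2}\Rot\bs h_2\bigr)\cdot\Rot(\bs h_1-\bs h_2).
\]
The \emph{off-diagonal} remainder gathers four projection-error integrals in $\widehat{\bs h}_{3-i}-\bs h_{3-i}$ (the time-derivative, monotone, force and boundary pieces); each is controlled by H\"older, Remark~\ref{PoincareTrace} and the a priori estimates of Lemmas~\ref{PApEstRoth}--\ref{PAEstdth} (noting that the constraint automatically gives $|\Rot\bs h_i|\le\|\Psi_i\|_{L^\infty(Q_T)}$ pointwise), producing contributions of size $\|\Psi_1-\Psi_2\|_{L^\infty(Q_T)}$. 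The genuine data-difference integrals $\int(\bs f_1-\bs f_2)\cdot(\bs h_1-\bs h_2)$ and $\int_\Gamma(\bs g_1-\bs g_2)\cdot(\bs h_1-\bs h_2)$ are estimated in the standard way by H\"older, Sobolev/trace and Young, yielding the right-hand side exponent $p'\wedge 2$ plus a small multiple of $\|\Rot(\bs h_1-\bs h_2)\|^{p\vee 2}_{\bs L^p(Q_T)}$ to be absorbed on the left.

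For $p\ge 2$, the strong monotonicity (\ref{Operador:a:prop:c}\,') directly gives, after integration in time, a lower bound $a_*\|\Rot(\bs h_1-\bs h_2)\|^{p}_{\bs L^p(Q_T)}$, matching the exponent $p\vee 2=p$. For $1<p<2$ it only provides the weighted bound $a_*\int_{Q_T}(|\Rot\bs h_1|+|\Rot\bs h_2|)^{p-2}|\Rot(\bs h_1-\bs h_2)|^2$; converting this into a control on $\|\Rot(\bs h_1-\bs h_2)\|^{2}_{\bs L^p(Q_T)}$ is the step I expect to be the \emph{main technical difficulty}. Here the curl constraint rescues the argument: since $|\Rot\bs h_i|\le\|\Psi_i\|_{L^\infty(Q_T)}$ a.e., one has the uniform pointwise lower bound
\[
(|\Rot\bs h_1|+|\Rot\bs h_2|)^{p-2}\ge \bigl(2\max_{i=1,2}\|\Psi_i\|_{L^\infty(Q_T)}\bigr)^{p-2}=:c_*>0,
\]
and a final H\"older inclusion (using $p<2$ and $|Q_T|<\infty$) converts $c_*\|\Rot(\bs h_1-\bs h_2)\|^2_{\bs L^2(Q_T)}$ into $c'\|\Rot(\bs h_1-\bs h_2)\|^2_{\bs L^p(Q_T)}$, matching the exponent $p\vee 2=2$. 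Integrating the resulting differential inequality from $0$ to $t$, taking the supremum over $t\in[0,T]$ and absorbing the small multiples on the right-hand side into the left then yields the claimed continuous dependence estimate, and uniqueness (already implicit in the theorem) follows at once from the case $(\bs f_1,\bs g_1,\Psi_1,\bs h_{1_0})=(\bs f_2,\bs g_2,\Psi_2,\bs h_{2_0})$.
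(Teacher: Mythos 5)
Your proposal is correct and follows the paper's proof almost step for step: the same projection Lemma~\ref{projeccao} (and its symmetric counterpart) to build admissible competitors $\widehat{\bs h}_{3-i}=\eta\,\bs h_i$, the same diagonal/off-diagonal decomposition leading to the inequality (\ref{h1h2}) with a remainder $\Theta(t)$ collecting the four projection-error integrals bounded by $\norm{\Psi_1-\Psi_2}_{L^\infty(Q_T)}$, and the same Young-inequality treatment of the data differences giving the exponents $p'\wedge 2$ and $p\vee 2$. The one place where you genuinely diverge is the singular case $1<p<2$. The paper converts the weighted monotonicity term $\int_{Q_T}(|\Rot\bs h_1|+|\Rot\bs h_2|)^{p-2}|\Rot(\bs h_1-\bs h_2)|^2$ into $C\norm{\Rot(\bs h_1-\bs h_2)}^2_{\bs L^p(Q_T)}$ via the inverse H\"older inequality as in (\ref{rotp}), using only the a priori bound on $\int_{Q_T}(|\Rot\bs h_1|+|\Rot\bs h_2|)^p$; you instead exploit the constraint itself, noting that $|\Rot\bs h_i|\le\norm{\Psi_i}_{L^\infty(Q_T)}$ pointwise forces the weight $(|\Rot\bs h_1|+|\Rot\bs h_2|)^{p-2}$ to be bounded below by a positive constant, after which the finite-measure inclusion $\bs L^2(Q_T)\hookrightarrow\bs L^p(Q_T)$ finishes the job. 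Your route is more elementary and is available precisely because the solutions are constrained (it would not work for the unconstrained equation of Section~2, which is why the paper develops the inverse H\"older device there); the paper's route is more uniform in that it reuses the same mechanism in both the constrained and unconstrained settings and does not require $\Rot\bs h_i\in\bs L^\infty$. Both yield the stated exponent $2$ on the left-hand side, so the argument is sound.
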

\begin{proof} We know that, given $\bs\varphi\in\K_i(t)$, $i=1,2$, we have
\begin{align*}
\int_\Omega\partial_t \bs h_i(t)\cdot(\bs\varphi-\bs h_i(t))+ &
\int_\Omega\nu |\Rot\bs h_i(t)|^{p-2}\Rot\bs h_i(t)\cdot\Rot(\bs \varphi-\bs h_i(t))\\
\ge&\int_\Omega\, \bs f_i(t)\cdot(\bs\varphi-\bs h_i(t))+
\int_\Gamma\, \bs g_i(t)\cdot(\bs\varphi-\bs h_i(t)),\\
\bs h_i(0)=&\bs h_{i_0}.
\end{align*}

Choose, for $i=1$,  $\widehat{\bs h}_1$ as test function. Then,
\begin{multline*}
\int_\Omega\partial_t \bs h_1(t)\cdot(\bs h_1(t)-\widehat{\bs h}_1(t))+
\int_\Omega\nu |\Rot\bs h_1(t)|^{p-2}\Rot\bs h_1(t)\cdot\Rot(\bs h_1(t)-\widehat{\bs h}_1(t))\\
\le \int_\Omega\, \bs f_1(t)\cdot(\bs h_1(t)-\widehat{\bs h}_1(t))+
\int_\Gamma\, \bs g_1(t)\cdot(\bs h_1(t)-\widehat{\bs h}_1(t)),
\end{multline*}
from which we obtain
\begin{multline*}
\int_\Omega\partial_t \bs h_1(t)\cdot(\bs h_1(t)-\bs h_{2}(t))+
\int_\Omega\nu |\Rot\bs h_1(t)|^{p-2}\Rot\bs h_1(t)\cdot\Rot(\bs h_1(t)-\bs h_{2}(t))\\
\le \int_\Omega\, \bs f_1(t)\cdot(\bs h_1(t)-\bs h_{2}(t))+
\int_\Gamma\, \bs g_1(t)\cdot(\bs h_1(t)-\bs h_{2}(t))\\
+\int_\Omega\partial_t \bs h_1(t)\cdot(\widehat{\bs h}_1(t)-\bs h_2(t))+\int_\Omega\nu |\Rot\bs h_1(t)|^{p-2}\Rot\bs h_1(t)\cdot\Rot(\widehat{\bs h}_1(t)-\bs h_2(t))\\
+\int_\Omega\, \bs f_1(t)\cdot(\bs h_2(t)-\widehat{\bs h}_1(t))+
\int_\Gamma\, \bs g_1(t)\cdot(\bs h_2(t)-\widehat{\bs h}_1(t)).
\end{multline*}

We have an analogous expression with $\bs h_1$ substituted by $\bs h_2$ and $\widehat{\bs h}_1$ by $\widehat{\bs h}_2$. From both expressions we get
\begin{multline}\label{h1h2}
\int_\Omega\partial_t(\bs h_1(t)-\bs h_{2}(t))\cdot(\bs h_1(t)-\bs h_{2}(t))\\
+
\int_\Omega\nu\big(|\Rot\bs h_1(t)|^{p-2}\Rot\bs h_1(t) - |\Rot\bs h_2(t)|^{p-2}\Rot\bs h_2(t)\big)\cdot\Rot(\bs h_1(t)-\bs h_{2}(t))\\
\le \int_\Omega\, (\bs f_1(t)-\bs f_2(t))\cdot(\bs h_1(t)-\bs h_{2}(t))
+
\int_\Gamma\, (\bs g_1(t)-\bs g_2(t)\cdot(\bs h_1(t)-\bs h_{2}(t))+\Theta(t),
\end{multline}
where
\begin{multline*}
\Theta(t)=\int_\Omega\partial_t \bs h_1(t)\cdot(\widehat{\bs h}_1(t)-\bs h_2(t))+\int_\Omega\nu |\Rot\bs h_1(t)|^{p-2}\Rot\bs h_1(t)\cdot\Rot(\widehat{\bs h}_1(t)-\bs h_2(t))\\
+\int_\Omega\, \bs f_1(t)\cdot(\bs h_2(t)-\widehat{\bs h}_1(t))+
\int_\Gamma\, \bs g_1(t)\cdot(\bs h_2(t)-\widehat{\bs h}_1(t))\\
+\int_\Omega\partial_t \bs h_2(t)\cdot(\widehat{\bs h}_2(t)-\bs h_1(t))+\int_\Omega\nu |\Rot\bs h_2(t)|^{p-2}\Rot\bs h_2(t)\cdot\Rot(\widehat{\bs h}_2(t)-\bs h_1(t))\\
+\int_\Omega\, \bs f_2(t)\cdot(\bs h_1(t)-\widehat{\bs h}_2(t))+
\int_\Gamma\, \bs g_2(t)\cdot(\bs h_1(t)-\widehat{\bs h}_2(t)).
\end{multline*}

\begin{itemize}
\item  $p\ge 2$
\end{itemize}

From (\ref{h1h2}) we deduce,
 using the Remark \ref{PoincareTrace}, that there exists a positive constant $C_1$ such that
\begin{multline*}
\tfrac12\int_\Omega|\bs h_1(t)-\bs h_{2}(t)|^2+a_*\int_{Q_T}|\Rot(\bs h_1-\bs h_{2})|^p\\
\le C_1\Big(\|\bs f_1-\bs f_2\|_{\bs L^{q'}(Q_T)}
+\|\bs g_1-\bs g_2\|_{\bs L^{r'}(\Sigma_T)}\Big)\|\Rot(\bs h_1-\bs h_2)\|_{\bs L^p(Q_T)}\\
+\tfrac12\int_\Omega|\bs h_{1_0}-\bs h_{2_0}|^2+\int_0^T\Theta(\tau)d\tau.
\end{multline*}

It is easy to understand, by the expression of $\Theta$, that
\begin{align*}
\int_0^T \Theta(\tau)d\tau
&\leq C_2\big(\|\Rot(\bs h_1-\widehat{\bs h}_2)\|_{\bs L^p(Q_T)}+
\|\Rot(\bs h_2-\widehat{\bs h}_1)\|_{\bs L^p(Q_T)}\big)\\
&\leq C_3\norm{ \Psi_1- \Psi_2}_{\bs L^\infty(Q_T)},
\end{align*}
$C_2,\,C_3$ positive constants and, from this last inequality we conclude that
\begin{multline*}
\|\bs h_1-\bs h_{2}\|^2_{L^\infty(0,T;\bs L^2(\Omega))}+\|\Rot(\bs h_1-\bs h_{2})\|^p_{\bs L^p(Q_T)}\le C\left(\|\bs f_1-\bs f_2\|^{p'}_{\bs L^{q'}(Q_T)}\right.\\
\left.+\|\bs g_1-\bs g_2\|^{p'}_{\bs L^{r'}(\Sigma_T)}+\|\bs h_{1_0}-\bs h_{2_0}\|^2_{\bs L^2(\Omega)}+\|\Psi_1-\Psi_2\|_{L^\infty(Q_T)}\right).
\end{multline*}

\begin{itemize}
\item  $1<p<2$
\end{itemize}

Again, using (\ref{h1h2}),  the Remark \ref{PoincareTrace} and arguments similar to (\ref{rotp}), defining
$$
\widehat Q_T=\big\{(x,t)\in Q_T: \Rot\bs h_1(x,t) \neq \bs0,\,\Rot\bs h_2(x,t) \neq \bs0\big\}
$$
we obtain
\begin{multline*}
\tfrac12\int_\Omega|\bs h_1(t)-\bs h_{2}(t)|^2+a_*\Big(\int_{\widehat{Q}_T}|\Rot(\bs h_1-\bs h_2|^p\Big)^{\frac2{p}}\,\Big(\int_{\widehat{Q}_T}\big(|\Rot\bs h_1|+|\Rot\bs h_2|\big)^p\Big)^{\frac{p-2}p}\\
\le\|\bs f_1-\bs f_2\|_{\bs L^{q'}(Q_T)}\|\bs h_1-\bs h_2\|_{\bs L^q(Q_T)}
+ \|\bs g_1-\bs g_2\|_{\bs L^{r'}(\Sigma_T)}\|\bs h_1-\bs h_2\|_{\bs L^r(\Sigma_T)}\\
+\tfrac12\|\bs h_{1_0}-\bs h_{2_0}\|^2_{\bs L^2(\Omega)}+\int_0^T\Theta(\tau)d\tau.
\end{multline*}

So, there exists constants $C_2$ and $C_3$ such that
\begin{multline*}
\|\bs h_1-\bs h_{2}\|^2_{L^\infty(0,T;\bs L^2(\Omega))}+C_2\|\Rot(\bs h_1-\bs h_{2})\|^2_{\bs L^p(Q_T)}\\
\le C_3\left(\|\bs f_1-\bs f_2\|_{\bs L^{q'}(Q_T)}^2
+\|\bs g_1-\bs g_2\|^2_{\bs L^{r'}(\Sigma_T)}\right)
+\tfrac12\|\bs h_{1_0}-\bs h_{2_0}\|^2_{\bs L^2(\Omega)}+\int_0^T\Theta(\tau)d\tau
\end{multline*}
and the  conclusion follows as in the previous case.
\end{proof}

\subsection{The asymptotic behaviour in time of the solutions of the variational inequality}

Consider the  stationary variational inequality:
to find $\bs h_\infty\in\,\K_\infty$ such that
\begin{multline}\label{iv_infty}
\int_\Omega \nu |\Rot\bs h_\infty|^{p-2}\Rot\bs h_\infty \cdot\Rot(\bs\varphi -\bs h_\infty)\\
\geq\int_\Omega\bs f_\infty\cdot(\bs\varphi -\bs h_\infty)+\int_\Gamma \bs g_\infty\cdot(\bs\varphi -\bs h_\infty)\qquad
\forall\bs\varphi\in\K_\infty,
\end{multline}
where $\K_\infty=\big\{\bs v\in\W{p}{\Omega}: |\Rot\bs v|\leq\Psi_\infty\text{ a.e. in } \Omega\big\}$,
and assume
\begin{equation*}
 \bs f_\infty\in\bs L^{q'}(\Omega),\quad
\bs g_\infty\in\bs L^{r'}(\Gamma)
\quad\text{and}\quad
\Psi_\infty\in L^\infty(\Omega),\ \Psi_\infty\geq\alpha>0.
\end{equation*}

\begin{theorem}
Let $p\geq\frac65$, $\bs h$ be the solution of the variational inequality~{\em(\ref{iv})} and $\bs h_\infty$  the solution of the
~{\em(\ref{iv_infty})}.

Suppose that
\begin{align*}
&\bs f\in L^\infty(0,\infty;\bs L^{q'\vee 2}(\Omega)),\\
&\bs g\in L^\infty(0,\infty;\bs L^{r'}(\Gamma))\cap W^{1,r'}(0,\infty;\bs L^{r'}(\Gamma)),\\
&\Psi\in W^{1,\infty}(0,\infty; L^\infty(\Omega)).
\end{align*}

Suppose in addition that, for $\xi$ defined in {\em (\ref{xi})},
\begin{equation*}
\Tende{\displaystyle{\int_{\frac{t}2}^{t}}\xi(\tau)\,d\tau}
 {t\rightarrow\infty} {0\quad\text{if}\quad p>2\quad\text{and}}\quad\Tende{\displaystyle{\int_{t}^{t+1}}\xi(\tau)\,d\tau}
 {t\rightarrow\infty} {0\quad\text{if}\quad \frac65\le p\le 2}
\end{equation*}
and
\begin{equation*}
\exists D>0\quad\exists\gamma\quad\norm{\Psi(t)-\Psi_\infty}_{L^\infty(\Omega)}\leq\tfrac{D}{t^\gamma}\quad\text{with}\quad \gamma>
\begin{cases}
\frac32&\ \text{if}\ p>2,\\
\frac12&\ \text{if}\ \frac65\leq p\leq 2.
\end{cases}
\end{equation*}

Then we have
 \begin{equation*}
  \Tende{\norm{\bs h(t)-\bs h_\infty}_{\bs L^2(\Omega)}}{t\rightarrow\infty}{0}.
 \end{equation*}
\end{theorem}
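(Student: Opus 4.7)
The plan is to adapt the strategy of the asymptotic theorems of Section~\ref{ca-equacao} to the variational inequality framework. The essential new difficulty is that $\bs h_\infty\in\K_\infty$ is not in general admissible in (\ref{iv}) and $\bs h(t)\in\K(t)$ is not in general admissible in (\ref{iv_infty}); this mismatch is handled through the scaling construction of Lemma~\ref{projeccao}. For a.e.~$t$, produce the projections
$$\widehat{\bs h}_\infty(t) = \tfrac{\alpha}{\alpha+\beta(t)}\,\bs h_\infty \in \K(t),\quad \widehat{\bs h}(t) = \tfrac{\alpha}{\alpha+\beta(t)}\,\bs h(t) \in \K_\infty,\quad \beta(t)=\norm{\Psi(t)-\Psi_\infty}_{L^\infty(\Omega)},$$
so that, by Lemma~\ref{projeccao} and Remark~\ref{PoincareTrace}, the errors $\bs e_\infty(t)=\widehat{\bs h}_\infty(t)-\bs h_\infty$ and $\bs e(t)=\widehat{\bs h}(t)-\bs h(t)$ satisfy $\norm{\Rot\bs e_\infty}_{\bs L^p(\Omega)}+\norm{\bs e_\infty}_{\bs L^2(\Omega)}+\norm{\Rot\bs e}_{\bs L^p(\Omega)}+\norm{\bs e}_{\bs L^2(\Omega)}\le CD/t^\gamma$.

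Next, test (\ref{iv}) at time $t$ with $\bs\varphi=\widehat{\bs h}_\infty(t)$, test (\ref{iv_infty}) with $\bs\varphi=\widehat{\bs h}(t)$, and subtract, using the identities $\widehat{\bs h}_\infty-\bs h=-\bs w+\bs e_\infty$ and $\widehat{\bs h}-\bs h_\infty=\bs w+\bs e$ with $\bs w(t)=\bs h(t)-\bs h_\infty$. After reorganization one obtains
\begin{multline*}
\tfrac12\tfrac{d}{dt}\norm{\bs w(t)}_{\bs L^2(\Omega)}^2 + \int_\Omega \nu\bigl(|\Rot\bs h|^{p-2}\Rot\bs h - |\Rot\bs h_\infty|^{p-2}\Rot\bs h_\infty\bigr)\cdot\Rot\bs w\\
\le \int_\Omega(\bs f(t)-\bs f_\infty)\cdot\bs w + \int_\Gamma(\bs g(t)-\bs g_\infty)\cdot\bs w + R(t),
\end{multline*}
where the remainder $R(t)$ collects the projection terms, namely combinations of $\int_\Omega\partial_t\bs h\cdot\bs e_\infty$, $\int_\Omega\nu|\Rot\bs h|^{p-2}\Rot\bs h\cdot\Rot\bs e_\infty$, $\int_\Omega\nu|\Rot\bs h_\infty|^{p-2}\Rot\bs h_\infty\cdot\Rot\bs e$, and analogous $\bs f$, $\bs g$ boundary and interior terms, each pointwise bounded by $\beta(t)$ multiplied by quantities that are uniformly controlled by Proposition~\ref{te_proposition}.

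For $p>2$ the monotonicity condition (\ref{Operador:a:prop:c}') provides the lower bound $a_*\norm{\Rot\bs w}_{\bs L^p(\Omega)}^p$ for the second integral on the left; combining with Young's inequality on the right and with Poincar\'e's inequality $\norm{\bs w}_{\bs L^2(\Omega)}\le C\norm{\Rot\bs w}_{\bs L^p(\Omega)}$ yields $\phi'(t)+2C\phi(t)^{p/2}\le l(t)$ with $\phi=\norm{\bs w}_{\bs L^2(\Omega)}^2$ and $l(t)$ dominated by $\xi(t)$ plus positive powers of $\beta(t)$. Lemma~\ref{lema:Simon} on $(t/2,t)$ then concludes once $\int_{t/2}^t l(\sigma)\,d\sigma\to0$; the hypothesis on $\xi$ takes care of the data and the decay $\gamma>3/2$ is used to bound the $\beta$-contributions over a growing interval of length $t/2$. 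For $\tfrac65\le p\le 2$, combine the degenerate form of (\ref{Operador:a:prop:c}') with the inverse H\"older trick of Section~\ref{ca-equacao} (using that $|\Rot\bs h(t)|\le\Psi$ and $|\Rot\bs h_\infty|\le\Psi_\infty$ are uniformly bounded) to convert the coercive integral into $C\norm{\Rot\bs w}_{\bs L^p(\Omega)}^2$ and then, via Poincar\'e, into $C\phi(t)$; the resulting $\phi'+C\phi\le l$ is handled by Lemma~\ref{lema:Haraux} on the unit interval $(t,t+1)$, where $\gamma>1/2$ suffices.

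The main obstacle will be the rigorous control of the remainder $R(t)$. The most delicate contributions are $\int_\Omega\partial_t\bs h\cdot\bs e_\infty$ (since $\partial_t\bs h$ is only in $\bs L^2(Q_T)$ globally, not $L^\infty$ in time, even under Proposition~\ref{te_proposition}) and the flux difference $\int_\Omega(|\Rot\bs h|^{p-2}\Rot\bs h-|\Rot\bs h_\infty|^{p-2}\Rot\bs h_\infty)\cdot\Rot\bs e_\infty$, which partly must be absorbed by the coercive term through Young's inequality. The balance between the exponents in these Young inequalities and the length of the integration interval $(t/2,t)$ versus $(t,t+1)$ is precisely what produces the different thresholds $\gamma>3/2$ (for $p>2$) and $\gamma>1/2$ (for $\tfrac65\le p\le2$) in the statement.
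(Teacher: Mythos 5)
Your proposal follows essentially the same route as the paper: the scaling projections of Lemma~\ref{projeccao} to obtain admissible test functions in each convex set, subtraction of the two inequalities to get a differential inequality for $\norm{\bs h(t)-\bs h_\infty}^2_{\bs L^2(\Omega)}$ with a remainder of order $\beta(t)=\norm{\Psi(t)-\Psi_\infty}_{L^\infty(\Omega)}$, and then Lemma~\ref{lema:Simon} on $(t/2,t)$ for $p>2$ versus Lemma~\ref{lema:Haraux} on $(t,t+1)$ for $\frac65\le p\le2$. You also correctly identify that the term pairing $\partial_t\bs h$ (controlled only in $\bs L^2$ in time, hence growing like $t^{1/2}$ on $(0,t)$) with the projection error is what forces the thresholds $\gamma>\frac32$ and $\gamma>\frac12$, which is exactly the paper's mechanism.
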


\begin{proof}Let
\begin{equation*}
\beta(t)=\|\Psi(t)-\Psi_\infty\|_{L^\infty(\Omega)},\quad\text{and}\quad\eta(t)=\frac\alpha{\alpha+\beta(t)}.
\end{equation*}

Define
\begin{equation}\label{overline-h}
\overline{\bs h}(t)=\eta(t)\bs h_\infty,\quad\text{and}\quad\overline{\bs h}_\infty(t)=\eta(t)\bs h(t).
\end{equation}

As in Lemma~\ref{projeccao} we have $\overline{\bs h}(t)\in\K_\infty$ and $\overline{\bs h}_\infty(t)\in\K(t)$, for a.e. $t\in(0,\infty)$.

Substituting, in (\ref{h1h2}), $\bs h_1$ by $\bs h$ and $\bs h_2$ by $\bs h_\infty$, we obtain
\begin{multline*}
\int_\Omega\partial_t(\bs h(t)-\bs h_\infty)\cdot(\bs h(t)-\bs h_\infty)\\
+
a_* \int_\Omega\big(|\Rot\bs h(t)|^{p-2}\Rot\bs h(t)
-|\Rot\bs h_\infty)|^{p-2}\Rot\bs h_\infty\big)\cdot\Rot(\bs h(t)-\bs h_\infty)\\
\le \int_\Omega\, (\bs f(t)-\bs f_\infty)\cdot(\bs h(t)-\bs h_\infty)
+
\int_\Gamma\, (\bs g(t)-\bs g_\infty)\cdot(\bs h(t)-\bs h_\infty)+\Theta(t),
\end{multline*}
where
\begin{multline*}
\Theta(t)=\int_\Omega\partial_t \bs h(t)\cdot(\overline{\bs h}_\infty(t)-\bs h_\infty)+\int_\Omega\nu |\Rot\bs h(t)|^{p-2}\Rot\bs h(t)\cdot
\Rot(\overline{\bs h}_\infty(t)-\bs h_\infty)\\
+\int_\Omega\, \bs f(t)\cdot(\bs h_\infty-\overline{\bs h}_\infty(t))+
\int_\Gamma\, \bs g(t)\cdot(\bs h_\infty-\overline{\bs h}_\infty(t))\\
+\int_\Omega\nu |\Rot\bs h_\infty|^{p-2}\Rot\bs h_\infty
\cdot\Rot(\overline{\bs h}(t)-\bs h(t))\\
+\int_\Omega\, \bs f_\infty\cdot(\bs h(t)-\overline{\bs h}(t))+
\int_\Gamma\, \bs g_\infty\cdot(\bs h(t)-\overline{\bs h}(t))
\end{multline*}
and $\overline{\bs h}(t)$ and  $\overline{\bs h}_\infty(t)$ are defined in (\ref{overline-h}).

From Lemma~\ref{PAEstdth}, we observe that there exists positive constants, $C_1$ and $C_2$, independent of $t$, such that
\begin{equation*}
\|\partial_t\bs h\|_{\bs L^2(\Omega\times(0,t))}\leq C_1t^\frac12+C_2.
\end{equation*}

Define $\Phi(t)=\displaystyle{\int_\Omega\|\bs h(t)-\bs h_\infty\|^2}$.

Arguing as in Section~\ref{ca-equacao}, for $p>2$, we obtain, for a positive constant $C$,
\begin{equation*}
\Phi'(t)+C\Phi^\frac{p}2(t)\leq l(t),
\end{equation*}
where, for a positive constant $C_3$,
\begin{multline*}
l(t)=C_3\left(\|\bs f(t)-\bs f_\infty\|_{\bs L^{q'}(\Omega)}^{p'}+\|\bs g(t)-\bs g_\infty\|_{\bs L^{r'}(\Gamma)}^{p'}+
\|\Psi(t)-\Psi_\infty\|_{ L^{\infty}(\Omega)}\right)\\
+C_1t^\frac12\|\Psi(t)-\Psi_\infty\|_{ L^{\infty}(\Omega)}.
\end{multline*}

But
\begin{multline*}
\int_{\frac t2}^t l(\tau)\,d\tau\\
\leq
C_3 \int_{\frac t2}^t\left(\|\bs f(\tau)-\bs f_\infty\|_{\bs L^{q'}(\Omega)}^{p'}+\|\bs g(\tau)-\bs g_\infty\|_{\bs L^{r'}(\Gamma)}^{p'}+
\|\Psi(\tau)-\Psi_\infty\|_{ L^{\infty}(\Omega)}\right)\,d\tau\\
\Tende{+C_1\displaystyle\int_{\frac t2}^t \tau^\frac12\|\Psi(\tau)-\Psi_\infty\|_{ L^{\infty}(\Omega)}\,d\tau}{t\rightarrow\infty}{0,}
\end{multline*}
because $\gamma>\frac32$.

For $\frac65\leq p\leq2$ we have
\begin{equation*}
\Phi'(t)+C\Phi(t)\leq l(t),
\end{equation*}
where, for a positive constant $C_3$,
\begin{multline*}
l(t)=C_3\left(\|\bs f(t)-\bs f_\infty\|_{\bs L^{q'}(\Omega)}^{2}+\|\bs g(t)-\bs g_\infty\|_{\bs L^{r'}(\Gamma)}^{2}+
\|\Psi(t)-\Psi_\infty\|_{ L^{\infty}(\Omega)}\right)\\
+C_1t^\frac12\|\Psi(t)-\Psi_\infty\|_{ L^{\infty}(\Omega)}.
\end{multline*}

But
\begin{multline*}
\int_{t}^{t+1} l(\tau)\,d\tau\\
\leq
C_3 \int_{t}^{t+1}\left(\|\bs f(\tau)-\bs f_\infty\|_{\bs L^{q'}(\Omega)}^{2}+\|\bs g(\tau)-\bs g_\infty\|_{\bs L^{r'}(\Gamma)}^{2}+
\|\Psi(\tau)-\Psi_\infty\|_{ L^{\infty}(\Omega)}\right)\,d\tau\\
\Tende{+C_1\displaystyle\int_{t}^{t+1} \tau^\frac12\|\Psi(\tau)-\Psi_\infty\|_{ L^{\infty}(\Omega)}\,d\tau}{t\rightarrow\infty}{0,}
\end{multline*}
because $\gamma>\frac12$.

Arguing, in both cases, exactly as in the Section~\ref{ca-equacao}, the conclusion follows.
\end{proof}


\begin{thebibliography}{20}
\providecommand{\natexlab}[1]{#1}
\providecommand{\url}[1]{\texttt{#1}}
\expandafter\ifx\csname urlstyle\endcsname\relax
  \providecommand{\doi}[1]{doi: #1}\else
  \providecommand{\doi}{doi: \begingroup \urlstyle{rm}\Url}\fi

\bibitem[Amrouche and Seloula(2010)]{AmroucheSeloula2010}
C.~Amrouche and N.~Seloula.
\newblock {$L^p$}-theory for vector potentials and {S}obolev's inequalities for
  vector fields.
\newblock \emph{To appear}.

\bibitem[Berm{\'u}dez et~al.(2005)Berm{\'u}dez, Mu{\~n}oz-Sola, and
  Pena]{BermudezMunoz-SolaPena2005}
A.~Berm{\'u}dez, R.~Mu{\~n}oz-Sola, and F.~Pena.
\newblock A nonlinear partial differential system arising in thermoelectricity.
\newblock \emph{European J. Appl. Math.}, 16\penalty0 (6):\penalty0 683--712,
  2005.

\bibitem[Bossavit(1998)]{Bossavit1998}
A.~Bossavit.
\newblock \emph{Computational electromagnetism}.
\newblock Electromagnetism. Academic Press Inc., San Diego, CA, 1998.

\bibitem[Dautray and Lions(1990)]{DautrayLions1990}
R.~Dautray and J.-L. Lions.
\newblock \emph{Mathematical analysis and numerical methods for science and
  technology. {V}ol. 3}.
\newblock Springer-Verlag, Berlin, 1990.

\bibitem[Haraux(1981)]{Haraux1981}
A.~Haraux.
\newblock \emph{Nonlinear evolution equations---global behavior of solutions},
  volume 841 of \emph{Lecture Notes in Mathematics}.
\newblock Springer-Verlag, Berlin, 1981.

\bibitem[Landau and Lifshitz(1960)]{LandauLifshitz1960}
L.~D. Landau and E.~Lifshitz.
\newblock \emph{Electrodynamics of continuous media}.
\newblock Course of Theoretical Physics, Vol. 8. Translated from the Russian by
  J. B. Sykes and J. S. Bell. Pergamon Press, Oxford, 1960.

\bibitem[Lions(1969)]{Lions1969}
J.-L. Lions.
\newblock \emph{Quelques m\'ethodes de r\'esolution des probl\`emes aux limites
  non lin\'eaires}.
\newblock Dunod, 1969.

\bibitem[Miranda et~al.(2009)Miranda, Rodrigues, and
  Santos]{MirandaRodriguesSantos2009}
F.~Miranda, J.-F.~Rodrigues, and L.~Santos.
\newblock A class of stationary nonlinear {M}axwell systems.
\newblock \emph{Math. Models Methods Appl. Sci.}, 19\penalty0 (10):\penalty0
  1883--1905, 2009.

\bibitem[Mitrea et~al.(1997)Mitrea, Mitrea, and Pipher]{MitreaMitreaPipher1997}
D.~Mitrea, M.~Mitrea, and J.~Pipher.
\newblock Vector potential theory on nonsmooth domains in {${\bf R}\sp 3$} and
  applications to electromagnetic scattering.
\newblock \emph{J. Fourier Anal. Appl.}, 3\penalty0 (2):\penalty0 131--192,
  1997.

\bibitem[Mitrea(2002)]{Mitrea2002}
M.~Mitrea.
\newblock Boundary value problems for {D}irac operators and {M}axwell's
  equations in non-smooth domains.
\newblock \emph{Math. Methods Appl. Sci.}, 25\penalty0 (16-18):\penalty0
  1355--1369, 2002.

\bibitem[Prigozhin(1996)]{Prigozhin1996}
L.~Prigozhin.
\newblock On the {B}ean critical-state model in superconductivity.
\newblock \emph{European J. Appl. Math.}, 7\penalty0 (3):\penalty0 237--247,
  1996.

\bibitem[Santos(1991)]{Santos1991}
L.~Santos.
\newblock A diffusion problem with gradient constraint and evolutive
  {D}irichlet condition.
\newblock \emph{Portugal. Math.}, 48\penalty0 (4):\penalty0 441--468, 1991.

\bibitem[Santos(2002)]{Santos2002}
L.~Santos.
\newblock Variational problems with non-constant gradient constraints.
\newblock \emph{Port. Math. (N.S.)}, 59\penalty0 (2):\penalty0 205--248, 2002.

\bibitem[Simader and Sohr(1992)]{SimaderSohr1992}
C.~Simader and H.~Sohr.
\newblock A new approach to the {H}elmholtz decomposition and the {N}eumann
  problem in {$L\sp q$}-spaces for bounded and exterior domains.
\newblock In \emph{Mathematical problems relating to the Navier-Stokes
  equation}, volume~11 of \emph{Ser. Adv. Math. Appl. Sci.}, pages 1--35. World
  Sci. Publ., River Edge, NJ, 1992.

\bibitem[Simon(1975)]{Simon1975}
J.~Simon.
\newblock Quelques propri\'et\'es de solutions d'\'equations et d'in\'equations
  d'\'evolution paraboliques non lin\'eaires.
\newblock \emph{Ann. Scuola Norm. Sup. Pisa Cl. Sci. (4)}, 2\penalty0
  (4):\penalty0 585--609, 1975.

\bibitem[Simon(1987)]{Simon1987}
J.~Simon.
\newblock Compact sets in the space {$L\sp p(0,T;B)$}.
\newblock \emph{Ann. Mat. Pura Appl. (4)}, 146:\penalty0 65--96, 1987.

\bibitem[Sobolev(1963)]{Sobolev1963}
S.~Sobolev.
\newblock \emph{Applications of functional analysis in mathematical physics}.
\newblock Translated from the Russian by F. E. Browder. Translations of
  Mathematical Monographs, Vol. 7. American Mathematical Society, Providence,
  R.I., 1963.

\bibitem[Yin(2004)]{Yin2004-2}
H.-M.~Yin.
\newblock On a nonlinear {M}axwell's system in quasi-stationary electromagnetic
  fields.
\newblock \emph{Math. Models Methods Appl. Sci.}, 14\penalty0 (10):\penalty0
  1521--1539, 2004.

\bibitem[Yin et~al.(2002)Yin, Li, and Zou]{YinLiBenZou2002}
H.-M.~Yin, B.~Q. Li, and J.~Zou.
\newblock A degenerate evolution system modeling {B}ean's critical-state
  type-{II} superconductors.
\newblock \emph{Discrete Contin. Dyn. Syst.}, 8\penalty0 (3):\penalty0
  781--794, 2002.

\bibitem[Zheng(2004)]{Zheng2004}
S.~Zheng.
\newblock \emph{Nonlinear evolution equations}, volume 133 of \emph{Chapman \&
  Hall/CRC Monographs and Surveys in Pure and Applied Mathematics}.
\newblock Chapman \& Hall/CRC, Boca Raton, FL, 2004.

\end{thebibliography}
\end{document}